\newtheorem{thm}{Theorem}[section]
\newtheorem{prop}[thm]{Proposition}
\newtheorem{lem}[thm]{Lemma}
\newtheorem{defn}[thm]{Definition}
\newtheorem{rem}[thm]{Remark}
\newtheorem{con}[thm]{Conjecture}
\numberwithin{equation}{section}
\renewcommand\bigskip{\medskip}
\def\to{\rightarrow}
\def\N{\mathbb N}
\DeclareMathOperator{\diam}{diam}
\begin{document}

\title[Orbit complexity intermediate value Furstenberg problem]
{On orbit complexity of dynamical systems: intermediate value property and level set related to a Furstenberg problem}

\thanks{{\it 2020 Mathematics Subject Classification: 28A80, 37C45}}
\thanks{{\it Key words: Intermediate value property, orbit complexity, Furstenberg level set, specification}}

\author{}

\author{}
\address{}
\email{}

\author{Yuanyang Chang,  Bing Li, and Meng Wu}
\date{}

\begin{abstract}
  For symbolic dynamics with some mild conditions, we solve the lowering topological entropy problem for subsystems and determine the Hausdorff dimension of the level set with given complexity, where the complexity is represented by Hausdorff dimension of orbit closure. These results can be applied to some dynamical systems such as $\beta$-transformations, conformal expanding repeller, etc. We also determine the dimension of the Furstenberg level set, which is related to a problem of Furstenberg on the orbits under two multiplicatively independent maps.
\end{abstract}

\maketitle

\section{Introduction}
\subsection{Orbit complexity: symbolic setting}
Let $(X, T)$ be a \emph{dynamical system}, where $X$ is a compact metric space and $T: X\rightarrow X$ is a continuous map. Topological entropy is a quantity to characterize the complexity of a dynamical system, see \cite{Bow73} for the definition of topological entropy for non-compact subsets. In \cite{SW91}, Shub and Weiss posed the \emph{lowering topological entropy problem}: for a dynamical system with topological entropy $h_{top}(T, X)>0$, and for any $h\in [0,h_{top}(T, X)]$, can we always find a factor $(Y, S)$ with entropy $h$? Here a dynamical system $(Y, S)$ is said to be a factor of $(X, T)$, if there exists a continuous surjection $\pi: X\rightarrow Y$ satisfying $S\circ\pi=\pi\circ T$. Lindenstrauss \cite{Lin95} gave an affirmative answer to this problem when the space $X$ is of finite dimension; while for infinite dimensional dynamical systems, he provided a negative answer by constructing a minimal system. Motivated by this and the intermediate value property of Hausdorff dimension of Borel subsets, Huang et al \cite{HYZ10, HYZ14} studied a similar question, called the \emph{lowering topological entropy problem for subsets}, that is, for any $h\in [0,h_{top}(T, X)]$, whether there exists a non-empty compact subset $Y\subset X$ with entropy $h$? Note that here the subset $Y$ is not necesssarily invariant, and they showed this is the case for any dynamical systems with finite topological entropy.

In this paper, we are interested in a related but different problem. Rather than finding factors or non-empty compact subsets of intermediate entropy, we are concerned with the problem whether there exists a subsystem with any prescribed intermediate entropy, namely, \emph{lowering topological entropy problem for subsystems}. Here, by a \emph{subsystem} we mean a pair $(Y, T)$, where $Y$ is a non-empty closed invariant subset; and $Y$ is called \emph{minimal} if it does not contain any non-empty closed invariant subsets. The subsystem $(Y, T)$ is called \emph{minimal} if $Y$ is minimal. Since a subsystem $(Y, T)$ is a factor itself, thus finding a subsystem with prescribed intermediate entropy is more restrictive than finding a factor or a non-empty compact subset. To rule out some trivial cases, we assume that $(X, T)$ is neither  minimal nor uniquely ergodic, where in both cases no proper subsystem exists.
Our first main result confirms the existence of subsystems with prescribed intermediate entropy for symbolic dynamics which satisfy a weak form of specification property and certain entropy gap condition. The symbolic dynamics we consider include the $\beta$-shifts, S-gap shifts and more general coded systems. See Section 2 for more details about the notations and definitions. Throughout, we use $\mathcal{O}_{T}(x)$ to denote the orbit of $x\in X$ under $T$ and $\overline{\mathcal{O}_{T}(x)}$ to denote the orbit closure. For a collection
$\mathcal{D}$ of words, use $h_{top}(\mathcal{D})$ to denote the exponential rate of growth of the number of words of length $n$ in $\mathcal{D}$ (see Section 2.2 for more details).

\begin{thm}\label{int-entropy}
  Let $(\Sigma, \sigma)$ be a subshift over a finite alphabet with positive topological entropy. Suppose the language of $\Sigma$ admits the decomposition
  $\mathcal{L}=\mathcal{C}^p\mathcal{G} \mathcal{C}^s$ and the following conditions hold:
\begin{enumerate}
  \item $\mathcal{G}$ has the specification property;
  \item $\Sigma$ satisfies the entropy gap condition, i.e., $h_{top}(\mathcal{C}^p\cup\mathcal{C}^s)<h_{top}(\sigma, \Sigma)$.
\end{enumerate}
Then for any $h\in [0, h_{top}(\sigma, \Sigma)]$, there exists $w\in\Sigma$ with
$h_{top}(\sigma, \overline{\mathcal{O}_{\sigma}(w)})=h$. 
\end{thm}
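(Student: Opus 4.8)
The plan is to construct, for each target value $h \in [0, h_{top}(\sigma,\Sigma)]$, a single point $w \in \Sigma$ whose orbit closure has topological entropy exactly $h$. The key idea is that the orbit closure $\overline{\mathcal{O}_\sigma(w)}$ is a subshift whose language consists of all words appearing in $w$, so controlling $h_{top}(\sigma, \overline{\mathcal{O}_\sigma(w)})$ amounts to controlling the exponential growth rate of the number of distinct length-$n$ subwords of $w$ (its subword complexity). So the problem reduces to: build an infinite word $w$ with prescribed subword complexity growth rate $h$, while ensuring $w$ actually lies in $\Sigma$.

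**Main construction.**

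First I would handle the boundary cases: $h = 0$ is achieved by any point with a minimal (e.g. periodic, or Sturmian-type) orbit closure that is compatible with $\Sigma$ — the entropy gap and specification ensure $\Sigma$ is not minimal so such points exist; $h = h_{top}(\sigma,\Sigma)$ is achieved by a transitive point whose orbit is dense (this exists because the $\mathcal{G}$-part with specification yields a transitive subshift of full entropy). For intermediate $h \in (0, h_{top}(\sigma,\Sigma))$, the strategy is a concatenation scheme: by the entropy gap condition, fix a number $h' $ with $h_{top}(\mathcal{C}^p \cup \mathcal{C}^s) < h' \le h_{top}(\sigma,\Sigma)$ and note that for a suitable family of "good" words coming from $\mathcal{G}$, the specification property lets us freely concatenate them (with bounded gap words) and stay inside $\mathcal{L}$. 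One then selects, for a rapidly increasing sequence of scales $n_k$, a sub-collection $\mathcal{D}_k \subset \mathcal{G}$ of words of length roughly $n_k$ whose cardinality grows like $e^{h n_k}$ — possible since the full growth rate exceeds $h$ — and builds $w$ by concatenating, at stage $k$, all words of $\mathcal{D}_k$ (glued via specification). Choosing the block lengths $n_k$ to grow fast enough that the contribution of the "seams" and of earlier/later stages is negligible, a counting argument shows the subword complexity of $w$ has upper and lower exponential growth rate both equal to $h$, hence $h_{top}(\sigma, \overline{\mathcal{O}_\sigma(w)}) = h$.

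**Key steps in order.**

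\begin{enumerate}
  \item Reduce $h_{top}(\sigma, \overline{\mathcal{O}_\sigma(w)})$ to the subword complexity growth rate of $w$, and record the boundary cases $h=0$ and $h = h_{top}(\sigma,\Sigma)$.
  \item Use the entropy gap condition to isolate a large sub-collection of $\mathcal{G}$-words whose growth rate exceeds $h$ while the prefix/suffix collections contribute strictly less; extract, at each scale $n_k$, a set $\mathcal{D}_k$ of words of length $\approx n_k$ with $\#\mathcal{D}_k \approx e^{h n_k}$.
  \item Invoke the specification property for $\mathcal{G}$ to produce a fixed gap length $N$ such that any finite sequence of words from the $\mathcal{D}_k$'s can be concatenated with gap words of length $\le N$ into a word of $\mathcal{L}$; take $w$ to be the resulting infinite concatenation (with scales $n_k \to \infty$ sufficiently fast).
  \item Upper bound: any length-$m$ subword of $w$ is determined by a bounded number of (partial) $\mathcal{D}_k$-blocks plus seams; counting these gives growth rate $\le h + o(1)$.
  \item Lower bound: the length-$n_k$ subwords of $w$ include all of $\mathcal{D}_k$, forcing growth rate $\ge h - o(1)$ along the sequence $n_k$; a standard interpolation/monotonicity argument upgrades this to the full $\limsup$.
  \item Conclude equality, hence the theorem.
\end{enumerate}

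**Main obstacle.**

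The technical heart — and the step I expect to be most delicate — is step 4, the uniform upper bound on subword complexity. Because $w$ is an infinite concatenation of blocks of wildly different scales, a window of length $m$ can straddle the boundary between a stage-$k$ block and a stage-$(k+1)$ block, and it must be checked that the number of such "mixed" words of length $m$ does not exceed $e^{(h+\varepsilon)m}$. This forces a careful choice of the growth rate of $n_k$ (super-exponential, so that each stage dwarfs all previous stages combined) and a bookkeeping argument splitting subwords according to which block(s) they meet; the prefix/suffix words contributed by the decomposition $\mathcal{L} = \mathcal{C}^p \mathcal{G} \mathcal{C}^s$ and the bounded gap words from specification must all be absorbed into the error term, which is exactly where the entropy gap hypothesis (1)–(2) is used quantitatively. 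A secondary subtlety is ensuring the constructed $w$ is genuinely admissible in $\Sigma$ (not merely in the language of concatenable $\mathcal{G}$-blocks), which is where one must be careful that the decomposition $\mathcal{L} = \mathcal{C}^p\mathcal{G}\mathcal{C}^s$ and specification interact correctly at every seam.
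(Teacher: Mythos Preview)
Your overall strategy—reduce to subword complexity, select collections of $\mathcal{G}$-words with controlled cardinality, concatenate via specification—matches the paper's, but there is a genuine gap in step~4 that your construction as written cannot close.

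The problem is not the one you identified (windows straddling the boundary between stage~$k$ and stage~$k+1$); it is windows of length $m$ with $n_{k-1} \le m \ll n_k$ lying \emph{entirely inside a single stage-$k$ block}. Your $\mathcal{D}_k$ is chosen as an arbitrary subcollection of $\mathcal{G}_{n_k}$ of size $\approx e^{hn_k}$, so its elements may contain as subwords essentially \emph{all} of $\mathcal{L}_m$. A single word in $\mathcal{D}_k$ of length $n_k$ can already carry $\approx e^{h_{top}(\sigma,\Sigma)\,m}$ distinct length-$m$ subwords (e.g.\ take it to be a prefix of a transitive point), so $p_w(m)$ can be of order $e^{h_{top}(\sigma,\Sigma)\,m}$ rather than $e^{hm}$. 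No choice of super-exponential scale growth $n_k$ rescues this, because the damage occurs within a single block, not across seams. Your statement in step~4 that ``a length-$m$ subword is determined by a bounded number of (partial) $\mathcal{D}_k$-blocks'' is correct but does not bound the number of such subwords: once $m < n_k$ the subword meets one block but there are $\approx n_k$ positions within it, and nothing ties the resulting words to the cardinality $e^{hn_k}$.

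The fix is exactly what the paper does (following Grillenberger): make the construction \emph{recursive}, so that each word in $\mathcal{E}_{k+1}$ is a concatenation (via specification) of a permutation of the $n_k$ words in $\mathcal{E}_k$. Then a length-$m$ subword with $l_k \le m < l_{k+1}$ is determined by at most $n_k+1$ consecutive $\mathcal{E}_k$-blocks plus the gluing words, giving $p_w(l_{k+1}) \lesssim l_{k+1}\, n_k^{n_k+1}\, m^{O(n_k)}$, which after taking logarithms and dividing by $l_{k+1} \approx l_k n_k$ yields the bound $(\log n_k)/l_k \to h$. The nesting is precisely what propagates the entropy control to all intermediate scales; your flat (non-recursive) choice of $\mathcal{D}_k$ loses this.
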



The proof of Theorem \ref{int-entropy} is constructive, indeed, we can construct a bunch of points with the intermediate entropy property. Since $h_{top}(\sigma, \overline{\mathcal{O}_{\sigma}(w)})$ describes the complexity of the subsystem $(\overline{\mathcal{O}_{\sigma}(w)}, \sigma)$, Theorem \ref{int-entropy} indicates the existence of orbits with any prescribed intermediate complexity.
Then the following question arises naturally: how large is the set of points in $\Sigma$ with the prescribed orbit complexity? To give a better understanding of this question, we need to use Hausdorff dimension, a prevalent notion in fractal geometry, see Section \ref{Hausdorff} for more details on the definition. By the well known Ledrappier-Young formula, if the Hausdorff dimension is defined with respect to the metric
\begin{equation}\label{usualmetric}
d(w, w')=m^{-\min\{i\geq 0: w_{i+1}\neq w'_{i+1}\}}, \forall w, w'\in\Sigma,
\end{equation}
then $\dim_{\rm H} Y=\frac{h_{top}(T, Y)}{\log m}$ holds for any closed invariant subset $Y$, see also \cite[Proposition \uppercase\expandafter{\romannumeral3}. 1]{Fur67}. The above question on diversity of orbit complexity can be rephrased as: \emph{for any $\alpha\in [0, \dim_{\rm H} \Sigma]$, what is the Hausdorff dimension of the level set}
\begin{equation*}
  E_{\sigma}(\alpha):=\left\{w\in \Sigma: \dim_{\rm H} \overline{\mathcal{O}_{\sigma}(w)}=\alpha\right\}?
\end{equation*}

Our second theorem answers the the above question by determining the Hausdorff dimension of
$E_{\sigma}(\alpha)$. We remark that the Hausdorff dimension is defined via a proper metric more general than that in \eqref{usualmetric} (see Section \ref{setting} for more details on notations), which allows us to apply the theorem to some concrete dynamical systems. We shall present these applications in Section 5.
Let $(\Sigma, \sigma)$ be a subshift and $\varphi$ be a strictly positive continuous function on $\Sigma$. We use $\text{Var}_n\varphi$, $d_{\varphi}$, $\dim^\varphi_{\rm H}$ and $Z_n(\sigma, \mathcal{C}^p\cup\mathcal{C}^s, -\gamma\varphi)$ to denote the $n$-variation of $\varphi$, the metric induced by $\varphi$, the
$\varphi$-Hausdorff dimension and the quantity defined in \eqref{Z-n} respectively, and $\gamma=\dim^\varphi_{\rm H} \Sigma$.

\begin{thm}\label{mainthm}
Let $(\Sigma, \sigma)$ be a subshift over a finite alphabet and
$\varphi$ be a strictly positive continuous function on $\Sigma$ satisfying $\sum_{n\geq 1}\text{Var}_n\varphi<\infty$. Equip $\Sigma$ with the metric $d_{\varphi}$ and write $\gamma=\dim^\varphi_{\rm H} \Sigma$. Write
\begin{equation*}
  E_{\sigma}^{\varphi}(\alpha):=\left\{w\in \Sigma: \dim^\varphi_{\rm H} \overline{\mathcal{O}_{\sigma}(w)}=\alpha\right\}
\end{equation*}
Suppose that there exists a decomposition $\mathcal{L}=\mathcal{C}^p\mathcal{G} \mathcal{C}^s$ satisfying:
    \begin{enumerate}
    \item $\mathcal{G}$ has the specification property;
    \item $\Sigma$ satisfies the pressure gap condition, i.e., $\sum_{n\geq 1} Z_n(\sigma, \mathcal{C}^p\cup\mathcal{C}^s, -\gamma\varphi)<\infty$.
\end{enumerate}
Then for any $\alpha\in[0, \gamma]$, we have
\begin{equation}\label{orbitdim}
  \dim^\varphi_{\rm H} E_{\sigma}^{\varphi}(\alpha)=\alpha.
\end{equation}
\end{thm}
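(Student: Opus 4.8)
The plan is to prove the two inequalities $\dim^\varphi_{\rm H} E_\sigma^\varphi(\alpha) \le \alpha$ and $\dim^\varphi_{\rm H} E_\sigma^\varphi(\alpha) \ge \alpha$ separately, the first being soft and the second being the substantial construction. For the upper bound, observe that $E_\sigma^\varphi(\alpha) \subseteq \{w \in \Sigma : \dim^\varphi_{\rm H} \overline{\mathcal{O}_\sigma(w)} \le \alpha\}$, and I would cover this latter set efficiently: a point $w$ whose orbit closure has $\varphi$-dimension at most $\alpha$ has, in particular, its own orbit closure coverable by cylinders whose $d_\varphi$-diameters raised to the power $\alpha + \epsilon$ sum to something controlled, and since $w \in \overline{\mathcal{O}_\sigma(w)}$ the point itself is caught in such a cover. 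Making this quantitative (the orbit closure is compact, so finitely many cylinders suffice at each scale) gives $\dim^\varphi_{\rm H} E_\sigma^\varphi(\alpha) \le \alpha$. One must be slightly careful that $\dim_{\rm H}^\varphi$ of the orbit closure equals the corresponding pressure-theoretic quantity (Bowen's equation / Ledrappier–Young in this generalized metric), which is where $\sum_n \mathrm{Var}_n\varphi < \infty$ enters to guarantee the metric $d_\varphi$ is well-behaved and bi-Lipschitz-comparable across cylinders of the same length.

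For the lower bound, which is the heart of the matter, the plan is to build, for each $\alpha \in [0,\gamma]$, a compact subset $F_\alpha \subseteq E_\sigma^\varphi(\alpha)$ with $\dim^\varphi_{\rm H} F_\alpha \ge \alpha$, by a Moran-type / concatenation construction patterned on the constructive proof of Theorem 1.1. The idea is to choose a sequence of "generations": in most generations we concatenate words drawn from a large subset of $\mathcal{G}$ of $\varphi$-pressure-dimension close to $\alpha$ (this is possible for every target $\alpha \le \gamma$ by a standard application of the variational principle, choosing an ergodic measure of the appropriate dimension and using the specification property of $\mathcal{G}$ to glue its typical words freely), so that the constructed Cantor set has local $\varphi$-dimension $\alpha$ and, crucially, the orbit closure of every constructed point has $\varphi$-dimension exactly $\alpha$. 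To pin the dimension of the orbit closure from above by $\alpha$, I would interleave long blocks where the word is forced to be highly repetitive (built from a single periodic-like pattern of sub-exponential complexity), arranged so sparsely that they do not lower the Hausdorff dimension of the Cantor set below $\alpha$, yet frequently enough that any subword seen infinitely often along the orbit is "seen" inside a low-complexity stretch — forcing $h_{top}(\sigma, \overline{\mathcal{O}_\sigma(w)}) \le \alpha \log$-rate, i.e. $\dim^\varphi_{\rm H}\overline{\mathcal{O}_\sigma(w)} \le \alpha$. The gluing words from $\mathcal{C}^p, \mathcal{C}^s$ needed to legally concatenate elements of $\mathcal{G}$ are absorbed without cost precisely because of the pressure gap condition $\sum_n Z_n(\sigma, \mathcal{C}^p \cup \mathcal{C}^s, -\gamma\varphi) < \infty$, which ensures their contribution to pressure is negligible.

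The lower bound on $\dim^\varphi_{\rm H} F_\alpha$ itself I would establish via the standard mass-distribution principle: put a probability measure $\mu$ on $F_\alpha$ by distributing mass uniformly (according to the chosen near-dimension-$\alpha$ measure on $\mathcal{G}$-words) among the cylinders surviving to each generation, then estimate $\mu([w_1\cdots w_n])$ against $(\mathrm{diam}\, [w_1 \cdots w_n])^{\alpha - \epsilon}$ using the hypothesis $\sum_n \mathrm{Var}_n \varphi < \infty$ to control $\mathrm{diam}_\varphi$ of a cylinder of length $n$ as essentially $\exp(-S_n\varphi)$ up to a uniform constant, and using the entropy/pressure bookkeeping to control $\mu$. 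Billingsley's lemma then yields $\dim^\varphi_{\rm H} F_\alpha \ge \alpha - \epsilon$ for all $\epsilon$, hence $\ge \alpha$.

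I expect the main obstacle to be the simultaneous control of two competing quantities on the \emph{same} points: the local dimension of the Cantor measure (which we want $\ge \alpha$) and the complexity of the orbit closure (which we want $\le \alpha$). Making the "sparse low-complexity blocks" frequent enough to cap the orbit-closure dimension while sparse enough not to damage the mass-distribution estimate is a delicate balancing of growth rates — essentially one must choose the generation lengths $n_k$ growing fast enough (super-exponentially) that the low-complexity interruptions occupy a vanishing proportion of each long prefix in the dimension estimate, yet ensure every finite pattern appearing in the limit word recurs inside an interruption, so the orbit closure embeds into a subshift of dimension $\le \alpha$. The edge cases $\alpha = 0$ (orbit closure should be, e.g., a single minimal/periodic orbit, requiring the system to be non-minimal and non-uniquely-ergodic to even exist — though here we only need a point whose orbit closure has zero dimension, which the construction supplies by taking all blocks low-complexity) and $\alpha = \gamma$ (take $w$ generic for a measure of maximal $\varphi$-dimension, whose orbit closure is all of a subsystem of full dimension) should be handled as degenerate instances of, or separately from, the general construction.
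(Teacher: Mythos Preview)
Your upper bound argument has a genuine gap. The observation that each $w \in E_\sigma^\varphi(\alpha)$ lies in its own orbit closure $\overline{\mathcal{O}_\sigma(w)}$, which admits efficient covers at the exponent $\alpha+\epsilon$, does \emph{not} yield a cover of $E_\sigma^\varphi(\alpha)$ itself. The cover you obtain for $\overline{\mathcal{O}_\sigma(w)}$ depends on $w$, and there are uncountably many such $w$; taking the union of all these covers gives an uncountable family with no control on the total $(\alpha+\epsilon)$-sum. The fallacy is the same as arguing that $[0,1]$ has Hausdorff dimension $0$ because each point lies in the zero-dimensional set $\{x\}$. Some uniform mechanism is needed to pass from ``each orbit closure has dimension $\le\alpha$'' to ``the set of such points has dimension $\le\alpha$'', and your sketch does not supply one.

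The paper's route for the upper bound is genuinely different and uses a nontrivial input: it recodes $\Sigma$ over a coarse alphabet $\Lambda_\rho$ of words at scale $\rho$, shows the induced metric is bi-Lipschitz equivalent to $d_\varphi$, and then applies Bowen's lemma on topological entropy for non-compact subsets to the recoded shift $\sigma_\rho$. Bowen's lemma bounds $h_{top}$ of the set $\{w : \exists\,\mu\in V_{\sigma_\rho}(w)\text{ with }h(\mu,\sigma_\rho)\le t\}$ by $t$; combined with the Ledrappier--Young type inequalities relating $\dim_{\rm H}^\varphi$ of an orbit closure to topological entropy, and letting $\rho\to 0$, this yields $\dim_{\rm H}^\varphi E_\sigma^\varphi(\alpha)\le\alpha$. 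Bowen's lemma is precisely the uniformization device your argument lacks.

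For the lower bound, your Moran-type construction with interleaved low-complexity blocks is a plausible strategy, but it is more intricate than what the paper does. The paper instead invokes Theorem~1.3: for each $\alpha$ there is a closed $\sigma$-invariant set $X$ carrying an ergodic measure $\nu$ with $\dim_{\rm H}^\varphi X=\dim_{\rm H}^\varphi\nu=\alpha$. Then $\nu$-a.e.\ point is generic for $\nu$, hence has dense orbit in $\mathrm{supp}(\nu)$, so its orbit closure has $\varphi$-dimension exactly $\alpha$; the set of generic points has full $\nu$-measure and therefore $\varphi$-dimension $\ge\alpha$. This sidesteps entirely the delicate balancing act you anticipate (capping orbit-closure complexity while preserving the mass-distribution estimate), at the cost of proving Theorem~1.3, which is itself a limiting construction of nested invariant subsystems rather than a single Cantor set of points.
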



\begin{rem}
   Let $(X, T)$ be a diffeomorphism on a compact Riemannian manifold. Denote the set of $T$-invariant and ergodic measure by $\mathcal{M}_{e}(X, T)$. Katok conjectured that $(X, T)$ has the intermediate entropy property, that is, for every $\alpha\in [0, h_{top}(T, X))$, there exist $\mu\in\mathcal{M}_{e}(X, T)$ such that the measure-theoretic entropy $h(\mu, T)$ equals to $\alpha$. See \cite{Sun21} and references therein for partial results on this conjecture. In the proof of Theorem \ref{mainthm}, we verified this conjecture for a subshift $(\Sigma, \sigma)$ satisfying our assumptions. In fact, we proved even more: for $\mu\in\mathcal{M}_{e}(X, T)$ with $h(\mu, T)=\alpha$, and for almost every point $w\in \text{supp}(\mu)$, the orbit closure of $w$ has the same Hausdorff dimension with $\mu$.
\end{rem}

For the proof of Theorem \ref{mainthm}, we need the following result which has its  own interest.
\begin{thm}\label{intermediate-dimension}
 Let $\Sigma$ and $\varphi$ be defined as in Theorem \ref{mainthm}. Equip $\Sigma$ with the metric $d_{\varphi}$ and write $\gamma=\dim^\varphi_{\rm H} \Sigma$.
 For any $\alpha\in [0, \gamma]$, there exists a closed invariant subset $X$ with a $\sigma$-invariant measure $\nu$ supported on it and satisfying $\dim_{\rm H}^{\varphi}X=\dim_{\rm H}^{\varphi}\nu=\alpha$.
\end{thm}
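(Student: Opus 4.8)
The plan is to construct, for each $\alpha\in[0,\gamma]$, a closed invariant subset $X$ together with an invariant measure $\nu$ on it whose $\varphi$-dimension equals $\alpha$, by building a suitable (generalized) Moran-type subsystem supported on words glued from the specification set $\mathcal{G}$. The guiding principle is that, under the usual metric, $\dim_{\rm H}^\varphi$ of an invariant set equals $h_{top}/\lambda$ where $\lambda$ is the Lyapunov-type exponent coming from $\varphi$ (in the constant case this is just $h/\log m$), so it suffices to produce subsystems realizing every entropy (or pressure) value in the admissible range and then invoke a variational-principle/Ledrappier-Young argument to pin down the dimension and exhibit a measure of full dimension.

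First I would handle the two endpoints separately: $\alpha=0$ is given by any fixed point or periodic orbit (which exists since $\mathcal{G}$ has specification, so $\Sigma$ is not minimal/uniquely ergodic in a trivial way), carrying the atomic invariant measure of zero dimension; and $\alpha=\gamma$ is given by $X=\Sigma$ itself together with a measure of maximal dimension, whose existence follows from upper semicontinuity of $\mu\mapsto\dim_{\rm H}^\varphi\mu$ on the (nonempty, compact) space of invariant measures — here the pressure gap condition (2) is what guarantees $\gamma$ is actually attained and governed by the ``good'' part $\mathcal{G}$ rather than the negligible $\mathcal{C}^p\cup\mathcal{C}^s$. For the intermediate range $0<\alpha<\gamma$, I would fix a sequence of finite subsets $\mathcal{A}_k\subset\mathcal{G}$ of words of a common large length $n_k$ with prescribed cardinalities, chosen so that the exponential growth rate $\frac{1}{n_k}\log\#\mathcal{A}_k$ (weighted appropriately by $\varphi$-Birkhoff sums, i.e. by the pressure functional $P(-s\varphi)$) converges to the target value determined by $\alpha$; using the specification property of $\mathcal{G}$, concatenations of words from $\bigcup_k\mathcal{A}_k$ (with bounded gap words inserted) form a closed invariant set $X$, and the Bernoulli-type measure $\nu$ putting equal mass on each generator gives $\dim_{\rm H}^\varphi\nu=\alpha$ by a direct computation of the local dimension / Shannon–McMillan–Breiman estimate, while a covering argument shows $\dim_{\rm H}^\varphi X\le\alpha$, hence equality.

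The main obstacle I anticipate is twofold. The first is purely technical but unavoidable: because $\varphi$ is only assumed continuous with summable variation (not locally constant), the $\varphi$-dimension is not simply $h/\log m$, so I must work with the pressure function $s\mapsto P(\sigma,-s\varphi)$ and define $\gamma$ as its zero, and then control Birkhoff sums $S_n\varphi$ along the concatenated words — this is where $\sum_n\mathrm{Var}_n\varphi<\infty$ enters, giving the bounded-distortion estimate $|S_n\varphi(w)-S_n\varphi(w')|\le\sum_m\mathrm{Var}_m\varphi$ whenever $w,w'$ agree on the first $n$ coordinates, so that the measure of a cylinder is comparable to $e^{-S_n\varphi}$ raised to the appropriate power. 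The second, more conceptual difficulty is ensuring that the ``glue'' words and the prefix/suffix collections $\mathcal{C}^p,\mathcal{C}^s$ needed to realize elements of $\Sigma$ do not inflate the dimension of $X$ beyond $\alpha$ or prevent $X$ from being a genuine subsystem of $\Sigma$; the pressure gap condition (2), which forces $\sum_n Z_n(\sigma,\mathcal{C}^p\cup\mathcal{C}^s,-\gamma\varphi)<\infty$, is precisely the hypothesis that makes these boundary contributions negligible in the dimension count, and the bulk of the careful work will be in quantifying this — essentially showing the gap words contribute zero exponential growth and their $\varphi$-lengths are controlled, so the computed dimension of $\nu$ (a lower bound for $\dim_{\rm H}^\varphi X$) matches the covering upper bound.
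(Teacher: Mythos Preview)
Your overall strategy---select finite subcollections of $\mathcal{G}$ with controlled cardinality, glue via specification, and put a Bernoulli-type measure on the result---matches the paper's first step, but there is a genuine gap: a single such construction can only produce a subsystem of dimension \emph{approximately} $\alpha$, not exactly $\alpha$. Concretely, if you pick $\mathcal{A}\subset\mathcal{G}$ with $\frac{1}{n}\log\sharp\mathcal{A}$ close to $\alpha$ and form $X'=(\mathcal{A}\Lambda^\tau)^{\mathbb N}$, the overhead from the $\tau$-glue words, the fact that $\sharp\mathcal{A}$ is an integer, and the variation of $S_n\varphi$ over words in $\mathcal{A}$ all introduce errors of order $O(1/n)$; so you only get $\dim_{\rm H}^\varphi X,\ \dim_{\rm H}^\varphi\nu\in(\alpha-\epsilon,\alpha+\epsilon)$. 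Your suggestion to use a sequence $\mathcal{A}_k$ with $\frac{1}{n_k}\log\sharp\mathcal{A}_k\to\alpha$ does not fix this: if $X$ consists of \emph{all} concatenations from $\bigcup_k\mathcal{A}_k$ then $X$ is invariant but its dimension is governed by the worst (largest) growth rate, while a Moran-type construction using $\mathcal{A}_k$ only at level $k$ hits $\alpha$ exactly but is \emph{not} shift-invariant.

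The paper resolves this by an iterative nested construction: Step~1 yields $X_1$ and an ergodic $\nu_1$ supported on $X_1\cap\Sigma$ with dimension in $(\alpha+\tfrac{\epsilon_1}{4},\alpha+2\epsilon_1)$; one then repeats \emph{inside} $X_1$ (choosing $A_2$ from words already built out of $A_1\Lambda^\tau$) with a smaller $\epsilon_2$, obtaining $X_2\subset X_1$, and so on. The limit $\nu_\infty=\lim_n\nu_n$ is supported on $X_\infty=\bigcap_nX_n$, and upper semicontinuity of entropy together with continuity of $\mu\mapsto\int\varphi\,d\mu$ gives $h(\nu_\infty)/\int\varphi\,d\nu_\infty\ge\alpha$, while $\dim_{\rm H}^\varphi X_\infty\le\alpha$ forces equality for $\nu_\infty$ and its typical ergodic components. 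This nesting-and-limit step is the idea your proposal is missing.

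Two further points. First, the set $(\mathcal{A}\Lambda^\tau)^{\mathbb N}$ is in general \emph{not} contained in $\Sigma$, since specification only guarantees that \emph{some} glue word in $\mathcal{L}_\tau$ works, not that every word in $\Lambda^\tau$ does; the paper circumvents this by building $\nu_1$ as a weak-$*$ limit of empirical measures along the orbit of a single carefully constructed point $\omega\in\Sigma$, so that $\nu_1$ is automatically supported on $\Sigma$. Second, the pressure gap condition is not used where you expect (controlling glue words in the covering of $X$) but rather earlier: it enters through an analogue of Lemma~\ref{lemCG} showing that $\gamma$ equals the exponential growth rate of $\mathcal{G}$-words at each $\varphi$-scale, which is what guarantees there are enough words in $\mathcal{G}$ to choose $A_1$ with growth rate \emph{above} $\alpha$ in the first place.
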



\subsection{Orbit complexity: single dynamical system}
In this subsection, we shall apply Theorem \ref{mainthm} to $\beta$-transformations and conformal expanding repellers. There may be more applications, but we will restrict ourselves to these two well known examples.
\subsubsection{$\beta$-transformations}
Let $\beta>1$ be a real number, the $\beta$-transformation $T_\beta: [0, 1)\to [0, 1)$ is defined by $T_\beta(x)=\beta x\; (\text{mod}\ 1)$ for $x\in [0, 1)$. It is well known that $\beta$-transformation is a typical example of expanding non-finite Markov systems whose properties are reflected by the greedy $\beta$-expansion of 1, and there exists a unique invariant measure called Parry measure which is equivalent to the Lebesgue measure (see the pioneer work \cite{Par60, Ren57} for more details).

The corresponding symbolic space $\Sigma_\beta$, taking closure of the set of $\beta$-expansions of real numbers in $[0, 1)$, is a subshift of $(\mathcal{A}_\beta^\mathbb{N},\sigma)$, where $\mathcal{A}_\beta=\{0,1,\dots,\beta-1\}$ for $\beta\in\mathbb{N}$ and  $\mathcal{A}_\beta=\{0,1,\dots,\lfloor\beta\rfloor\}$ for $\beta\notin\mathbb{N}$. Generally, the subshift $(\Sigma_\beta, \sigma)$ may not satisfy the specification property. Schmeling \cite{Schmeling1997} proved that the set of $\beta>1$ with $\Sigma_\beta$ having specification property is of zero Lebesgue measure and full Hausdorff dimension. We will see that the corresponding symbolic dynamics $(\Sigma_\beta, \sigma)$ satisfies the weak specification property described in Theorem \ref{mainthm} (see Proposition \ref{beta-spec}).

By Birkhoff's ergodic theorem, for Lebesgue almost every $x\in [0, 1)$, the orbit $\mathcal{O}_{T_{\beta}}(x)$ is dense, thus the orbit closure has Hausdorff dimension one. For each $\alpha\in [0, 1]$, write
$$E_{T_\beta}(\alpha)=\big\{x\in [0, 1): \dim_{\rm H} \overline{\mathcal{O}_{T_{\beta}}(x)}=\alpha\big\},$$
here $\dim_{\rm H}$ is defined with respect to the Euclidean metric on $[0, 1]$.
Then we have the following theorem:
\begin{thm}\label{beta}
	Let $\beta>1$ and $\alpha\in [0,1]$. Then $\dim_{\rm H}E_{T_\beta}(\alpha)=\alpha.$
\end{thm}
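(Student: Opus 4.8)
The plan is to deduce Theorem~\ref{beta} from Theorem~\ref{mainthm} by transferring the statement from the symbolic space $\Sigma_\beta$ to the interval $[0,1)$ via the greedy $\beta$-expansion coding map. First I would fix the projection $\pi_\beta\colon\Sigma_\beta\to[0,1]$ sending a sequence to the real number it represents; this is a continuous, surjective, almost-everywhere one-to-one factor map intertwining $\sigma$ and $T_\beta$. The key point is to choose the potential $\varphi\equiv\log\beta$ (a constant, hence trivially satisfying $\sum_n\mathrm{Var}_n\varphi<\infty$), so that the induced metric $d_\varphi$ on $\Sigma_\beta$ is (bi-Lipschitz equivalent to, in fact equal to after normalizing) the pullback under $\pi_\beta$ of the Euclidean metric on $[0,1]$. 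With this choice one has $\gamma=\dim_{\rm H}^\varphi\Sigma_\beta=1$, and for any subset $A\subseteq\Sigma_\beta$, $\dim_{\rm H}^\varphi A=\dim_{\rm H}\pi_\beta(A)$; moreover $\pi_\beta$ maps orbit closures to orbit closures, i.e. $\pi_\beta\big(\overline{\mathcal{O}_\sigma(w)}\big)=\overline{\mathcal{O}_{T_\beta}(\pi_\beta(w))}$.

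Second, I would invoke Proposition~\ref{beta-spec} (cited in the excerpt) to verify that $(\Sigma_\beta,\sigma)$ admits a decomposition $\mathcal{L}=\mathcal{C}^p\mathcal{G}\mathcal{C}^s$ with $\mathcal{G}$ having the specification property, and that the pressure gap condition $\sum_{n\geq1}Z_n(\sigma,\mathcal{C}^p\cup\mathcal{C}^s,-\gamma\varphi)<\infty$ holds; this is exactly where the structure of $\beta$-shifts (governed by the $\beta$-expansion of $1$, with $\mathcal{C}^p\cup\mathcal{C}^s$ being a ``thin'' collection of prefixes/suffixes of that expansion) enters, and it is stated to hold for \emph{every} $\beta>1$. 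Granting this, Theorem~\ref{mainthm} yields $\dim_{\rm H}^\varphi E_\sigma^\varphi(\alpha)=\alpha$ for every $\alpha\in[0,1]$.

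Third, I would translate this back: because $d_\varphi$ on $\Sigma_\beta$ and the Euclidean metric on $[0,1]$ correspond under $\pi_\beta$, and because $\dim_{\rm H}\overline{\mathcal{O}_\sigma(w)}=\dim_{\rm H}\overline{\mathcal{O}_{T_\beta}(\pi_\beta(w))}$, we get $\pi_\beta\big(E_\sigma^\varphi(\alpha)\big)=E_{T_\beta}(\alpha)$ up to a set that projects into the countable set of points with two $\beta$-expansions, which has Hausdorff dimension $0$ and does not affect the dimension (since $\alpha\geq0$ and the dimension of the full level set dominates). Since $\pi_\beta$ is Lipschitz (it does not increase Euclidean distances) and its restriction to the complement of a countable set is injective with Lipschitz inverse on each cylinder, dimension is preserved, giving $\dim_{\rm H}E_{T_\beta}(\alpha)=\dim_{\rm H}^\varphi E_\sigma^\varphi(\alpha)=\alpha$.

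The main obstacle I anticipate is the careful bookkeeping around the non-injectivity of $\pi_\beta$ and the precise comparison of the two metrics: one must check that a cylinder $[w_1\cdots w_n]$ in $\Sigma_\beta$ maps to an interval of length comparable to $\beta^{-n}$ (this is where admissibility and the lexicographic characterization of $\Sigma_\beta$ matter — a cylinder can be ``short'' but never long, so one gets the needed two-sided comparison only after a standard argument, or by passing to a slightly thickened/thinned family), and that countably many ambiguous points are genuinely negligible at the level of Hausdorff dimension for \emph{every} value of $\alpha$, including $\alpha=0$. A secondary technical point is confirming that the hypotheses of Theorem~\ref{mainthm} are met uniformly in $\beta$, i.e. that Proposition~\ref{beta-spec} indeed supplies both the specification decomposition and the pressure gap for all $\beta>1$ and not merely for a generic set of parameters; modulo that cited input, the rest is routine transfer.
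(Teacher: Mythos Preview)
Your overall strategy matches the paper's exactly: take $\varphi\equiv\log\beta$, invoke Proposition~\ref{beta-spec} to feed $(\Sigma_\beta,\sigma)$ into Theorem~\ref{mainthm}, and then push the symbolic level set down to $[0,1)$ via $\pi_\beta$, using $\pi_\beta\big(\overline{\mathcal{O}_\sigma(\omega)}\big)=\overline{\mathcal{O}_{T_\beta}(\pi_\beta(\omega))}$.

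The gap is precisely the point you flagged as an ``obstacle'' but then mis-resolved. Your assertion that $\pi_\beta$ has ``Lipschitz inverse on each cylinder'' is false: for a non-full word $w|_1^n$ the Euclidean length of $\pi_\beta([w|_1^n])$ can be arbitrarily small compared to $\beta^{-n}=|[w|_1^n]|_\varphi$, so $\pi_\beta^{-1}$ is \emph{not} Lipschitz and no bi-Lipschitz comparison of metrics is available. The non-injectivity bookkeeping (countably many ambiguous points) is a red herring; the real issue is the metric distortion in one direction.

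The paper closes this gap with Proposition~\ref{projectionsets}, whose proof does not attempt any pointwise metric comparison. Instead it uses the covering property of $\beta$-expansions (Proposition~4.1 in \cite{BW14}): every Euclidean interval of length $2\beta^{-\ell}$ is covered by at most $4(\ell+1)$ cylinders of order $\ell$. This polynomial multiplicity is harmless in the limit defining box dimension, giving $\overline{\dim}_{\rm B}E\le\overline{\dim}_{\rm B}\pi_\beta(E)$ (and similarly for $\underline{\dim}_{\rm B}$); the Hausdorff case is quoted from \cite{Li19}. So the correct repair is not ``Lipschitz inverse'' but rather ``bounded-multiplicity covering'', which is a genuinely $\beta$-shift-specific input.
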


We remark that the integer case of Theorem \ref{beta} was proved by Feng \cite{Feng}.

\subsubsection{Conformal expanding repellers}
   Let $M$ be a connected $C^1$ Riemannian manifold. Let $U$ be an open subset of $M$ and $T: U\rightarrow M$ be a $C^{1}$ map. If the following holds:
\begin{enumerate}[(1)]
  \item the map $T: U\rightarrow M$ is conformal in the sense that the derivative $T'(x)$ is a scalar multiple of an orthogonal matrix;
  \item there exist a compact set $X\subset U$ and an open neighborhood $V$ of $X$ such that $T(X)\subset X$ and $T^{-1}(X)\cap V\subset X$;
  \item there exist constants $C>0$ and $\lambda>1$ such that
    \begin{equation*}
      \|(T^n)'(x)(u)\|\geq C\lambda^n \|u\|
    \end{equation*}
  for all $x\in X, u\in T_x M$ and $n\geq 1$, here $\|\cdot\|$ denotes the Euclidean norm;
\end{enumerate}
then $(X, T)$ is called a conformal expanding repeller, abbreviated as CER.

For each $\alpha\in [0, \dim_{\rm H}X]$, write
$$E_{T}(\alpha)=\big\{x\in X: \dim_{\rm H} \overline{\mathcal{O}_{T}(x)}=\alpha\big\},$$
here $\dim_{\rm H}$ is defined with respect to the natural metric on $X$.
\begin{thm}\label{CER}
  Let $(X, T)$ be CER whose symbolic extension satisfies the specification property. Then we have
$\dim_{\rm H}E_T(\alpha)=\alpha$ for any $\alpha\in [0, \dim_{\rm H} X]$.
\end{thm}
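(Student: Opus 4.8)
The strategy is to deduce Theorem \ref{CER} from Theorem \ref{mainthm} by coding the repeller symbolically with a suitably chosen metric potential. A CER admits a Markov partition $\{R_1,\dots,R_k\}$, which produces a subshift of finite type $(\Sigma,\sigma)$ over the finite alphabet $\{1,\dots,k\}$ --- its symbolic extension --- together with a continuous surjection $\pi:\Sigma\to X$ satisfying $T\circ\pi=\pi\circ\sigma$ that is finite-to-one and is injective off $\pi^{-1}\big(\bigcup_{n\ge0}T^{-n}(\bigcup_i\partial R_i)\big)$. Since $(\Sigma,\sigma)$ has the specification property by hypothesis, I would use the trivial decomposition $\mathcal{L}=\mathcal{C}^p\mathcal{G}\mathcal{C}^s$ with $\mathcal{C}^p=\mathcal{C}^s=\{\text{empty word}\}$ and $\mathcal{G}=\mathcal{L}$: then condition (1) of Theorem \ref{mainthm} holds, and condition (2) holds vacuously, as $Z_n(\sigma,\mathcal{C}^p\cup\mathcal{C}^s,-\gamma\varphi)=0$ for every $n\ge1$.

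Next I would take $\varphi(w)=\log\|T'(\pi(w))\|$ (replacing it, if necessary, by a cohomologous strictly positive continuous function, which is possible since $T$ is uniformly expanding and which affects neither the finiteness of $\sum_n\text{Var}_n\varphi$ nor the metric $d_\varphi$ beyond uniformly comparable constants). By conformality and the bounded distortion property of CERs one has $\sum_{n\ge1}\text{Var}_n\varphi<\infty$, and, via the mean value inequality for conformal maps, one has that for every admissible word $I$ of length $n$ the cylinder $[I]\subset(\Sigma,d_\varphi)$ and its image $\pi([I])\subset X$ (in the natural metric) have comparable diameters, with constants independent of $I$ and $n$, while $\{\pi([I]):|I|=n\}$ covers $X$ with uniformly bounded multiplicity. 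From this, together with the facts that $\pi$ is Lipschitz and finite-to-one, one checks that $\dim^\varphi_{\rm H}Y=\dim_{\rm H}\pi(Y)$ for every closed $\sigma$-invariant set $Y\subseteq\Sigma$: the inequality ``$\ge$'' is the Lipschitz bound, and ``$\le$'' follows by pulling an efficient cover of $\pi(Y)$ by image-cylinders back to cylinders in $\Sigma$. In particular $\gamma=\dim^\varphi_{\rm H}\Sigma=\dim_{\rm H}X$, so $[0,\gamma]=[0,\dim_{\rm H}X]$.

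Now I would transfer level sets. For every $w\in\Sigma$, continuity and equivariance of $\pi$ give $\pi(\overline{\mathcal{O}_\sigma(w)})=\overline{\mathcal{O}_T(\pi(w))}$, hence $\dim^\varphi_{\rm H}\overline{\mathcal{O}_\sigma(w)}=\dim_{\rm H}\overline{\mathcal{O}_T(\pi(w))}$ by the identity above; since $\pi$ is onto, this yields $E_T(\alpha)=\pi\big(E^\varphi_\sigma(\alpha)\big)$ for every $\alpha\in[0,\gamma]$. Applying Theorem \ref{mainthm} to $(\Sigma,\sigma,\varphi)$ gives $\dim^\varphi_{\rm H}E^\varphi_\sigma(\alpha)=\alpha$, and the Lipschitz property of $\pi$ yields the upper bound $\dim_{\rm H}E_T(\alpha)=\dim_{\rm H}\pi(E^\varphi_\sigma(\alpha))\le\alpha$. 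For the matching lower bound I would transport the object built in the proof of Theorem \ref{mainthm} (equivalently, combine Theorem \ref{intermediate-dimension} with the Remark): it supplies a $\sigma$-invariant measure $\nu$ with $\dim^\varphi_{\rm H}\nu=\alpha$ and $\nu\big(E^\varphi_\sigma(\alpha)\big)=1$ whose typical points avoid the negligible preimage of the Markov boundaries, where $\pi$ is bi-Lipschitz; then $\pi_*\nu$ is $T$-invariant with $\dim_{\rm H}\pi_*\nu=\alpha$ and $\pi_*\nu\big(E_T(\alpha)\big)=1$, so $\dim_{\rm H}E_T(\alpha)\ge\alpha$. The two bounds give $\dim_{\rm H}E_T(\alpha)=\alpha$.

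The only substantial work lies in the dictionary between $\dim^\varphi_{\rm H}$ on $\Sigma$ and $\dim_{\rm H}$ on $X$: one must establish the bounded distortion estimate underlying $\sum_n\text{Var}_n\varphi<\infty$ and the cylinder/image-diameter comparison (this is where the smoothness of the CER enters, and may call for the customary $C^{1+\epsilon}$ hypothesis), and one must control the non-injectivity of $\pi$ over the Markov-partition boundaries. Both are standard in the theory of conformal expanding repellers, and once they are in hand Theorem \ref{CER} follows formally from Theorem \ref{mainthm}.
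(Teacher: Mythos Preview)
Your proposal is correct and follows essentially the same route as the paper: code the repeller via a Markov partition, take $\varphi=\log\|T'\circ\pi\|$, verify the hypotheses of Theorem~\ref{mainthm} (with the trivial decomposition $\mathcal{G}=\mathcal{L}$, since $\Sigma$ itself has specification), and transfer dimensions through $\pi$ using the diameter comparison between cylinders and their images.

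The one place where you work harder than necessary is the lower bound. You restrict the claim $\dim^\varphi_{\rm H}Y=\dim_{\rm H}\pi(Y)$ to closed $\sigma$-invariant $Y$, and then, because $E^\varphi_\sigma(\alpha)$ is not of this type, you push forward the measure $\nu$ from Theorem~\ref{intermediate-dimension} to recover $\dim_{\rm H}E_T(\alpha)\ge\alpha$. This detour is avoidable: the very diameter-comparison and bounded-multiplicity argument you sketch (Lipschitz one way, pull back efficient covers the other way) does not use invariance or closedness of $Y$ at all. The paper simply quotes this as Proposition~\ref{CER_regularity}, taken from Gatzouras--Peres \cite{GP97}: $\pi$ satisfies the hypotheses of Lemmas~\ref{keylemma1} and~\ref{keylemma2} with exponents $a,b$ arbitrarily close to $1$, so $\dim_{\rm H}E=\dim_{\rm H}\pi(E)$ for \emph{every} $E\subset\Sigma_T$. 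Once you have this, both bounds are immediate: $\dim_{\rm H}E_T(\alpha)=\dim_{\rm H}\pi(E^\varphi_\sigma(\alpha))=\dim^\varphi_{\rm H}E^\varphi_\sigma(\alpha)=\alpha$, with no need for the measure argument or the claim about typical points avoiding Markov boundaries (which, as stated, would itself require justification).
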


\subsection{Orbit complexity: two dynamical systems}
Another motivation of this paper is the well known Furstenberg conjecture on the sum of dimensions of orbits under two dynamical systems which exhibit some kind of independence \cite{Fur69}. Here we only consider the most simple $p$-transformation $T_p$ on the unit interval $[0, 1)$, where $p\geq 2$ is an integer. Two integers $p, q\geq 2$ are called \emph{multiplicatively independent} if $\log p/\log q\notin\mathbb{Q}$, or equivalently, there exist no $a, b\in\mathbb{N}$ such that $p^a=q^b$, we denote by $p\nsim q$; otherwise, $p$ and $q$ are called \emph{multiplicatively dependent} and denoted by $p\sim q$.
Furstenberg \cite{Fur67} conjectured that
\begin{equation*}
\dim_{\rm H}\overline{\mathcal{O}_{T_p}(x)}+\dim_{\rm H}\overline{\mathcal{O}_{T_q}(x)} \geq 1
\end{equation*}
for any $x\in [0, 1)\setminus\mathbb{Q}$ under the assumption that $p\nsim q$.
Shmerkin \cite{Shm19} and Wu \cite{Wu19} independently proved that $$\dim_{\rm H}\{x\in [0,1)\setminus\mathbb{Q}: \dim_{\rm H}\overline{\mathcal{O}_{T_p}(x)}+\dim_{\rm H}\overline{\mathcal{O}_{T_q}(x)}< 1\}=0$$
assuming $p\nsim q$. In fact they proved that the above set is a countable union of sets with upper box dimension zero. A natural question is to study the following set
$$F_{p,q}^s:= \{x\in [0, 1): \dim_{\rm H}\overline{\mathcal{O}_{T_p}(x)}+\dim_{\rm H}\overline{\mathcal{O}_{T_q}(x)} =s\},$$
which we call the \emph{Furstenberg level set}, here $0\leq s\leq 2$. Then we can ask
 $$\dim_{\rm H}F_{p,q}^s=?$$
Since the Lebesgume measure $\lambda$ is ergordic with respect to $T_p$ for any integer $p\geq 2$, we know that $\lambda$-a.e. $x\in [0,1)$, $\overline{\mathcal{O}_{T_p}(x)}=[0,1]$, which implies that $\lambda(F_{p,q}^2)=1$.

The following result indicates that the situations are quite different in the multiplicatively dependent and independent cases.
\begin{thm}\label{Furstenbergset}
Let $p, q\geq 2$ be integers and $s\in [0, 2]$. Then
   $$\dim_{\rm H} F_{p, q}^s=\begin{cases}
   \frac{s}{2},\ \  &\ \text{if}\  p\sim q;\\
   \max\{0, s-1\},\ \  &\ \text{if}\ p\nsim q.
   \end{cases}$$
 \end{thm}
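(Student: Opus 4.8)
The dependent and independent cases rest on entirely different mechanisms, so the plan is to treat them separately.

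\emph{The case $p\sim q$.} Fix $a,b\in\mathbb N$ with $N:=p^{a}=q^{b}$, so that $T_N=T_p^{\,a}=T_q^{\,b}$. The first step is to show $\dim_{\rm H}\overline{\mathcal O_{T_p}(x)}=\dim_{\rm H}\overline{\mathcal O_{T_q}(x)}$ for every $x\in[0,1)$. Splitting $n=ak+j$ gives $\mathcal O_{T_p}(x)=\bigcup_{j=0}^{a-1}T_p^{\,j}\big(\mathcal O_{T_N}(x)\big)$, hence $\overline{\mathcal O_{T_p}(x)}=\bigcup_{j=0}^{a-1}\overline{T_p^{\,j}\big(\mathcal O_{T_N}(x)\big)}$. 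Each $T_p^{\,j}$ is piecewise a similarity with finitely many points of discontinuity, so $\overline{T_p^{\,j}(A)}$ and $T_p^{\,j}(\overline A)$ differ by at most a finite set and $\dim_{\rm H}\overline{T_p^{\,j}\big(\mathcal O_{T_N}(x)\big)}=\dim_{\rm H}\overline{\mathcal O_{T_N}(x)}$; therefore $\dim_{\rm H}\overline{\mathcal O_{T_p}(x)}=\dim_{\rm H}\overline{\mathcal O_{T_N}(x)}$, and the same identity holds with $q$ in place of $p$. Consequently $F_{p,q}^{s}=\{x\in[0,1):\dim_{\rm H}\overline{\mathcal O_{T_p}(x)}=s/2\}=E_{T_p}(s/2)$ (using $s/2\le 1$), and Theorem~\ref{beta} yields $\dim_{\rm H}F_{p,q}^{s}=s/2$.

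\emph{The case $p\nsim q$: upper bound.} The crux is the estimate
\[
\dim_{\rm H}\big\{x\in[0,1):\dim_{\rm H}\overline{\mathcal O_{T_p}(x)}\le t,\ \dim_{\rm H}\overline{\mathcal O_{T_q}(x)}\le u\big\}\ \le\ \max\{0,\,t+u-1\}.
\]
To prove it I would combine two facts. First, $\dim_{\rm H}\overline{\mathcal O_{T_p}(x)}$ equals $1/\log p$ times the exponential growth rate of the number of length-$n$ factors of the base-$p$ expansion of $x$; since every closed $\times p$-invariant subset of $[0,1)$ is contained in an $N$-step subshift of finite type whose dimension is within $\varepsilon$ of it, and there are only countably many subshifts of finite type, the set $\{x:\dim_{\rm H}\overline{\mathcal O_{T_p}(x)}\le t\}$ lies, for each $\varepsilon>0$, in a countable union of closed $\times p$-invariant sets of dimension $\le t+\varepsilon$ (and symmetrically for $q$). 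Second, the Furstenberg transversality theorem of Shmerkin~\cite{Shm19} and Wu~\cite{Wu19}: for $p\nsim q$, a closed $\times p$-invariant $A$ and a closed $\times q$-invariant $B$ satisfy $\dim_{\rm H}(A\cap B)\le\max\{0,\dim_{\rm H}A+\dim_{\rm H}B-1\}$. Applying this to each of the countably many pairs and letting $\varepsilon\to 0$ gives the displayed estimate. The upper bound on $\dim_{\rm H}F_{p,q}^{s}$ then follows: a point $x\in F_{p,q}^{s}$ with $\dim_{\rm H}\overline{\mathcal O_{T_p}(x)}=1$ (or $\dim_{\rm H}\overline{\mathcal O_{T_q}(x)}=1$) lies in $E_{T_q}(s-1)$ (resp.\ $E_{T_p}(s-1)$), whose dimension is $\max\{0,s-1\}$ by Theorem~\ref{beta}; for the remaining points both orbit dimensions are $<1$, and choosing rationals $t>\dim_{\rm H}\overline{\mathcal O_{T_p}(x)}$, $u>\dim_{\rm H}\overline{\mathcal O_{T_q}(x)}$ with $t+u<s+\varepsilon$ places $x$ in the set just estimated, so a countable union over such rational pairs together with $\varepsilon\to0$ gives $\dim_{\rm H}F_{p,q}^{s}\le\max\{0,s-1\}$.

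\emph{The case $p\nsim q$: lower bound.} For $s\le 1$ there is nothing to prove. For $s\in(1,2]$ I would exhibit a measure $\nu$ on $[0,1)$ with $\dim_{\rm H}\nu=s-1$ and $\nu(F_{p,q}^{s})=1$; take $\nu$ to be a shift-invariant ergodic measure on the full base-$q$ shift of entropy $(s-1)\log q$ (Lebesgue measure when $s=2$). Then $\dim_{\rm H}\nu=s-1$, and since the full base-$q$ shift trivially satisfies the hypotheses of Theorem~\ref{mainthm}, the measure-theoretic assertion in the Remark following it gives $\dim_{\rm H}\overline{\mathcal O_{T_q}(x)}=s-1$ for $\nu$-a.e.\ $x$. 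On the other hand $h(\nu,T_q)=(s-1)\log q>0$ and $p\nsim q$, so by Host's theorem on equidistribution of positive-entropy $\times q$-invariant measures (see also the work of Hochman and Shmerkin) $\nu$-a.e.\ $x$ is normal to base $p$, whence $\overline{\mathcal O_{T_p}(x)}=[0,1]$ and $\dim_{\rm H}\overline{\mathcal O_{T_p}(x)}=1$. Thus $\nu$-a.e.\ $x$ satisfies $\dim_{\rm H}\overline{\mathcal O_{T_p}(x)}+\dim_{\rm H}\overline{\mathcal O_{T_q}(x)}=s$, and therefore $\dim_{\rm H}F_{p,q}^{s}\ge\dim_{\rm H}\nu=s-1$.

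I expect the main obstacle to be the upper-bound estimate in the independent case: the real content is to reduce the orbit-complexity conditions to honestly $\times p$- and $\times q$-invariant closed sets of controlled dimension, so that the deep transversality theorem of \cite{Shm19,Wu19} can be applied, while correctly discarding the (countable, hence negligible) set of base-$p$ or base-$q$ rationals at which the symbolic coding misbehaves. The normality input used in the lower bound is the other substantial ingredient, but it enters only as a citation.
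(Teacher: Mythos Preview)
Your treatment of the dependent case and of the upper bound in the independent case follows the paper's argument closely. For $p\sim q$ the paper proves (Proposition~\ref{dependentcase}) that $\dim_{\rm H}\overline{\mathcal O_{T_p}(x)}=\dim_{\rm H}\overline{\mathcal O_{T_q}(x)}$ via the common power $m=p^a=q^b$ and then invokes Theorem~\ref{beta}, just as you do; your use of the piecewise-similarity of $T_p^{\,j}$ is a minor variant of the paper's route through box dimension and Proposition~\ref{invariant}. For the upper bound the paper also covers $F_{p,q}^s$ by a countable family of intersections $A\cap B$ of closed $\times p$- and $\times q$-invariant sets (using $p^k$- and $q^l$-Cantor sets where you use SFTs) and then applies the Shmerkin--Wu intersection theorem; your case-splitting is unnecessary but harmless.

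There is, however, a genuine gap in your lower bound for $p\nsim q$. You take an \emph{arbitrary} ergodic $T_q$-invariant measure $\nu$ of entropy $(s-1)\log q$ and invoke the Remark after Theorem~\ref{mainthm} to conclude $\dim_{\rm H}\overline{\mathcal O_{T_q}(x)}=s-1$ for $\nu$-a.e.\ $x$. This does not follow. For any ergodic $\nu$ one has $\overline{\mathcal O_{T_q}(x)}=\mathrm{supp}\,\nu$ for $\nu$-a.e.\ $x$, so the orbit-closure dimension is $\dim_{\rm H}\mathrm{supp}\,\nu$, which may well exceed $\dim_{\rm H}\nu$: a Bernoulli $(\varepsilon,1-\varepsilon)$ measure on $\{0,1\}^{\mathbb N}$ has dimension close to $0$ but full support. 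The Remark does not claim the assertion for all ergodic measures; what the proof of Theorem~\ref{mainthm} actually yields (via Theorem~\ref{intermediate-dimension}) is, for each $\alpha$, a \emph{specific} ergodic measure whose support also has dimension $\alpha$. The paper's lower bound uses precisely this: it produces a closed $T_p$-invariant set $A$ with $\dim_{\rm H}A=s-1$ together with an ergodic measure $\mu$ of maximal dimension on $A$ (so $\dim_{\rm H}\mathrm{supp}\,\mu=\dim_{\rm H}\mu=s-1$), whence $\dim_{\rm H}\overline{\mathcal O_{T_p}(x)}=s-1$ for $\mu$-a.e.\ $x$, and then applies Host's theorem to force $\dim_{\rm H}\overline{\mathcal O_{T_q}(x)}=1$. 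Your argument is repaired by choosing $\nu$ in this more careful way, but as written the step fails.
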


Throughout the paper, for functions $f$ and positive-valued functions $g$, we write $f\lesssim g$ if there exists a constant $C$ such that $|f|\leq C g$, and write $f\sim g$ if $f\lesssim g$ and $g\lesssim f$ simultaneously.

\section{Preliminaries}
\subsection{Hausdorff measures and dimensions}\label{Hausdorff}
Let $(X, d)$ be a metric space. For any subset $E\subset X$, $s\geq 0$ and $r>0$, the \emph{$s$-dimensional Hausdorff measure} of $E$ is defined by
\begin{equation*}
   \mathcal{H}^s(E):=\lim_{r\rightarrow 0}\inf\Big\{\sum_{i\geq 1}|U_i|^s: E\subset\bigcup_{i\geq 1}U_i\;\text{and}\;
   |U|_i\leq r\;\text{for all}\;i\geq 1\Big\},
\end{equation*}
where $|U|$ denotes the diameter of $U$. The \emph{Hausdorff dimension} of $E$ is defined by
\begin{equation*}
   \dim_{\rm H} E:=\inf\big\{s\geq 0: \mathcal{H}^s(E)=0\big\}=\sup\big\{s\geq 0: \mathcal{H}^s(E)=\infty\big\}.
\end{equation*}
Let $E$ be a totally bounded set in $X$, i.e., for any $r>0$, $E$ can be covered by finite number of balls of diameter $r$. Let $N_{r}(E)$ be the minimal number of balls of diameter $r$ to cover $E$. The \emph{upper and lower box dimension of $E$} are defined respectively by
\begin{equation}\label{box}
    \overline{\dim}_{\rm B} E=\limsup_{r\rightarrow 0}\frac{\log N_{r}(E)}{-\log r}\;\text{and}\;
    \underline{\dim}_{\rm B} E=\liminf_{r\rightarrow 0}\frac{\log N_{r}(E)}{-\log r}.
\end{equation}
If the limsup and liminf in \eqref{box} coincide, then the common value is defined as the \emph{box dimension of $E$}, denoted by $\dim_{\rm B} E$. It is well known that
$\dim_{\rm H} E\leq\underline{\dim}_{\rm B} E\leq\overline{\dim}_{\rm B} E$ holds for any totally bounded set, and all of them are bi-Lipschitz invariants.

Let $\mu$ be Borel probability measure on $X$, the Hausdorff dimension of $\mu$ if defined by
\begin{equation*}
   \dim_{\rm H}\mu=\inf\left\{\dim_{\rm H} E: E\;\text{is Borel}\;,\mu(E)=1\right\}.
\end{equation*}
We call $\mu$ a \emph{measure of maximal dimension on $X$} if $\dim_{\rm H}\mu=\dim_{\rm H}X$. See \cite{Fal90} and \cite{Mat95} for more details on Hausdorff measures and dimensions, and for \cite{FLR02} on dimension of measures.
The next two lemmas connect the Hausdorff measures and dimensions in two metric spaces.
\begin{lem}\label{keylemma1}\emph{(\cite[Lemma 2.4]{GP97})}
  Let $X$ and $Y$ be two compact metric spaces and $\pi: X\rightarrow Y$ be a continuous map satisfying a H\"{o}lder
condition with exponent $a>0$. Then for any $E\subset X$ and any $s\geq 0$, we have
\begin{equation*}
  \mathcal{H}^s(E)<\infty\; \Rightarrow \; \mathcal{H}^{s/a}(\pi(E))<\infty.
\end{equation*}
Thus, $$\dim_{\rm H}\pi(E)\leq a^{-1}\dim_{\rm H}E.$$ Moreover, for any compact subset $F\subset X$,
$$\overline{\dim}_{\rm B}\pi(F)\leq a^{-1}\overline{\dim}_{\rm B}F \;\;\text{and}\;\;\underline{\dim}_{\rm B}\pi(F)\leq a^{-1}\underline{\dim}_{\rm B}F.$$
\end{lem}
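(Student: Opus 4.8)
The plan is to transport covers and ball-covers forward through $\pi$, keeping track of how diameters shrink under the Hölder bound. First I would write the Hölder hypothesis explicitly: there is a constant $C>0$ with $d_Y(\pi(x),\pi(x'))\le C\,d_X(x,x')^{a}$ for all $x,x'\in X$. The only consequence I would use is that $\diam\pi(U)\le C\,(\diam U)^{a}$ for every $U\subset X$.

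For the Hausdorff-measure implication, fix $s\ge0$ with $\mathcal{H}^{s}(E)<\infty$ and fix $r>0$. Given any cover $\{U_i\}_{i\ge1}$ of $E$ with $|U_i|\le r$, the family $\{\pi(U_i)\}_{i\ge1}$ covers $\pi(E)$ with $|\pi(U_i)|\le Cr^{a}$ and
\[
\sum_{i\ge1}|\pi(U_i)|^{s/a}\le\sum_{i\ge1}\bigl(C|U_i|^{a}\bigr)^{s/a}=C^{s/a}\sum_{i\ge1}|U_i|^{s}.
\]
Taking the infimum over admissible covers $\{U_i\}$ and then letting $r\to0$ (so that the mesh $Cr^{a}$ of the image cover also tends to $0$), I obtain $\mathcal{H}^{s/a}(\pi(E))\le C^{s/a}\,\mathcal{H}^{s}(E)<\infty$, which is the asserted implication. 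The same inequality shows $\mathcal{H}^{s}(E)=0\Rightarrow\mathcal{H}^{s/a}(\pi(E))=0$; hence for every $s>\dim_{\rm H}E$ one has $\mathcal{H}^{s}(E)=0$, so $\dim_{\rm H}\pi(E)\le s/a$, and letting $s\downarrow\dim_{\rm H}E$ gives $\dim_{\rm H}\pi(E)\le a^{-1}\dim_{\rm H}E$.

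For the box-dimension estimates, let $F\subset X$ be compact and fix $r>0$. Cover $F$ by $N_r(F)$ balls of diameter $r$; the image of each is a set of diameter at most $Cr^{a}$, hence is contained in a ball of $Y$ of diameter $2Cr^{a}$. Thus $N_{2Cr^{a}}(\pi(F))\le N_r(F)$, and therefore
\[
\frac{\log N_{2Cr^{a}}(\pi(F))}{-\log(2Cr^{a})}\le\frac{\log N_r(F)}{-a\log r-\log(2C)}.
\]
Since $r\mapsto2Cr^{a}$ is a continuous increasing bijection of $(0,\infty)$ taking $r\to0$ to $\rho\to0$, passing to $\limsup$ (resp. $\liminf$) as $r\to0$ identifies the left-hand side with $\overline{\dim}_{\rm B}\pi(F)$ (resp. $\underline{\dim}_{\rm B}\pi(F)$), while on the right the additive constant $\log(2C)$ is negligible against $-a\log r\to+\infty$, leaving $a^{-1}\overline{\dim}_{\rm B}F$ (resp. $a^{-1}\underline{\dim}_{\rm B}F$).

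I do not expect a genuine obstacle: the lemma is routine once the Hölder bound is recorded in the form $\diam\pi(U)\le C(\diam U)^{a}$. The only places warranting a line of care are the exchange of the infimum with the $r\to0$ limit in the definition of $\mathcal{H}^{s/a}$ — legitimate because the image mesh $Cr^{a}\to0$ in step with $r$ — and, in the box-dimension part, the harmless constant $2C$ picked up when enclosing a small-diameter set in a ball together with the reparametrization $\rho=2Cr^{a}$, neither of which affects the upper or lower limits.
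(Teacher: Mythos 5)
Your proof is correct; this is the standard push-forward-of-covers argument for Hölder maps, and the paper itself gives no proof of this lemma but simply quotes it from Gatzouras--Peres \cite[Lemma 2.4]{GP97}. The two points you flag as needing care (the mesh $Cr^{a}\to 0$ justifying the passage to $\mathcal{H}^{s/a}$, and the reparametrization $\rho=2Cr^{a}$ being a genuine bijection of small scales rather than a subsequence, so the upper and lower limits are unaffected) are exactly the right ones, and you handle both correctly.
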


\begin{lem}\label{keylemma2}\emph{(\cite[Lemma 5.1]{GP97})}
  Let $X$ and $Y$ be two compact metric spaces and $\pi: X\rightarrow Y$ be a continuous map satisfying the following
property: there exists $b, c, N>0$ such that for all sufficiently small $\epsilon>0$ and any ball $B$ of radius $\epsilon$ in $Y$, the preimage $\pi^{-1}(B)$ can be covered by at most $N$ sets of diameter less than $c\epsilon^b$. Then for any set $E\subset Y$, we have that
\begin{equation*}
  \mathcal{H}^s(\pi^{-1}(E))>0\; \Rightarrow \; \mathcal{H}^{sb}(E)>0.
\end{equation*}
Thus, $$\dim_{\rm H} E\geq b\dim_{\rm H}\pi^{-1}(E).$$ Moreover, for any compact subset
$F\subset X$, $$\overline{\dim}_{\rm B}F\geq b^{-1}\overline{\dim}_{\rm B}\pi^{-1}(F)\;\;\text{and}\;\;
\underline{\dim}_{\rm B}F\geq b^{-1}\underline{\dim}_{\rm B}\pi^{-1}(F).$$
\end{lem}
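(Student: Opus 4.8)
The plan is to establish the measure implication by contraposition, using only the covering hypothesis together with the definition of the Hausdorff premeasure, and then to deduce the dimension statements formally. So suppose $\mathcal{H}^{sb}(E)=0$; I will show $\mathcal{H}^{s}(\pi^{-1}(E))=0$. Fix $\delta>0$. Since $\mathcal{H}^{sb}(E)=0$, for every small $\rho>0$ there is a countable cover $E\subset\bigcup_i V_i$ with $|V_i|\le\rho$ and $\sum_i|V_i|^{sb}<\delta$; discarding empty sets and choosing $y_i\in V_i$, we get $V_i\subset B(y_i,\epsilon_i)$ with $\epsilon_i:=|V_i|\le\rho$. Take $\rho$ below the threshold in the hypothesis, so that for each $i$ the preimage $\pi^{-1}(B(y_i,\epsilon_i))$ is covered by at most $N$ sets $W_{i,1},\dots,W_{i,N}$ of diameter $<c\epsilon_i^{b}\le c\rho^{b}$. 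Then $\pi^{-1}(E)\subset\bigcup_i\bigcup_{j=1}^{N}W_{i,j}$ is a countable cover by sets of diameter $<c\rho^{b}$, and $\sum_i\sum_{j=1}^{N}|W_{i,j}|^{s}\le Nc^{s}\sum_i\epsilon_i^{sb}=Nc^{s}\sum_i|V_i|^{sb}<Nc^{s}\delta$.

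Consequently the Hausdorff $s$-premeasure of $\pi^{-1}(E)$ at scale $c\rho^{b}$ is at most $Nc^{s}\delta$ for all sufficiently small $\rho$; since $c\rho^{b}\to0$ as $\rho\to0$, letting $\rho\to0$ gives $\mathcal{H}^{s}(\pi^{-1}(E))\le Nc^{s}\delta$, and then $\delta\to0$ gives $\mathcal{H}^{s}(\pi^{-1}(E))=0$. This is the contrapositive of the desired implication $\mathcal{H}^{s}(\pi^{-1}(E))>0\Rightarrow\mathcal{H}^{sb}(E)>0$. For the Hausdorff dimension bound, set $d=\dim_{\rm H}\pi^{-1}(E)$: for every $s<d$ one has $\mathcal{H}^{s}(\pi^{-1}(E))=\infty>0$, hence $\mathcal{H}^{sb}(E)>0$ and $\dim_{\rm H}E\ge sb$; letting $s\uparrow d$ yields $\dim_{\rm H}E\ge bd=b\dim_{\rm H}\pi^{-1}(E)$.

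The box-dimension statements (for a compact $F\subset Y$, with $\pi^{-1}(F)\subset X$) follow from the same covering count made quantitative: a cover of $F$ by $N_{2\epsilon}(F)$ balls of radius $\epsilon$ gives, via the hypothesis, a cover of $\pi^{-1}(F)$ by at most $N\,N_{2\epsilon}(F)$ sets of diameter $<c\epsilon^{b}$; dividing the logarithm of this covering number by $-\log(c\epsilon^{b})=b(-\log\epsilon)-\log c$ and passing to $\limsup$ (resp. $\liminf$) as $\epsilon\to0$ gives the asserted comparison of upper (resp. lower) box dimensions. I do not expect a genuine obstacle here: this is a routine mass-distribution/covering argument. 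The only point needing care is the simultaneous choice of scales — picking the initial cover of $E$ fine enough that the covering hypothesis is applicable at every radius used and, at the same time, that the induced cover of $\pi^{-1}(E)$ has diameters $c\rho^{b}$ tending to $0$, so that the resulting estimate bounds $\mathcal{H}^{s}$ itself and not merely a premeasure at a fixed positive scale; tracking the harmless constants $N$, $c$ and the exponent $b$ is the remaining bookkeeping.
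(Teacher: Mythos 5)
Your argument is correct, and it is the standard covering proof of this lemma; note that the paper itself does not prove the statement but simply quotes it from \cite[Lemma 5.1]{GP97}, so there is nothing internal to compare against. The contrapositive for the Hausdorff-measure implication, the limit $s\uparrow\dim_{\rm H}\pi^{-1}(E)$, and the quantitative covering count for box dimensions are all exactly what is needed; the singleton/zero-diameter covering sets you gloss over with ``discarding empty sets'' are a genuine but harmless technicality (replace $\epsilon_i=0$ by arbitrarily small positive radii, whose contribution to the $s$-sum can be made negligible).

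One point worth flagging explicitly rather than passing over: your covering count yields $N_{c\epsilon^{b}}(\pi^{-1}(F))\leq N\,N_{2\epsilon}(F)$ and hence $\overline{\dim}_{\rm B}F\geq b\,\overline{\dim}_{\rm B}\pi^{-1}(F)$ for compact $F\subset Y$, which is the estimate consistent with the Hausdorff-dimension inequality $\dim_{\rm H}E\geq b\dim_{\rm H}\pi^{-1}(E)$. The lemma as printed asserts the factor $b^{-1}$ (and writes $F\subset X$); both are typos in the statement, harmless in this paper because the lemma is only invoked with $b$ arbitrarily close to $1$ (Proposition \ref{CER_regularity}). You silently corrected the domain of $F$ but then said your computation ``gives the asserted comparison,'' which it does only modulo this misprint; in a final write-up you should state which inequality you are actually proving.
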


\subsection{Notations and general results in dynamical systems}\label{setting}
We fix a finite $\Lambda=\{1,\cdots,m\}$, called an \emph{alphabet}, let $\Omega=\Lambda^{\mathbb{N}}$ be the product space endowed with the product topology and $\Lambda^*$ be the set of all finite words over $\Lambda$ (including empty word by convention). Denote the left shift map on $\Omega$ by $\sigma$. $\Omega$ is called the \emph{full shift}, and any $\sigma$-invariant and closed subset $\Sigma\subset\Omega$ is called a \emph{subshift}. For each $n\geq 1$, let $\Lambda^n$ be the set of all finite words of length $n$ over $\Lambda$, and $\Lambda^0$ denotes the set of empty word. The \emph{language} of $\Sigma$, denoted by $\mathcal{L}(\Sigma)$ and abbreviated as $\mathcal{L}$, is the set of all finite words appearing in $\Sigma$. Denote $\mathcal{L}_n=\mathcal{L}\cap\Lambda^n$. Given $\textbf{i}\in\mathcal{L}$, we write $|\textbf{i}|$ for the length of $\textbf{i}$. For a finite word $\textbf{i}=i_1\ldots i_n\in\mathcal{L}$ and each $1\leq k\leq n$, denote by $\textbf{i}|_1^k$ the prefix of $\textbf{i}$. Similarly, for an infinite word $w\in\Sigma$ and each $k\geq 1$, let $w|_1^k$ be the prefix of $w$. Throughout this paper, we shall use the bolded letters like $\textbf{i}, \textbf{j}, \textbf{k},\ldots$ to denote finite words in $\Lambda^{*}$. Write $\textbf{i}^{-}=\textbf{i}|_1^{|\textbf{i}|-1}$. We use $\textbf{i}\textbf{j}$ to denote the concatenation of $\textbf{i}$ and $\textbf{j}$, and $\textbf{i}^\infty$ denotes the concatenation of infinitely many copies of $\textbf{i}$. For each $\textbf{i}\in\mathcal{L}_n$, we use
\begin{equation*}
  [\textbf{i}]=\{w\in\Sigma: w|_1^n=\textbf{i}\}
\end{equation*}
to denote the \emph{$n$-th cylinder associated with $\textbf{i}$ in $\Sigma$}. It is known that all cylinders in $\Sigma$ are open and closed under the product topology (see \cite{LM95}). It is easy to see that each point in a cylinder is the center of that cylinder. Moreover, if the intersection of two cylinders in $\Sigma$ is nonempty, then one must be contained in the other.

Let $\varphi:\Sigma\rightarrow\mathbb{R}_{+}$ be a strictly positive continuous function, denote $S_0\varphi(w)=0$ and $S_n\varphi(w)=\sum_{i=0}^{n-1}\varphi(\sigma^i w)$ for each $n\geq 1$. Define
$$S_n^*\varphi(w)=\min_{w'\in [w|^n_1]}S_n\varphi(w'),$$
which depends only on the first $n$ coordinates of $w$. The \emph{$n$-th variation} of $\varphi$ is
\begin{equation*}
 \text{Var}_n\varphi=\sup\big\{|\varphi(w)-\varphi(w')|: w|_1^n=w'|_1^n\big\}.
\end{equation*}
Since $\varphi$ is uniformly continuous on $\Sigma$, we have $\lim_{n\rightarrow\infty}\text{Var}_n\varphi=0$. Thus, for any $w\in\Sigma$,
\begin{equation*}
   0\leq \frac{1}{n}\big(S_n\varphi(w)-S_n^*\varphi(w)\big)\leq\frac{1}{n}\sum_{i=1}^n\text{Var}_i\varphi\rightarrow 0.
\end{equation*}
Now we are ready to define a metric on $\Sigma$ by
\begin{equation}\label{metric}
   d_{\varphi}(w, w')=e^{-S_n^*\varphi(w)}, \; \forall w, w'\in\Sigma,
\end{equation}
where $n=\min\{i\geq 0: w_{i+1}\neq w'_{i+1}\}$. We note that, when $\varphi(w)\equiv\log m$, the metric $ d_{\varphi}$ is the one we used most often.
Since $\varphi$ is strictly positive,  $S_n^*\varphi(w)\rightarrow\infty$ for any $w$. It readily follows that
$d_{\varphi}$ induces the product topology on $\Sigma$. Denote by $|\cdot|_{\varphi}$ the diameter of a set with respect to $d_{\varphi}$,
then we have
\begin{equation*}
   \left|[w|^n_1]\right|_{\varphi}=e^{-S_n^*\varphi(w)}, \; \forall w\in\Sigma, \;
   n\geq 1.
\end{equation*}
For a Borel subset $E\subset\Sigma$, we define the $s$-dimensional Hausdorff measure, $\varphi$-Box dimension and $\varphi$-Hausdorff dimension of subsets of $\Sigma$ in the usual way, and denote them by $\mathcal{H}_{\varphi}^s(E)$, $\dim^\varphi_{\rm B}$ and $\dim^\varphi_{\rm H}$ respectively.

Let $C(\Sigma)$ be the space of continuous functions on $\Sigma$. Given $\phi\in C(\Sigma)$ and a collection of words $\mathcal{D}\subset\mathcal{L}$, we write $\mathcal{D}_n=\mathcal{D}\cap\Lambda^n$ for each $n\geq 1$ and consider the quantities
\begin{equation}\label{Z-n}
  Z_n(\sigma, \mathcal{D}, \phi):=\sum_{\textbf{i}\in\mathcal{D}_n}\sup_{w\in [\textbf{i}]}e^{S_n\phi(w)}.
\end{equation}
The \emph{upper capacity of $\phi$ on $\mathcal{D}$} is given by
\begin{equation}\label{uppercapacity}
  P(\sigma, \mathcal{D}, \phi)=\limsup_{n\rightarrow\infty}\frac{1}{n}\log Z_n(\sigma, \mathcal{D}, \phi).
\end{equation}
When $\mathcal{D}=\mathcal{L}$, the limit in right-hand side of \eqref{uppercapacity} exists due to the subadditivity:
\begin{equation*}
  \log Z_{m+n}(\sigma, \mathcal{L}, \phi)\leq \log Z_m(\sigma, \mathcal{L}, \phi)+\log Z_n(\sigma, \mathcal{L}, \phi),
\end{equation*}
thus we recover the standard definition of \emph{topological pressure}, and we write $P(\sigma, \phi)$ in place of $P(\sigma, \mathcal{L}, \phi)$. If we take $\phi(w)\equiv 0$, then the quantity defined by \eqref{uppercapacity} characterizes the exponential rate of growth of $\sharp\mathcal{D}_n$, we denote it by $h_{top}(\mathcal{D})$. If $\phi(w)\equiv 0$ and $\mathcal{D}=\mathcal{L}$, then we still have a limit in \eqref{uppercapacity}, which is the \emph{the topological entropy} of $(\Sigma, \sigma)$ (see \cite[Def. 4.1.1.] {LM95}), denoted by $h_{top}(\sigma, \Sigma)$. See \cite{Wa82} for more backgrounds on topological entropy and topological pressures.

Let $\mathcal{M}(\Sigma, \sigma)$ be the set of $\sigma$-invariant Borel probability measures on $\Sigma$, and let $h(\mu, \sigma)$ be the measure-theoretic entropy of $\mu\in\mathcal{M}(\Sigma, \sigma)$. The variational principle states that
\begin{equation*}
   P(\sigma, \phi)=\sup\left\{h(\mu, \sigma)+\int\phi d\mu: \mu\in\mathcal{M}(\Sigma, \sigma)\right\}.
\end{equation*}
A measure $\mu\in\mathcal{M}(\Sigma, \sigma)$ that attains the supremum is called the \emph{equilibrium state} for $\phi$. For an ergodic measure $\mu\in\mathcal{M}(\Sigma, \sigma)$, the $\varphi$-Hausdorff-dimension of $\mu$ is related to the measure-theoretic entropy by the follow lemma.
\begin{lem} \emph{(See \cite[Corollary 2.3]{GP97})}\label{Youngformula}
  Let $\mu$ be a $\sigma$-invariant and ergodic probability measure on $\Sigma$ and let $\varphi: \Sigma\rightarrow\mathbb{R}_{+}$ be a strictly positive continuous function. Then
 \begin{equation}\label{LY-formula}
   \dim^\varphi_{\rm H}\mu=\frac{h(\mu, \sigma)}{\int\varphi d\mu}.
 \end{equation}
\end{lem}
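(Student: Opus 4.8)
To establish Lemma~\ref{Youngformula}, the plan is to compute the pointwise dimension of $\mu$ with respect to $d_\varphi$ at $\mu$-almost every point and then appeal to the standard criterion relating pointwise dimensions of a measure to its Hausdorff dimension. The first step is to record the geometry of the metric $d_\varphi$ from \eqref{metric}. It is an ultrametric: if $w,w'$ agree on their first $n$ coordinates and $w',w''$ on their first $m$ coordinates, then $w,w''$ agree on the first $\min\{n,m\}$ coordinates, and since $\varphi$ is bounded below by a positive constant on the compact space $\Sigma$ the sequence $k\mapsto S_k^*\varphi(w)$ is strictly increasing, so the ultrametric inequality follows from \eqref{metric}. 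Consequently, for every $w\in\Sigma$ and every small $r>0$ the $d_\varphi$-ball $B(w,r)$ is the cylinder $[w|_1^{n(w,r)}]$ with $n(w,r)=\min\{n\ge0:S_n^*\varphi(w)\ge-\log r\}$, and $\left|[w|_1^n]\right|_\varphi=e^{-S_n^*\varphi(w)}$. Moreover $0\le S_n^*\varphi(w)-S_{n-1}^*\varphi(w)\le\max_\Sigma\varphi+\sum_{j=1}^{n-1}\text{Var}_j\varphi=o(n)$ uniformly in $w$ (only $\text{Var}_j\varphi\to0$ is needed here, which is automatic from uniform continuity of $\varphi$), so $-\log r=S_{n(w,r)}^*\varphi(w)+o(n(w,r))$.

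Next I would compute the local dimension. Fix the ergodic measure $\mu$. By the Shannon--McMillan--Breiman theorem, $-\tfrac1n\log\mu([w|_1^n])\to h(\mu,\sigma)$ for $\mu$-a.e.\ $w$; by Birkhoff's ergodic theorem $\tfrac1nS_n\varphi(w)\to\int\varphi\,d\mu$, and since $0\le\tfrac1n(S_n\varphi(w)-S_n^*\varphi(w))\le\tfrac1n\sum_{i=1}^n\text{Var}_i\varphi\to0$ we also get $\tfrac1nS_n^*\varphi(w)\to\int\varphi\,d\mu>0$ for $\mu$-a.e.\ $w$. On the full-measure set where all these limits hold, writing $n=n(w,r)\to\infty$ as $r\to0$ and using the first paragraph,
\begin{equation*}
\frac{\log\mu(B(w,r))}{\log r}=\frac{\log\mu([w|_1^{n}])}{-S_n^*\varphi(w)+o(n)}=\frac{-\tfrac1n\log\mu([w|_1^n])}{\tfrac1nS_n^*\varphi(w)+o(1)}\longrightarrow\frac{h(\mu,\sigma)}{\int\varphi\,d\mu}=:D ,
\end{equation*}
so the pointwise dimension of $\mu$ exists and equals $D$ at $\mu$-a.e.\ point.

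To conclude $\dim_{\rm H}^\varphi\mu=D$, I would invoke the Billingsley--Young pointwise-dimension lemma: the lower bound $\liminf_{r\to0}\tfrac{\log\mu(B(w,r))}{\log r}\ge D-\epsilon$ a.e.\ forces $\dim_{\rm H}^\varphi E\ge D-\epsilon$ for every Borel $E$ with $\mu(E)>0$ by a mass distribution principle, while the corresponding upper bound a.e.\ yields a set of full $\mu$-measure with $\dim_{\rm H}^\varphi\le D+\epsilon$; letting $\epsilon\to0$ gives the claim. In the present ultrametric setting these covering arguments are elementary, since any family of $d_\varphi$-balls contains a pairwise disjoint subfamily with the same union; alternatively one may quote \cite[Corollary~2.3]{GP97} directly. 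The only delicate point in the whole argument is the comparison between metric balls and cylinders in the first paragraph — controlling the gaps $S_n^*\varphi(w)-S_{n-1}^*\varphi(w)$ so that $-\log r$ and $S_{n(w,r)}^*\varphi(w)$ coincide up to $o(n(w,r))$ — after which the result is just a combination of the Shannon--McMillan--Breiman and Birkhoff theorems with the mass distribution principle.
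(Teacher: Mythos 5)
The paper offers no proof of this lemma; it is quoted directly from \cite[Corollary 2.3]{GP97}, and your argument is a correct, self-contained derivation along essentially the same lines as the cited source. The key points are all in order: the ultrametric identification of $d_\varphi$-balls with cylinders, the $o(n)$ control of the increments $S_n^*\varphi(w)-S_{n-1}^*\varphi(w)$ (which needs only $\mathrm{Var}_n\varphi\to 0$, automatic from uniform continuity), the Shannon--McMillan--Breiman and Birkhoff theorems yielding the exact pointwise dimension $h(\mu,\sigma)/\int\varphi\,d\mu$ at $\mu$-a.e.\ point, and the Billingsley--Young criterion, whose covering step is indeed elementary here since two cylinders are either disjoint or nested.
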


We also need Bowen's lemma on the topological entropy of quasi-regular points. Let $X$ be a compact metric space, then the set $\mathcal{M}(X)$ of all Borel probability measures on $X$ is a compact metrizable space under the weak topology. For a continuous map $T$ on $X$, let $\mathcal{M}(X, T)$ be the set of all $T$-invariant Borel probability measures on $X$ and $\mathcal{M}_e(X, T)$ be all $T$-invariant and ergodic measures. For $x\in X$, let $\mu_{n, x}=\frac{1}{n}\sum_{i=0}^{n-1}\delta_{T^i x}$ be the uniform measure supported on $\{T^i x\}_{i=0}^{n-1}$. Let $V_T(x)$ be the set of all limit points of $\{\mu_{n, x}\}_{n\geq 1}$ in $\mathcal{M}(X)$. Then $V_T(x)\subset\mathcal{M}(X, T)$. We say that $x$ is a \emph{$\mu$-generic point} for some $\mu\in\mathcal{M}(X)$ if $V_T(x)=\{\mu\}$.

\begin{thm}\label{Bowen}\emph{(\cite[Bowen's lemma]{Bow73})}
  Let $T: X\rightarrow X$ be a continuous map on a compact metric space. Set
\begin{equation*}
  QR(t)=\{x\in X: \exists\mu\in V_T(x)\;\text{with}\; h(\mu, T)\leq t\},
\end{equation*}
then $h_{top}(T, QR(t))\leq t$. Here $h_{top}(T, \cdot)$ means the topology entropy of subsets in Bowen's sense.
\end{thm}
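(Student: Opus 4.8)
The plan is to estimate Bowen's topological entropy of $QR(t)$ directly from its Carath\'eodory-type definition. Recall that for $Z\subseteq X$, $\lambda\ge 0$, $\epsilon>0$ and $N\ge 1$ one sets $m(Z,\lambda,N,\epsilon)=\inf\sum_i e^{-\lambda n_i}$, the infimum over all countable covers of $Z$ by Bowen balls $B_{n_i}(x_i,\epsilon)=\{y:d(T^j x_i,T^j y)<\epsilon,\ 0\le j<n_i\}$ with all $n_i\ge N$; then $m(Z,\lambda,\epsilon)=\lim_{N\to\infty}m(Z,\lambda,N,\epsilon)$, the quantity $h_{top}(T,Z,\epsilon)$ is the unique $\lambda$ at which $m(Z,\lambda,\epsilon)$ drops from $\infty$ to $0$, and $h_{top}(T,Z)=\lim_{\epsilon\to 0}h_{top}(T,Z,\epsilon)$. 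So, fixing $\epsilon>0$ and $\delta>0$, it suffices to produce for every $N$ a cover of $QR(t)$ by Bowen balls $B_n(\cdot,\epsilon)$ with $n\ge N$ whose total weight $\sum e^{-(t+2\delta)n}$ tends to $0$ as $N\to\infty$; this forces $h_{top}(T,QR(t),\epsilon)\le t+2\delta$, and then $\delta\to 0$, $\epsilon\to 0$ complete the proof. Throughout I will use the countable stability $h_{top}(T,\bigcup_i Z_i)=\sup_i h_{top}(T,Z_i)$ of Bowen entropy to localise auxiliary parameters.

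The core of the argument is an entropy (``method of types'') count at the level of a finite partition. Fix a finite Borel partition $\mathcal P$ of $X$ with each atom of diameter $<\epsilon$, so that every cell of $\mathcal P^n=\bigvee_{i=0}^{n-1}T^{-i}\mathcal P$ is contained in a Bowen ball $B_n(\cdot,\epsilon)$. For $x\in QR(t)$, fix $\mu_x\in V_T(x)$ with $h(\mu_x,T)\le t$ and a subsequence $n_j\to\infty$ with $\mu_{n_j,x}\to\mu_x$ weak-$*$; since $h(\mu_x,T,\mathcal P)\le h(\mu_x,T)\le t$ and $\tfrac1kH(\mu_x,\mathcal P^k)\downarrow h(\mu_x,T,\mathcal P)$ as $k\to\infty$, fix $k$ with $\tfrac1kH(\mu_x,\mathcal P^k)<t+\delta$. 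To each cell of $\mathcal P^n$ attach its empirical $k$-block distribution, i.e.\ the frequency over $0\le i\le n-k$ with which $T^i(\cdot)$ visits each $\mathcal P^k$-cell. The count I need is: the number of $\mathcal P^n$-cells whose $k$-block empirical distribution is within $\eta$ (total variation over $\mathcal P^k$-cells) of the $\mathcal P^k$-marginal of $\mu_x$ is at most $e^{(t+2\delta)n}$ once $\eta$ is small enough and $n$ large enough. This is the standard types estimate: a word of length $n$ with a prescribed, approximately shift-invariant $k$-block frequency vector $\nu$ can be built in at most $\exp(n(H_k(\nu)-H_{k-1}(\nu)+o(1)))$ ways; the successive conditional entropies $H_j(\nu)-H_{j-1}(\nu)$ of a stationary law are non-increasing in $j$, so their average over $1\le j\le k$ dominates the last one, giving $H_k(\nu)-H_{k-1}(\nu)\le\tfrac1kH_k(\nu)$; and $\nu\mapsto H_k(\nu)$ is continuous, so proximity of $\nu$ to the $\mathcal P^k$-marginal of $\mu_x$ keeps $\tfrac1kH_k(\nu)$ close to $\tfrac1kH(\mu_x,\mathcal P^k)<t+\delta$. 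In the other direction, $\mu_{n_j,x}\to\mu_x$ forces the $k$-block empirical distribution of $x$ at time $n_j$ to converge to the $\mathcal P^k$-marginal of $\mu_x$, so for all large $j$ the cell of $\mathcal P^{n_j}$ containing $x$ lies in this ``good'' family.

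For the global bookkeeping, use countable stability to decompose $QR(t)$ into a countable union of pieces $Q=Q(\mathcal P,k,\eta,N)$, where $\mathcal P$ ranges over a fixed countable list of finite partitions of diameter $<\epsilon$, and $Q$ collects those $x$ for which the data $(\mathcal P,k,\eta,N)$ ``work'' as in the previous paragraph. Fix such a piece $Q$. For each $n\ge N$, let $\mathcal W_n$ be the family of $\mathcal P^n$-cells whose $k$-block empirical distribution is within $\eta$ of the $\mathcal P^k$-marginal of some $T$-invariant measure $\mu$ with $\tfrac1kH(\mu,\mathcal P^k)<t+\delta$; covering the precompact set of such marginals by finitely many $\eta/2$-balls and summing the type estimate over them gives $\#\mathcal W_n\le e^{(t+2\delta)n}$ up to a subexponential factor. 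Every $x\in Q$ lies in a cell of $\mathcal W_n$ for infinitely many $n\ge N$, and such a cell is contained in $B_n(x,\epsilon)$; hence $\{B_n(x_C,\epsilon): n\ge N,\ C\in\mathcal W_n,\ x_C\in C\}$ is a cover of $Q$ and, absorbing the subexponential factor into $\delta$,
\[
  m(Q,\,t+3\delta,\,N,\,\epsilon)\ \le\ \sum_{n\ge N}\#\mathcal W_n\cdot e^{-(t+3\delta)n}\ \le\ \sum_{n\ge N}e^{-\delta n},
\]
which tends to $0$ as $N\to\infty$. Thus $h_{top}(T,Q,\epsilon)\le t+3\delta$ for every piece, hence $h_{top}(T,QR(t),\epsilon)\le t+3\delta$, and letting $\delta\to 0$ then $\epsilon\to 0$ yields $h_{top}(T,QR(t))\le t$.

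Two points will need care. The first is the method-of-types count itself: bounding the number of length-$n$ words with prescribed approximate $k$-block statistics by $\exp(n(\tfrac1kH_k(\nu)+o(1)))$ and tying $\tfrac1kH(\mu_x,\mathcal P^k)$ to $h(\mu_x,T)$ through the monotone convergence $\tfrac1kH(\mu,\mathcal P^k)\downarrow h(\mu,T,\mathcal P)\le h(\mu,T)$ --- this is routine but is where the content lies. The second, which I expect to be the most delicate piece of the write-up, is the passage from the weak-$*$ convergence $\mu_{n_j,x}\to\mu_x$ to convergence of the $k$-block empirical distributions: this needs $\mu_x(\partial P)=0$ for all $P\in\mathcal P$ (hence, by $T$-invariance, $\mu_x$-null boundaries for $\mathcal P^k$-cells) so that the portmanteau theorem applies, and it is handled by perturbing $\mathcal P$ to have null boundaries for the measure at hand and appealing to the countable stability of Bowen entropy to get away with a countable list of partitions. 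Everything else is the Carath\'eodory-structure bookkeeping sketched above.
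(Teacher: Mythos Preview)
The paper does not prove this statement at all: Theorem~\ref{Bowen} is quoted with a citation to Bowen's original paper \cite{Bow73} and is used as a black box in the upper-bound part of the proof of Theorem~\ref{mainthm}. So there is no ``paper's own proof'' to compare your proposal against.

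That said, your sketch is essentially the standard proof of Bowen's lemma (close in spirit to Bowen's original argument): reduce to a finite partition, use a method-of-types count to bound the number of $\mathcal P^n$-cells whose $k$-block empirical distribution is near a low-entropy invariant law by $e^{n(t+O(\delta))}$, cover $QR(t)$ by such cells via the weak-$*$ convergence $\mu_{n_j,x}\to\mu_x$, and conclude through the Carath\'eodory definition of $h_{top}$. The two points you flag as delicate are exactly the right ones. For the types bound, the chain $H_k(\nu)-H_{k-1}(\nu)\le \tfrac1k H_k(\nu)$ via monotonicity of conditional entropies is correct, but make sure you justify that the empirical $k$-block distribution of a single word is close to being shift-invariant (it is, up to $O(k/n)$), since that monotonicity is a property of stationary laws. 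For the boundary issue, your plan to pass to a countable family of partitions with $\mu$-null boundaries and invoke countable stability of Bowen entropy is the standard device; be explicit about how a single countable list of partitions suffices for \emph{all} limit measures $\mu_x$ simultaneously (e.g.\ partitions cut by level sets $\{f=c\}$ of finitely many continuous functions, with $c$ ranging over a countable dense set).
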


\subsection{Specification properties and measures of maximal dimension}
\label{specification}
The specification property, first introduced by Bowen \cite{Bow74} to study the existence and uniqueness of equilibrium states for some expansive dynamical systems, could be viewed as a quantitative version of topological mixing. It takes a simple form for subshifts.
\begin{defn}
   Let $\Sigma$ be a subshift and $\mathcal{L}$ be its language.
\begin{enumerate}
\item A subcollection $\mathcal{G}\subseteq\mathcal{L}$ is said to satisfy the specification property if there exists $\tau\in\mathbb{N}$ such that for all $k\geq 1$ and $\textbf{i}_1, \ldots, \textbf{i}_k\in\mathcal{G}$, there exist  $\textbf{j}_1,\ldots, \textbf{j}_{k-1}\in\mathcal{L}_{\tau}$ such that $\textbf{i}_1\textbf{j}_1\textbf{i}_2\textbf{j}_2\cdots\textbf{j}_{k-1}\textbf{i}_k\in\mathcal{L}$. The constant $\tau$ is called the gap size.


\item $\Sigma$ is said to have the specification property if so does its language $\mathcal{L}$.
\end{enumerate}
\end{defn}

For subshifts with specification property, Gatzouras and Peres \cite{GP97} proved the following result concerning measures of maximal dimension.
\begin{thm}[\cite{GP97}, Theorem 2.1]\label{GP}
  Let $\Sigma$ be a subshift and $\varphi: \Sigma\rightarrow\mathbb{R}_+$ be a strictly positive continuous function. Endow $\Sigma$ with the metric $d_{\varphi}$ as above. Then
  \begin{enumerate}
  \item there exists a $\sigma$-invariant ergodic probability measure $\mu$ on $\Sigma$ with $\dim_{\rm H}^{\varphi}\mu=\dim_{\rm H}^{\varphi}\Sigma$;
  \item the $\varphi$-Hausdorff dimension $\gamma=\dim_{\rm H}^{\varphi}\Sigma$ and $\varphi$-Box dimension $\dim_{\rm B}^{\varphi}\Sigma$ coincide. Moreover, $\gamma$ satisfies the equation $P(\sigma, -\gamma\varphi)=0$;
  \item if $\Sigma$ satisfies the specification property and $\varphi$ satisfies the Dini condition $\sum_{n\geq 1}\text{Var}_n\varphi<\infty$, then $$0<\mathcal{H}_{\varphi}^{\gamma}(\Sigma)<\infty$$ and the measure in (1) is unique among all ergodic measures. Moreover, $\mu$ has the following Gibbs property: there exists a constant $c>1$ such that for every $n\geq 1$ and $w\in\Sigma$, we have
  \begin{equation*}
      c^{-1} e^{-\gamma S_n\varphi(w)}\leq\mu([w|^n_1])\leq c e^{-\gamma S_n\varphi(w)}.
  \end{equation*}
\end{enumerate}
\end{thm}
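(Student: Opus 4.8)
The plan is to realise $\gamma=\dim^\varphi_{\rm H}\Sigma$ as the unique zero of the pressure function, to obtain the box-dimension upper bound by an explicit covering argument, to obtain the Hausdorff-dimension lower bound from an ergodic equilibrium state, and then, under the specification hypothesis, to upgrade to the quantitative statements in (3) via thermodynamic formalism. First I set $p(s)=P(\sigma,-s\varphi)$; since $\varphi$ is continuous and strictly positive on the compact space $\Sigma$ and $\min\varphi>0$, one has $Z_n(\sigma,\mathcal L,-(s+t)\varphi)\le e^{-tn\min\varphi}Z_n(\sigma,\mathcal L,-s\varphi)$, so $p(s+t)\le p(s)-t\min\varphi$: $p$ is continuous and strictly decreasing, $p(0)=h_{top}(\sigma,\Sigma)\ge0$, and $p(s)\to-\infty$, hence there is a unique $\gamma\ge0$ with $p(\gamma)=0$. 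I will show $\gamma=\dim^\varphi_{\rm H}\Sigma=\dim^\varphi_{\rm B}\Sigma$, which yields (2). Note that, straight from the definitions, $Z_n(\sigma,\mathcal L,-s\varphi)=\sum_{\mathbf i\in\mathcal L_n}|[\mathbf i]|_\varphi^{\,s}$, and since $S_{m+n}\varphi(w)=S_m\varphi(w)+S_n\varphi(\sigma^m w)$ gives $S^*_{m+n}\varphi(\mathbf{ij})\ge S^*_m\varphi(\mathbf i)+S^*_n\varphi(\mathbf j)$, i.e.\ $|[\mathbf{ij}]|_\varphi\le|[\mathbf i]|_\varphi\,|[\mathbf j]|_\varphi$, the sequence $n\mapsto\log Z_n(\sigma,\mathcal L,-s\varphi)$ is subadditive; in particular, for every $s>\gamma$, $Z_n(\sigma,\mathcal L,-s\varphi)$ decays exponentially in $n$.

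For the box dimension, fix small $\epsilon>0$ and let $\mathcal S_\epsilon$ be the finite family of $\epsilon$-stopping cylinders, $[\mathbf i]$ with $|[\mathbf i]|_\varphi\le\epsilon<|[\mathbf i^-]|_\varphi$: these are pairwise disjoint balls of radius $\le\epsilon$ covering $\Sigma$, so $N_\epsilon(\Sigma)\le\#\mathcal S_\epsilon$, every such $[\mathbf i]$ has length of order $\log(1/\epsilon)$, and (using $\sum_k\text{Var}_k\varphi<\infty$ to compare $|[\mathbf i]|_\varphi$ with $|[\mathbf i^-]|_\varphi$, or, with only uniform continuity, losing a harmless subexponential factor $e^{o(|\mathbf i|)}$) $|[\mathbf i]|_\varphi$ is comparable to $\epsilon$. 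Grouping $\mathcal S_\epsilon$ by length and using $\sum_{|\mathbf i|=n,\,[\mathbf i]\in\mathcal S_\epsilon}|[\mathbf i]|_\varphi^{\,s}\le Z_n(\sigma,\mathcal L,-s\varphi)$, we get, for $s>\gamma$, $\#\mathcal S_\epsilon\cdot\epsilon^{s}\lesssim\sum_{[\mathbf i]\in\mathcal S_\epsilon}|[\mathbf i]|_\varphi^{\,s}\le\sum_{n}Z_n(\sigma,\mathcal L,-s\varphi)$, where $n$ runs over the $O(\log(1/\epsilon))$ admissible lengths; since each term decays exponentially in $n\gtrsim\log(1/\epsilon)$, the right side tends to $0$, so $\overline{\dim}^\varphi_{\rm B}\Sigma\le s$, and letting $s\downarrow\gamma$ gives $\overline{\dim}^\varphi_{\rm B}\Sigma\le\gamma$. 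For the matching lower bound I use that a finite-alphabet subshift is expansive, so $\mu\mapsto h(\mu,\sigma)$ is upper semicontinuous on the compact set $\mathcal M(\Sigma,\sigma)$ and the variational principle for $-\gamma\varphi$ is attained at some $\mu_0$; by affinity of entropy and of $\mu\mapsto\int\varphi\,d\mu$ and the variational principle, $\mathbb P$-a.e.\ ergodic component $\mu$ of $\mu_0$ again satisfies $h(\mu,\sigma)-\gamma\int\varphi\,d\mu=P(\sigma,-\gamma\varphi)=0$. For such an ergodic $\mu$, Lemma \ref{Youngformula} gives $\dim^\varphi_{\rm H}\mu=h(\mu,\sigma)/\!\int\varphi\,d\mu=\gamma$ (note $\int\varphi\,d\mu\ge\min\varphi>0$), so $\gamma=\dim^\varphi_{\rm H}\mu\le\dim^\varphi_{\rm H}\Sigma\le\underline{\dim}^\varphi_{\rm B}\Sigma\le\overline{\dim}^\varphi_{\rm B}\Sigma\le\gamma$. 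This proves (1) and (2).

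Now assume $\Sigma$ has the specification property with gap size $\tau$ and $\sum_k\text{Var}_k\varphi=:V<\infty$. Gluing an arbitrary $\mathbf i\in\mathcal L_m$ to an arbitrary $\mathbf j\in\mathcal L_n$ by a connector of length $\tau$ (the assignment $(\mathbf i,\mathbf j)\mapsto\mathbf i\,(\text{connector})\,\mathbf j$ is injective, since the two blocks are recovered from the resulting word) and estimating the $S^*$-sum of the concatenation via $|S_k\varphi-S^*_k\varphi|\le V$ yields the reverse inequality $Z_{m+\tau+n}(\sigma,\mathcal L,-\gamma\varphi)\ge e^{-\gamma(\tau\max\varphi+2V)}Z_m(\sigma,\mathcal L,-\gamma\varphi)Z_n(\sigma,\mathcal L,-\gamma\varphi)$. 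Combining this with the subadditivity of the first paragraph through Fekete's lemma and using $P(\sigma,-\gamma\varphi)=0$ pins the sequence down: there is $c>1$ with $c^{-1}\le Z_n(\sigma,\mathcal L,-\gamma\varphi)\le c$ for all $n$. Covering $\Sigma$ by all $n$-cylinders then gives $\mathcal H^\gamma_\varphi(\Sigma)\le\sup_n Z_n(\sigma,\mathcal L,-\gamma\varphi)\le c<\infty$. For $\mathcal H^\gamma_\varphi(\Sigma)>0$ I construct the Gibbs measure $\mu$ as a weak-$*$ limit point (as $n\to\infty$) of the Ces\`aro averages $N^{-1}\sum_{k=0}^{N-1}\sigma^k_*\nu_n$ of $\nu_n=Z_n(\sigma,\mathcal L,-\gamma\varphi)^{-1}\sum_{\mathbf i\in\mathcal L_n}e^{-\gamma S^*_n\varphi(\mathbf i)}\delta_{x_{\mathbf i}}$, $x_{\mathbf i}\in[\mathbf i]$; using the two-sided bound on $Z_n$, submultiplicativity, and specification (to exhibit, for a fixed word, all sufficiently long words containing it at a prescribed position) one checks that $\mu\in\mathcal M(\Sigma,\sigma)$ and $c'^{-1}e^{-\gamma S_n\varphi(w)}\le\mu([w|_1^n])\le c'e^{-\gamma S_n\varphi(w)}$ for all $w$ and $n$. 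The mass distribution principle applied to $\mu$ gives $\mathcal H^\gamma_\varphi(\Sigma)\gtrsim\mu(\Sigma)>0$. The Gibbs bounds force $h(\mu,\sigma)=\gamma\int\varphi\,d\mu$, so $\dim^\varphi_{\rm H}\mu=\gamma$ by Lemma \ref{Youngformula} and $\mu$ is the measure of (1); and any ergodic $\nu$ with $\dim^\varphi_{\rm H}\nu=\gamma$ satisfies $h(\nu,\sigma)-\gamma\int\varphi\,d\nu=0$, i.e.\ is an equilibrium state for $-\gamma\varphi$, which for a subshift with specification and summable-variation potential is unique (Bowen); hence $\nu=\mu$. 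This gives (3).

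The first three paragraphs are soft; the real work lies in the last one. The crux is to produce the Gibbs measure and to verify the \emph{uniform} two-sided estimates $\mu([w|_1^n])\asymp e^{-\gamma S_n\varphi(w)}$ for all cylinders simultaneously (the $\mu$-a.e.\ version is automatic from Shannon--McMillan--Breiman and Birkhoff's theorem, but that is not enough), together with uniqueness of the equilibrium state. This is precisely where the specification property is indispensable: it turns the abstract identity $P(\sigma,-\gamma\varphi)=0$ into genuine super-multiplicativity of the partition sums $Z_n(\sigma,\mathcal L,-\gamma\varphi)$, which in turn delivers the distortion bounds, while the Dini condition $\sum_k\text{Var}_k\varphi<\infty$ keeps all the gluing estimates uniform.
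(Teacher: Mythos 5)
The paper does not prove this statement at all: it is quoted verbatim from Gatzouras--Peres \cite{GP97} (Theorem 2.1 there) and used as a black box, so there is no in-paper argument to compare against. Your reconstruction follows essentially the Gatzouras--Peres/Bowen route and is correct in outline. Parts (1) and (2) are complete: the root $\gamma$ of $s\mapsto P(\sigma,-s\varphi)$ exists and is unique because $\varphi\geq\min\varphi>0$ forces strict decrease; the stopping-cylinder covering (the same device as the collection $\Lambda_\rho$ used later in the paper) together with exponential decay of $Z_n(\sigma,\mathcal L,-s\varphi)$ for $s>\gamma$ gives $\overline{\dim}{}^{\varphi}_{\rm B}\Sigma\leq\gamma$; and expansiveness plus the variational principle plus Lemma \ref{Youngformula} applied to an ergodic component of an equilibrium state gives the matching lower bound. (Using Lemma \ref{Youngformula} is not circular: that dimension formula for ergodic measures is independent of the statement being proved.) In part (3) your super-multiplicativity estimate $Z_{m+\tau+n}\geq e^{-\gamma(\tau\max\varphi+2V)}Z_mZ_n$ is right, and combined with subadditivity and $P(\sigma,-\gamma\varphi)=0$ it does pin down $1\leq Z_n\leq c$, from which $\mathcal H^{\gamma}_{\varphi}(\Sigma)<\infty$ follows by covering with $n$-cylinders. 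The one place where you assert rather than prove is the two-sided Gibbs estimate for the weak-$*$ limit $\mu$ of the Ces\`aro averages; this is indeed the technical heart, but the mechanism you name is the correct one (for the upper bound, $\sum_{\mathbf i\supset\mathbf j\text{ at position }k}|[\mathbf i]|_\varphi^{\gamma}\leq Z_kZ_{n-k-m}|[\mathbf j]|_\varphi^{\gamma}\leq c^2|[\mathbf j]|_\varphi^{\gamma}$ by submultiplicativity of diameters; for the lower bound, specification injectively glues each pair in $\mathcal L_{k-\tau}\times\mathcal L_{n-k-m-\tau}$ around $\mathbf j$, and $Z_j\geq1$ closes the estimate), so the sketch genuinely goes through. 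Two small points you should make explicit: ergodicity of the limit Gibbs measure is needed before applying Lemma \ref{Youngformula} to it (it follows from the Bowen uniqueness you invoke, since the unique equilibrium state is automatically ergodic), and the mass distribution principle applies because in $(\Sigma,d_\varphi)$ every ball is a cylinder.
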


\begin{rem}\label{LYtopFormula}
By a property of topological pressure \emph{(}see \cite[Theorem 9.7]{Wa82}\emph{)},
\begin{equation*}
   h_{top}(\sigma, \Sigma)-\gamma\max_{w\in\Sigma}\varphi(w)\leq P(\sigma, -\gamma\varphi)\leq h_{top}(\sigma, \Sigma)-\gamma\min_{w\in\Sigma}\varphi(w).
\end{equation*}
Thus, Theorem \ref{GP} (2) implies that for any subshift $\Sigma$ and any strictly positive continuous function $\varphi: \Sigma\rightarrow\mathbb{R}_+$, we have
\begin{equation}\label{LY}
   \frac{h_{top}(\sigma, \Sigma)}{\max_{w\in\Sigma}\varphi(w)}\leq\dim_{\rm
 H}^{\varphi}\Sigma\leq\frac{h_{top}(\sigma, \Sigma)}{\min_{w\in\Sigma}\varphi(w)}
\end{equation}
\end{rem}

In recent years, various weak versions of specification property were introduced to study the uniqueness of equilibrium states of dynamical systems along Bowen's approach, see the survey paper \cite{CT21} and references therein for more details. In this paper, we adopt a version of weak specification property from \cite[Section 2]{CT13}, which is satisfied for a large class of symbolic dynamics, including the $\beta$-shifts, S-gap shifts and more general coded systems (we refer to \cite{CT12, CT21} for more examples). We assume that there exist collections of words $\mathcal{C}^p, \mathcal{G}, \mathcal{C}^s\subseteq\mathcal{L}$ such that $\mathcal{L}=\mathcal{C}^p\mathcal{G} \mathcal{C}^s$, that is, every word in $\mathcal{L}$ can be written as a concatenation of words in $\mathcal{C}^p, \mathcal{G}$ and $ \mathcal{C}^s$. For each $M\geq 0$, write
  \begin{equation*}
    \mathcal{G}(M):=\left\{\textbf{i}\textbf{j}\textbf{k}\in\mathcal{L}: \textbf{i}\in\mathcal{C}^p, \textbf{j}\in\mathcal{G}, \textbf{k}\in\mathcal{C}^s, |\textbf{i}|\leq M, |\textbf{k}|\leq M\right\}.
  \end{equation*}
Note that $\bigcup_M \mathcal{G}(M)=\mathcal{L}$, thus
$\{\mathcal{G}(M)\}_{M\geq 0}$ gives a filtration of $\mathcal{L}$. In this paper, for the sake of our use, we will extend Theorem \ref{GP} by weakening the specification property of $\Sigma$. Since the first two items in Theorem \ref{GP} hold for any subshift and any strictly positive continuous function $\varphi$, we omit them in the following result.
\begin{thm}\label{extendGP}
    Let $\Sigma$ be a subshift and $\varphi$ be a strictly positive continuous function on $\Sigma$. Write $\gamma=\dim_{\rm H}^{\varphi}\Sigma=\dim_{\rm B}^{\varphi}\Sigma$.
    Suppose that there exists a decomposition $\mathcal{L}=\mathcal{C}^p\mathcal{G} \mathcal{C}^s$ satisfying:
    \begin{enumerate}[(i)]
    \item $\mathcal{G}$ has the specification property;
    \item $\varphi$ satisfies the Dini condition $\sum_{n\geq 1}\text{Var}_n\varphi<\infty$;
    \item the pressure gap condition, that is, $\sum_{n\geq 1} Z_n(\sigma, \mathcal{C}^p\cup\mathcal{C}^s, -\gamma\varphi)<\infty$.
\end{enumerate}
Then there exists a unique $\sigma$-invariant ergodic probability measure $\mu$ with $\dim_{\rm H}^{\varphi}\mu=\dim_{\rm H}^{\varphi}\Sigma$. Moreover, $\mu$ has the the following weak Gibbs property: there exist constants $c_M, c>0$ such that for every $n\geq 1$ and $\textbf{i}\in\mathcal{G}(M)_n$, we have
\begin{equation}\label{gibbs}
    c_M e^{-\gamma S_n\varphi(w)}\leq\mu([\textbf{i}])\leq c e^{-\gamma S_n\varphi(w)}, \forall w\in [\textbf{i}].
\end{equation}
\end{thm}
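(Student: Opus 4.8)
The plan is to identify, for an ergodic $\sigma$-invariant measure $\mu$, the equality $\dim^\varphi_{\rm H}\mu=\dim^\varphi_{\rm H}\Sigma$ with $\mu$ being an equilibrium state for $\phi:=-\gamma\varphi$, and then to produce and characterise that equilibrium state by running the Climenhaga--Thompson machinery attached to the decomposition $\mathcal{L}=\mathcal{C}^p\mathcal{G}\mathcal{C}^s$, keeping track of the bounds in~\eqref{gibbs}. For the reduction: Lemma~\ref{Youngformula} gives $\dim^\varphi_{\rm H}\mu=h(\mu,\sigma)/\int\varphi\,d\mu$ for ergodic $\mu$, and Theorem~\ref{GP}(2) gives $P(\sigma,-\gamma\varphi)=0$; since $\mu(\Sigma)=1$ forces $\dim^\varphi_{\rm H}\mu\le\dim^\varphi_{\rm H}\Sigma=\gamma$ and $\int\varphi\,d\mu>0$, we obtain $\dim^\varphi_{\rm H}\mu=\gamma$ precisely when $h(\mu,\sigma)+\int\phi\,d\mu=0=P(\sigma,\phi)$, i.e.\ precisely when $\mu$ is an equilibrium state for $\phi$. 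Thus it suffices to show that $\phi$ admits a unique equilibrium state, that it is ergodic, and that it satisfies~\eqref{gibbs}. Two reformulations of the hypotheses will be used throughout: (ii) gives $0\le S_n\varphi(w)-S_n^*\varphi(w)\le\sum_{i\ge1}\text{Var}_i\varphi<\infty$, hence the Bowen bounded-distortion property $|S_n\phi(w)-S_n\phi(w')|\lesssim 1$ whenever $w|_1^n=w'|_1^n$; and (iii), being a summability statement, is strictly stronger than the pressure gap $P(\sigma,\mathcal{C}^p\cup\mathcal{C}^s,\phi)<0=P(\sigma,\phi)$.

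I would first build a candidate $\mu$. Gluing $\mathcal{G}$-words via the specification property of $\mathcal{G}$ (gap $\tau$) and invoking the Bowen property, the sequence $Z_n(\sigma,\mathcal{G},\phi)$ is supermultiplicative up to a fixed multiplicative constant, so $\tfrac1n\log Z_n(\sigma,\mathcal{G},\phi)$ converges; combining $\mathcal{L}=\mathcal{C}^p\mathcal{G}\mathcal{C}^s$ with the finiteness of the prefix and suffix partition sums coming from (iii) shows that this limit equals $P(\sigma,\phi)=0$. Let $\mu$ be a weak-$*$ limit point of the empirical averages $\tfrac1n\sum_{k=0}^{n-1}\mu_n\circ\sigma^{-k}$, where $\mu_n$ distributes mass $\propto\sup_{[\textbf{i}]}e^{S_n\phi}$ over the cylinders $[\textbf{i}]$, $\textbf{i}\in\mathcal{G}_n$. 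Since the free energies of the $\mu_n$ tend to $0$, the Misiurewicz argument from the proof of the variational principle gives $h(\mu,\sigma)+\int\phi\,d\mu\ge0$, and the reverse is automatic, so $\mu$ is an equilibrium state; a standard argument from the resulting Gibbs structure and the specification of $\mathcal{G}$ shows $\mu$ is ergodic. For~\eqref{gibbs}: the lower bound holds because any $\textbf{i}\in\mathcal{G}(M)_n$ reads $\textbf{i}=\textbf{u}\textbf{g}\textbf{v}$ with $\textbf{u}\in\mathcal{C}^p$, $\textbf{g}\in\mathcal{G}$, $\textbf{v}\in\mathcal{C}^s$ and $|\textbf{u}|,|\textbf{v}|\le M$, and $\textbf{g}$ can be inserted into arbitrarily long glued $\mathcal{G}$-words; tracking the mass gives $\mu([\textbf{i}])\ge c_Me^{-\gamma S_n\varphi(w)}$ with a constant that necessarily degrades as $M$ grows, since it must absorb the maximal prefix/suffix length and the gap words. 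The upper bound $\mu([\textbf{i}])\le ce^{-\gamma S_n\varphi(w)}$, with $c$ independent of $M$, follows by dominating $\mu([\textbf{i}])$ by a sum over all admissible decompositions $\textbf{i}=\textbf{u}\textbf{g}\textbf{v}$, the summable pressure gap in (iii) bounding the total overhead uniformly.

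For uniqueness I would run Bowen's scheme in the Climenhaga--Thompson form \cite{CT13,CT21}. Let $\nu$ be any ergodic equilibrium state for $\phi$. The crux is that $\nu$ cannot be carried by the obstruction: using (iii) one shows there is a fixed $M$ such that for $\nu$-a.e.\ $w$ the prefixes $w|_1^n$ lie in $\mathcal{G}(M)$ along a set of $n$ of positive density --- were it otherwise, the collection $\mathcal{C}^p\cup\mathcal{C}^s$ would have to carry the entropy and integral of $\nu$, contradicting $P(\sigma,\mathcal{C}^p\cup\mathcal{C}^s,\phi)<0=h(\nu,\sigma)+\int\phi\,d\nu$. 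Along such $n$, the Shannon--McMillan--Breiman theorem gives $-\tfrac1n\log\nu([w|_1^n])\to h(\nu,\sigma)=\gamma\int\varphi\,d\nu=\lim_n\gamma S_n\varphi(w)/n$, while the weak Gibbs bounds for $\mu$ on $\mathcal{G}(M)_n$ give $-\tfrac1n\log\mu([w|_1^n])=\gamma S_n\varphi(w)/n+o(1)$; feeding the comparison into the usual density argument yields $\nu\ll\mu$, and then ergodicity of both measures forces $\nu=\mu$. Combined with the fact that any equilibrium state is an average of ergodic ones, which are themselves equilibrium states, this gives genuine uniqueness.

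I expect the decisive obstacle to be exactly the crux of the last paragraph: showing that every equilibrium state for $-\gamma\varphi$ spends a positive fraction of time inside $\mathcal{G}(M)$ for some fixed $M$, equivalently that $\mathcal{C}^p\cup\mathcal{C}^s$ is negligible for equilibrium states. This is where conditions (i) and (iii) are genuinely used in tandem and where the Climenhaga--Thompson bookkeeping is most delicate. A secondary, more routine, difficulty is obtaining~\eqref{gibbs} with the asserted dependence of the constants on $M$: both the $M$-degradation of $c_M$ and the $M$-independence of $c$ rely on the summable form of (iii) rather than on the bare pressure inequality, and verifying this requires care in the gluing and counting estimates.
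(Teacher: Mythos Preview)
The paper does not give a self-contained proof of this theorem; immediately after stating it, the authors simply cite the literature: uniqueness and the weak Gibbs property are attributed to \cite[Theorem~C]{CT13} (stated there under the stronger hypothesis that every $\mathcal{G}(M)$ has specification), with the relaxation to specification on $\mathcal{G}$ alone deferred to \cite{PYY22} and the exposition \cite{Cli22}. Your proposal is a faithful sketch of precisely the argument contained in those references: the reduction of ``maximal $\varphi$-dimension'' to ``equilibrium state for $-\gamma\varphi$'' via $P(\sigma,-\gamma\varphi)=0$ and Lemma~\ref{Youngformula}, the construction of the candidate measure as a weak-$*$ limit of $\mathcal{G}$-weighted empirical measures, the Gibbs bounds from gluing and the summable pressure gap, and the uniqueness step via positive-density visits of equilibrium states to $\mathcal{G}(M)$. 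So there is no genuine difference in approach --- you have unrolled the black boxes that the paper invokes --- and your identification of the crux (that every ergodic equilibrium state must see $\mathcal{G}(M)$ with positive density for some fixed $M$) is exactly the point where \cite{PYY22}/\cite{Cli22} improve on the original \cite{CT13} argument.
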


\begin{rem}\label{Uniformgibbs}
    Note that $\mathcal{G}=\mathcal{G}(0)$, thus \eqref{gibbs} tells that there exists $c>0$ such that for every $n\geq 1$ and $\textbf{i}\in\mathcal{G}_n$,
\begin{equation}
    c^{-1} e^{-\gamma S_n\varphi(w)}\leq\mu([\textbf{i}])\leq c e^{-\gamma S_n\varphi(w)}, \forall w\in [\textbf{i}].
\end{equation}
In the proof of Theorem \ref{extendGP}, we have the following uniform counting result \emph{(}see \cite[Proposition 5.3]{CT13}\emph{)}: for any potential function $\phi\in C(\Sigma)$ satisfying Dini condition, there exists $C>1$ such that
\begin{equation}\label{Uniformcount}
     e^{n P(\sigma, \phi)}\leq Z_n(\sigma, \mathcal{L}, \phi) \leq C e^{n P(\sigma, \phi)}, \;\forall\; n\geq 1.
\end{equation}
In particular, we have
\begin{equation}\label{Uniformcount2}
    1\leq Z_n(\sigma, \mathcal{L}, -\gamma\varphi) \leq C, \;\forall\; n\geq 1.
\end{equation}
\end{rem}

For the proof of Theorem \ref{extendGP}, the uniqueness and weak Gibbs property follows from \cite[Theorem C]{CT13}, where Condition (i) was replaced by a stronger assumption that $\mathcal{G}(M)$ satisfies the specification property for every $M\geq 0$. It was extended recently in \cite{PYY22} by only assuming the specification property of $\mathcal{G}$. Although the results in \cite{PYY22} were proved for flows rather than symbolic dynamics, the arguments there still work for subshifts, see \cite{Cli22} for an excellent exposition about this.


\section{Proof of Theorem \ref{int-entropy}, \ref{mainthm} and \ref{intermediate-dimension}}
\subsection{Proof of Theorem \ref{int-entropy}}
 By the assumptions, the language $\mathcal{L}$ of $\Sigma$ admits a decomposition $\mathcal{L}=\mathcal{C}^p\mathcal{G}\mathcal{C}^s$, where $\mathcal{G}$ has the specification property with gap size $\tau\in\mathbb{N}$ and $h_{top}(\mathcal{C}^p\cup\mathcal{C}^s)<h_{top}(\sigma, \Sigma)$. The next lemma reveals that, under our assumptions, the topological entropy of $\Sigma$ is determined by the exponential growth rate of $\mathcal{G}$.
\begin{lem}\label{goodcore}
    Assume that $(\Sigma, \sigma)$ and the subcollections $\mathcal{C}^p, \mathcal{G}, \mathcal{C}^s$ in the decomposition $\mathcal{L}=\mathcal{C}^p\mathcal{G}\mathcal{C}^s$ satisfy the conditions in Theorem \ref{int-entropy}, then we have
\begin{equation}\label{G-rate}
   h_{top}(\mathcal{G}):=\limsup_{n\rightarrow\infty}\frac{1}{n}\log\sharp\mathcal{G}_n=h_{top}(\sigma, \Sigma)
\end{equation}
\end{lem}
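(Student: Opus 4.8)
The plan is to prove the two inequalities $h_{top}(\mathcal{G})\le h_{top}(\sigma,\Sigma)$ and $h_{top}(\sigma,\Sigma)\le h_{top}(\mathcal{G})$ separately. The first is immediate: since $\mathcal{G}\subseteq\mathcal{L}$ we have $\sharp\mathcal{G}_n\le\sharp\mathcal{L}_n$ for every $n$, whence $h_{top}(\mathcal{G})\le h_{top}(\sigma,\Sigma)$. The content is the reverse inequality, and the idea is that the decomposition $\mathcal{L}=\mathcal{C}^p\mathcal{G}\mathcal{C}^s$ forces $\sharp\mathcal{L}_n$ to be controlled, up to a sub-exponential factor, by the product of the word counts of $\mathcal{C}^p$, $\mathcal{G}$ and $\mathcal{C}^s$; the entropy gap condition then prevents the cores $\mathcal{C}^p,\mathcal{C}^s$ from carrying the full entropy, so that all of it must come from $\mathcal{G}$.

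Concretely, I would first observe that the concatenation map $(\textbf{i},\textbf{j},\textbf{k})\mapsto\textbf{i}\textbf{j}\textbf{k}$, defined on triples with $\textbf{i}\in\mathcal{C}^p$, $\textbf{j}\in\mathcal{G}$, $\textbf{k}\in\mathcal{C}^s$ and $|\textbf{i}|+|\textbf{j}|+|\textbf{k}|=n$, is a surjection onto $\mathcal{L}_n$, so that
\begin{equation*}
  \sharp\mathcal{L}_n\le\sum_{\substack{a+b+c=n\\ a,b,c\ge 0}}\sharp\mathcal{C}^p_a\cdot\sharp\mathcal{G}_b\cdot\sharp\mathcal{C}^s_c.
\end{equation*}
Fix $\varepsilon>0$ and put $g=h_{top}(\mathcal{G})$ and $c^*=h_{top}(\mathcal{C}^p\cup\mathcal{C}^s)$. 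From the definition of $h_{top}$ as a limsup of normalized logarithmic word counts one obtains constants $A,B\ge 1$ (depending on $\varepsilon$) with $\sharp\mathcal{G}_k\le A e^{(g+\varepsilon)k}$ and $\sharp\mathcal{C}^p_k,\sharp\mathcal{C}^s_k\le\sharp(\mathcal{C}^p\cup\mathcal{C}^s)_k\le B e^{(c^*+\varepsilon)k}$ for all $k\ge 0$ (the case $k=0$ being trivial since those counts are at most $1$). Since $(c^*+\varepsilon)(a+c)+(g+\varepsilon)b\le(\max\{c^*,g\}+\varepsilon)n$ whenever $a+b+c=n$, and there are at most $(n+1)^2$ such triples, this yields $\sharp\mathcal{L}_n\le A B^2 (n+1)^2 e^{(\max\{c^*,g\}+\varepsilon)n}$; taking $\frac1n\log$, letting $n\to\infty$ and then $\varepsilon\to 0$ gives $h_{top}(\sigma,\Sigma)\le\max\{c^*,g\}$.

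Finally I would invoke the entropy gap condition, namely $c^*=h_{top}(\mathcal{C}^p\cup\mathcal{C}^s)<h_{top}(\sigma,\Sigma)$: combined with $h_{top}(\sigma,\Sigma)\le\max\{c^*,g\}$ it forces $\max\{c^*,g\}=g$, i.e.\ $h_{top}(\sigma,\Sigma)\le h_{top}(\mathcal{G})$, which together with the trivial inequality proves \eqref{G-rate}. I do not expect a genuine obstacle here — in fact the specification property of $\mathcal{G}$ plays no role in this lemma; the only points requiring a little care are the bookkeeping around the empty-word convention in the decomposition $\mathcal{L}=\mathcal{C}^p\mathcal{G}\mathcal{C}^s$ and the routine observation that the number of ways of writing $n=a+b+c$ grows only polynomially in $n$.
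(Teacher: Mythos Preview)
Your proposal is correct and follows essentially the same approach as the paper: both proofs bound $\sharp\mathcal{L}_n$ by the triple sum $\sum_{a+b+c=n}\sharp\mathcal{C}^p_a\cdot\sharp\mathcal{G}_b\cdot\sharp\mathcal{C}^s_c$ coming from the decomposition, apply the exponential growth bounds for each factor, and invoke the entropy gap to conclude that $\mathcal{G}$ carries the full entropy. The only cosmetic difference is that the paper argues by contradiction (assuming $h_{top}(\mathcal{G})<h_{top}(\sigma,\Sigma)$ and deriving $\sharp\mathcal{L}_n\lesssim e^{n(h_0-\epsilon)}$, contradicting $\sharp\mathcal{L}_n\ge e^{nh_0}$), whereas you argue directly to $h_{top}(\sigma,\Sigma)\le\max\{c^*,g\}$; your observation that specification is not used here is also correct.
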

\begin{proof}
For convenience, we abbreviate $h_{top}(\sigma, \Sigma)$ as $h_0$ in the proof. Assume the assertion is not true, that is, $h_{top}(\mathcal{G})< h_0$. Then $h_{top}(\mathcal{G})< h_0-\epsilon_1$ holds for any sufficiently small $\epsilon_1>0$. Thus, there exists a constant $C_1>0$ such that for every $n\geq 1$,
\begin{equation}\label{CG1}
  \sharp\mathcal{G}_n\leq C_1 e^{n(h_0-\epsilon_1)}.
\end{equation}
Similarly, the entropy gap condition implies that, for any sufficiently small $\epsilon_2>0$, there exists a constant $C_2>0$ such that for every $n\geq 1$,
\begin{equation}\label{CPCS}
 \sharp(\mathcal{C}^p\cup\mathcal{C}^s)_n\leq C_2 e^{n(h_0-\epsilon_2)}.
\end{equation}
Since each $\textbf{i}\in\mathcal{L}$ has the decomposition $\textbf{i}=\textbf{i}^p\textbf{i}^g\textbf{i}^s$ with $\textbf{i}^p\in\mathcal{C}^p, \textbf{i}^g \in\mathcal{G}, \textbf{i}^s\in\mathcal{C}^s$. Thus,
$$\mathcal{L}_n=\bigcup_{0\leq k\leq n}\big\{\textbf{i}^p\textbf{i}^g\textbf{i}^s: \textbf{i}^p\in\mathcal{C}^p, \textbf{i}^s\in\mathcal{C}^s, \textbf{i}^g \in\mathcal{G}_{n-k}\big\}.$$
Fix $\epsilon_1$ and $\epsilon_2$ in \eqref{CG1} and \eqref{CPCS}, we have
\begin{align}\label{LG}
\sharp\mathcal{L}_n\leq
\sum_{0\leq k\leq n\atop 0\leq j\leq k} \sharp\mathcal{C}^p_j\cdot\sharp\mathcal{G}_{n-k}\cdot\sharp\mathcal{C}^s_{k-j}
\leq C_1 C_2^2  \sum_{0\leq k\leq n} e^{(n-k)(h_0-\epsilon_1)} e^{k(h_0-\epsilon_2)}\lesssim e^{n(h_0-\epsilon_1)},
\end{align}
which contradicts with the fact that $\sharp\mathcal{L}_n\geq e^{n h_{top}(\sigma, \Sigma)}$ (see \cite[Lemma 5.1]{CT12}).
\end{proof}

\noindent\emph{Proof of Theorem \ref{int-entropy}}
We deal with the case $h=h_{top}(\sigma, \Sigma)$ and $h\in [0, h_{top}(\sigma, \Sigma))$ separately. However, the constructions in the two cases are quite the same, i.e., taking the limit of a sequence of finite words in $\mathcal{L}$. For a point $w\in\Sigma$, let $p_w(n)$ be the number of different words of length $n$ appearing in $w$. It is easily seen that $\sharp\mathcal{L}_n(\overline{\mathcal{O}_{\sigma}(w)})=p_w(n)$.


\smallskip
$\bullet$ For the case $h=h_{top}(\sigma, \Sigma)$, firstly we arrange all elements of $\mathcal{G}$ by lexicographic order and write them as a sequence $\{\textbf{u}_k\}_{k\geq 1}$. By the specification property of $\mathcal{G}$, there exists a sequence of finite words $\{\textbf{v}_k\}_{k\geq 1}\subset\mathcal{L}_{\tau}$
such that
$$\textbf{u}_1\textbf{v}_1\textbf{u}_2\textbf{v}_2\cdots\textbf{v}_{n-1}\textbf{u}_n\in\mathcal{L} \;\;\text{for all}\;\; n\geq 1.$$
Let $w=\textbf{u}_1\textbf{v}_1\textbf{u}_2\textbf{v}_2\cdots\textbf{v}_{n-1}\textbf{u}_n\cdots$, then $w\in\Sigma$. Since all elements of $\mathcal{G}$ appear in $w$, we have
$$\sharp\mathcal{G}_n\leq p_w(n)\leq\sharp\mathcal{L}_n.$$
Then it follows from Lemma \ref{goodcore} that
$h_{top}(\sigma, \overline{\mathcal{O}_{\sigma}(w)})=h_{top}(\sigma, \Sigma)$.

\smallskip
$\bullet$ For the case $0\leq h<h_{top}(\sigma, \Sigma)$, we modify the construction of Grillenberger \cite{Gri72} where $\Sigma$ is a full shift (see also \cite{Bru98}). The original construction in \cite{Gri72} is based on free concatenation of finite words, which is impossible for general subshifts. Instead, by assuming the specification property $\mathcal{G}$, we will construct the required orbit by carefully gluing some initial chosen words in $\mathcal{G}$. The construction is divided into several steps:

\begin{enumerate}
  \item Step 1.
  Let $l_1, n_1$ be two sufficiently large integers satisfying $(l_1+\tau) h+2\leq\log n_1<(l_1+\tau) h+3$. Take a collection of $n_1$ words in $\mathcal{G}_{l_1}$, write it as $\mathcal{E}_1=\{\textbf{i}_1,\cdots,\textbf{i}_{n_1}\}$ and fix it. We call words in $\mathcal{E}_1$ the \emph{brick words}. By the specification property of $\mathcal{G}$, for
  each infinite configuration $(\textbf{i}_{j_1},\textbf{i}_{j_2},\cdots, \textbf{i}_{j_k},\cdots)\in\mathcal{E}_1^{\mathbb{N}}$ there exists a sequence $\{\textbf{v}_j\}_{j\geq 1}\subset\mathcal{L}_{\tau}$ such that
\begin{equation}\label{glue}
  \textbf{i}_{j_1}\textbf{v}_1\textbf{i}_{j_2}\textbf{v}_2\cdots\textbf{v}_{k-1}\textbf{i}_{j_k}\in\mathcal{L}\;\;\text{for all}\;\; k\geq 1.
\end{equation}
Note that there might be multiple choices of $\{\textbf{v}_j\}_{j\geq 1}$ satisfying \eqref{glue}, we shall fix one of them for
each choice $\mathcal{E}_1$ of initial brick words
and each infinite configuration $(\textbf{i}_{j_1},\textbf{i}_{j_2},\cdots, \textbf{i}_{j_k},\cdots)\in\mathcal{E}_1^{\mathbb{N}}$, and call it the sequence of \emph{gluing words}. In addition, given the $n_1$ different brick words in $\mathcal{E}_1$, there are $n_1!$ different permutations of them, each of which will produce a word of length $l_{2}=l_1 n_1+ (n_1-1)\tau$, which takes the form $\underbrace{\star\ast\star\ast\cdots\star\ast\star}_{n_1}$ with $\star\in\mathcal{E}_1$ and $\ast$ being the gluing words. Let $\textbf{u}_1$ be one of those words in $\mathcal{E}_1$.

  \smallskip
  \item From Step $k$ to Step $k+1$. Assume that at Step $k$, we have obtained $n_k$ different words in $\mathcal{L}_{l_k}$, write this collection as $\mathcal{E}_k$. Note that the elements of
  $\mathcal{E}_k$ are concatenations of those brick words in $\mathcal{E}_1$ and the gluing words in $\mathcal{L}_{\tau}$, which are of the form $\star\ast\star\ast\cdots\star\ast\star$ with $\star\in\mathcal{E}_1$ and $\ast$ being the gluing words. Thus, there are $n_k!$ different permutations of them, each of which will produce a word of length $l_{k+1}=l_k n_k+(n_k-1)\tau$ if we glue those brick words in them together. We denote the collection of such $n_k!$ different words of length $l_{k+1}$ by $\widetilde{\mathcal{E}_k}$. Let $\mathcal{E}_{k+1}$ be a subcollection of $\widetilde{\mathcal{E}_k}$ whose cardinality $n_{k+1}=\sharp\mathcal{E}_{k+1}$ satisfies $$(l_{k+1}+\tau) h+2\leq\log n_{k+1}<(l_{k+1}+\tau) h+3.$$ The choice is possible, since by induction, we have $n_k>4$ and by Stirling's formula,
  \begin{equation}\label{const}
     \log(n_k!)\geq n_k(\log n_k-1)\geq n_k(l_{k}\alpha+\tau\alpha+1)>l_{k+1} h+2.
  \end{equation}
      Let $\textbf{u}_{k+1}$ be one of $\mathcal{E}_{k+1}$.

\smallskip
\item Remove the gluing words in the configuration $(\textbf{u}_1, \textbf{u}_2, \cdots)$, we obtain an infinite configuration $(\textbf{i}_{j_1},\textbf{i}_{j_2},\cdots, \textbf{i}_{j_k},\cdots)$ of brick words. According to the discussion in Step 1, there exists a sequence $\{\textbf{v}_j\}_{j\geq 1}\subset\mathcal{L}_{\tau}$ such that
$\textbf{i}_{j_1}\textbf{v}_1\textbf{i}_{j_2}\textbf{v}_2\cdots\textbf{v}_{k-1}\textbf{i}_{j_k}\in\mathcal{L}\;\;\text{for all}\;\; k\geq 1.$
Let $w=\textbf{i}_{j_1}\textbf{v}_1\textbf{i}_{j_2}\textbf{v}_2\cdots\textbf{v}_{k-1}\textbf{i}_{j_k}\cdots$, then $w\in\Sigma$.
\end{enumerate}


Next we show that $h_{top}(\sigma, \overline{\mathcal{O}_{\sigma}(w)})=h$. Since at Step $k$ of the construction, there are $n_k$ different words of length $l_k$, thus $p_w(l_k)\geq n_k$. Then we have
\begin{equation*}
  h_{top}(\sigma, \overline{\mathcal{O}_{\sigma}(w)})=\lim_{n\rightarrow\infty}\frac{1}{n}\log \sharp\mathcal{L}_n(\overline{\mathcal{O}_{\sigma}(w)})=\lim_{n\rightarrow\infty}\frac{1}{n}\log p_w(n)\geq\lim\limits_{k\rightarrow\infty}\frac{1}{l_k}\log n_k\geq h.
\end{equation*}
On the other hand, for every factor $\textbf{u}$ of $w$ with length $l_{k+1}$, there exists $\textbf{v}=\textbf{j}_1\textbf{k}_1 \cdots \textbf{j}_{n_k}\textbf{k}_{n_k} \textbf{j}_{n_k+1}$ with $\textbf{j}_i$ sharing the same configuration of brick words as an element of $\mathcal{E}_k$ and $\textbf{k}_i\in\mathcal{L}_{\tau}$ such that $\textbf{u}$ is a factor of $\textbf{v}$. Therefore,
\begin{equation*}
  p_w(l_{k+1})\leq (l_k+2\tau) n_k^{n_k+1}m^{\tau\sum_{j=1}^{k-1}n_1\cdots n_j + \tau n_k}\leq 2 l_{k+1}n_k^{n_k}m^{\tau\sum_{j=1}^{k-1}n_1\cdots n_j + \tau n_k}.
\end{equation*}
Since both $\{n_k\}_{k\geq 1}$ and $\{l_k\}_{k\geq 1}$ are hyperexponential sequences tending to infinity, it follows that
\begin{equation*}
 \begin{split}
  h_{top}(\sigma, \overline{\mathcal{O}_{\sigma}(w)})=&\lim\limits_{n\rightarrow\infty}\frac{1}{n}\log p_w(n)
  \leq\lim\limits_{k\rightarrow\infty}\frac{\log (2 l_{k+1}n_k^{n_k}m^{\tau\sum_{j=1}^{k-1}n_1\cdots n_j + \tau n_k})}{l_{k+1}}\\ &\leq \lim\limits_{k\rightarrow\infty}\frac{\log 2 l_{k+1}}{l_{k+1}}+\lim\limits_{k\rightarrow\infty}\frac{\log n_k}{l_k}+\lim\limits_{k\rightarrow\infty}\frac{(\sum_{j=1}^{k-1}n_1\cdots n_j + n_k)\tau\log m}{l_{k+1}}\leq h.
 \end{split}
\end{equation*}
Therefore, $h_{top}(\sigma, \overline{\mathcal{O}_{\sigma}(w)})=h$.
\qed

\subsection{Proof of Theorem \ref{mainthm} modulo Theorem \ref{intermediate-dimension}}
\subsubsection{The upper bound.}
Write $c^{*}=e^{\max_{w\in\Sigma}\varphi(w)}$. Fix an arbitrarily small positive number $\rho$, let
\begin{equation}\label{Lambda-rho}
    \Lambda_{\rho}:=\left\{\textbf{i}\in\mathcal{L}: \rho\leq |[\textbf{i}]|_{\varphi}<c^{*}\rho\right\}.
\end{equation}
We claim $\Sigma$ is contained in the union of the
cylinders in $\{[\textbf{i}]: \textbf{i}\in\Lambda_{\rho}\}$. Indeed, for any $w\in\Sigma$, let $k$ be the largest integer satisfying $|[w|_1^k]|_{\varphi}\geq\rho$, then we must have $|[w|_1^k]|_{\varphi}<c^{*}\rho$. If not, then $|[w|_1^k]|_{\varphi}\geq c^{*}\rho$ would imply $|[w|_1^{k+1}]|_{\varphi}\geq\rho$, which contracts with the maximality of $k$. Therefore, any $w\in\Sigma$ must belong to one cylinder in
$\{[\textbf{i}]: \textbf{i}\in\Lambda_{\rho}\}$. Note that if the intersection of two cylinders in $\{[\textbf{i}]: \textbf{i}\in\Lambda_{\rho}\}$ is nonempty, then one must be contained in the other. Therefore, we can remove those
$\textbf{i}\in\Lambda_{\rho}$ with $\textbf{i}^-\in\Lambda_{\rho}$ so that the remaining subcollection of cylinders $\{[\textbf{i}]: \textbf{i}\in\Lambda_{\rho}\}$ covers $\Sigma$ and the cylinders in the subcollection are pairwise disjoint. By abuse of notation, we still use $\Lambda_{\rho}$ to denote the resulted subcollection.

Since the collection $\Lambda_{\rho}$ is finite, we write it as
$\Lambda_{\rho}=\{\textbf{i}_1,\cdots, \textbf{i}_N\}$ with
$|\textbf{i}_i|=k_i$ for each $1\leq i\leq N$ and $k_1\geq\ldots\geq k_N$. We claim that
$\Sigma\subset\Lambda_{\rho}^{\mathbb{N}}$. Indeed, since $\{[\textbf{i}]: \textbf{i}\in\Lambda_{\rho}\}$ covers $\Sigma$, for any $w\in\Sigma$, there exist $\textbf{i}\in\Lambda_{\rho}$ such that $w\in [\textbf{i}]$. Since $\Sigma$ is $\sigma$-invariant, we have $\sigma^{|\textbf{i}|}(w)\in\Sigma$, which again implies that $\sigma^{|\textbf{i}|}(w)$ belongs to a cylinder in $\{[\textbf{i}]: \textbf{i}\in\Lambda_{\rho}\}$. Repeating this process, we shall see that $\Sigma\subset\Lambda_{\rho}^{\mathbb{N}}$. Remark that $\Lambda_{\rho}^{\mathbb{N}}\subset\Omega=\Lambda^{\mathbb{N}}$ is
a subshift, and it can also be regarded as a full shift over the alphabet $\Lambda_{\rho}$.

We now equip $\Lambda_{\rho}^{\mathbb{N}}$ a new metric $d_{\varphi}^{\rho}$, which depends on $\varphi$ and $\rho$. For any two sequences $u, u'\in\Lambda_{\rho}^{\mathbb{N}}$ with
$u=\textbf{u}_1\cdots \textbf{u}_k \textbf{u}_{k+1}\cdots$ and $u'=\textbf{u}_1\cdots \textbf{u}_k \textbf{u}'_{k+1}\cdots$, where $\textbf{u}_i, \textbf{u}'_i\in\Lambda_{\rho}, \textbf{u}_{k+1}\neq \textbf{u}'_{k+1}$ and the length of $\textbf{u}_i$ is $n_i$ for each $1\leq i\leq k$, we define
\begin{equation}\label{two_metric}
   d_{\varphi}^{\rho}(u, u'):=
\exp(-S_{n_1}^*\varphi(u)-S_{n_2}^*\varphi(\sigma^{n_1}u)- \cdots -S_{n_k}^*\varphi(\sigma^{n_1+\cdots+n_{k-1}}u)).
\end{equation}
Then $d_{\varphi}^{\rho}$ defines a metric on the symbolic space $\Lambda_{\rho}^{\mathbb{N}}$.
It can be easily derived that there exists $c_{\rho}>1$ such that
\begin{equation}\label{Lip}
  c_{\rho}^{-1}d_{\varphi}^{\rho}(u, u')\leq d_{\varphi}(u, u')\leq c_{\rho}e^{\sum_{i=1}^n\text{Var}_i\varphi}d_{\varphi}^{\rho}(u, u')
\end{equation}
for any $u, u'\in\Lambda_{\rho}^{\mathbb{N}}$, here $n=\sum_{i=1}^k n_i$. Let $\dim_{\rm H}^{\varphi, \rho}$ denote the Hausdorff dimension with respect to the metric $d_{\varphi}^{\rho}$. Then for any subset $A\subset\Sigma$, \eqref{Lip} implies that $\dim_{\rm H}^{\varphi, \rho} A=\dim_{\rm H}^{\varphi} A$.

Let $\sigma_{\rho}$ be the left shift map on the full shift $\Lambda_{\rho}^{\mathbb{N}}$ over $\Lambda_{\rho}$. Then for each $w\in\Sigma\subset\Lambda_{\rho}^{\mathbb{N}}$, we have $\mathcal{O}_{\sigma_{\rho}}(w)\subset\mathcal{O}_{\sigma}(w)$. Moreover,
\begin{equation}\label{suborbit}
  \overline{\mathcal{O}_{\sigma_{\rho}}(w)}\subset\overline{\mathcal{O}_{\sigma}(w)}\subset
\bigcup_{i=0}^{k_1}\sigma^i(\overline{\mathcal{O}_{\sigma_{\rho}}(w)}),
\end{equation}
here we recall that $k_1$ is the length of the longest word in $\Lambda_{\rho}$.
Since $\sigma$ is bi-Lipschitz on the metric space $(\Sigma, d_{\varphi})$, combining \eqref{Lip} and \eqref{suborbit} , we have
\begin{equation*}
 \dim_{\rm H}^{\varphi}\overline{\mathcal{O}_{\sigma}(w)}=\dim_{\rm H}^{\varphi}\overline{\mathcal{O}_{\sigma_{\rho}}(w)}
=\dim_{\rm H}^{\varphi, \rho}\overline{\mathcal{O}_{\sigma_{\rho}}(w)}
\end{equation*}
for each $w\in\Sigma$. As a result,
\begin{equation*}
  E_{\sigma}(\alpha)=\left\{w\in\Sigma: \dim_{\rm H}^{\varphi, \rho}\overline{\mathcal{O}_{\sigma_{\rho}}(w)}=\alpha\right\}.
\end{equation*}
Let
\begin{equation*}
  D_{\alpha}=\left\{w\in\Sigma: \dim_{\rm H}^{\varphi, \rho}\overline{\mathcal{O}_{\sigma_{\rho}}(w)}\leq\alpha\right\}.
\end{equation*}
Applying Remark \ref{LYtopFormula} to the subshift $\big(\overline{\mathcal{O}_{\sigma_{\rho}}(w)}, \sigma_{\rho}\big)$, by \eqref{two_metric} and the definition of $\Lambda_{\rho}$, we have
\begin{equation*}
\frac{h_{top}\big(\sigma_{\rho}, \overline{\mathcal{O}_{\sigma_{\rho}}(w)}\big)}{-\log\rho}\leq \dim_{\rm H}^{\varphi, \rho}\overline{\mathcal{O}_{\sigma_{\rho}}(w)}
\leq\frac{h_{top}\big(\sigma_{\rho}, \overline{\mathcal{O}_{\sigma_{\rho}}(w)}\big)}{-\log\rho-\log c^{*}}
\end{equation*}
Applying Theorem \ref{Bowen} to $\sigma_{\rho}$ implies
\begin{equation*}
  h_{top}(\sigma_\rho, D_{\alpha})\leq h_{top}\Big(\sigma_{\rho}, \big\{w\in\Sigma: h_{top}\big(\sigma_{\rho}, \overline{\mathcal{O}_{\sigma_{\rho}}(w)}\big)\leq -\alpha\log\rho\big\}\Big)\leq-\alpha\log\rho,
\end{equation*}
here $h_{top}(\sigma_{\rho}, \cdot)$ denotes the topological entropy with respect to $\sigma_{\rho}$ in Bowen's sense. Therefore,
\begin{equation*}
  \dim_{\rm H}^{\varphi} E_{\sigma}(\alpha)\leq \dim_{\rm H}^{\varphi}D_{\alpha}=\dim_{\rm H}^{\varphi, \rho} D_{\alpha}\leq \frac{h_{top}(\sigma_{\rho}, D_{\alpha})}{-\log\rho-\log c^{*}}\leq \frac{-\alpha\log\rho}{-\log\rho-\log c^{*}}.
\end{equation*}
Letting $\rho\rightarrow 0$, we obtain the upper bound.

\subsubsection{The lower bound}

By Theorem \ref{intermediate-dimension}, for any $\alpha\in [0, \gamma]$, there exists a closed invariant subset $X$ with a $\sigma$-invariant measure $\nu$ supported on it and satisfying $\dim_{\rm H}^{\varphi}X=\dim_{\rm H}^{\varphi}\nu$. By passing to ergodic decomposition, we may assume $\nu$ is ergodic.
Since $\nu$-almost every $x$ is generic with respect to $\nu$, therefore, for $\nu$-a.e. $x$,
the orbit of $x$ is dense in the compact subset $X$, thus $\dim_{\rm H}^{\varphi}\overline{\mathcal{O}_{\sigma}(x)}=\dim_{\rm H}^{\varphi}X=\alpha$. From this we infer that the set $G_{\nu}$ of generic points of $\nu$ consists of a subset of $E_{\sigma}^{\varphi}(\alpha)$. Since $\dim_{\rm H}^{\varphi}G_{\nu}=\dim_{\rm H}^{\varphi}X=\alpha$, we have $\dim_{\rm H}^{\varphi}E_{\sigma}^{\varphi}(\alpha)\geq \alpha$, as desired.
\qed

\subsection{Proof of Theorem \ref{intermediate-dimension}}
\subsubsection{Dimension of $\Sigma$ in terms of the exponential growth rate of $\mathcal{G}$}
In this subsection, we will characterize $\gamma=\dim_{\rm H}^{\varphi}\Sigma$ in terms of the exponential growth rate of certain family of words.
Recall that $c^{*}=e^{\max_{w\in\Sigma}\varphi(w)}$, and write $c_{*}=e^{\min_{w\in\Sigma}\varphi(w)}$.
For a subcollection of finite words $\mathcal{D}\subset\mathcal{L}$ and each $n\geq 1$, let
\begin{equation*}
    C_\mathcal{D}^n=\left\{ \textbf{i}\in \mathcal{D}: e^{-n}\leq
|[\textbf{i}]|_{\varphi}<c^{*}e^{-n}\right\}.
\end{equation*}
Obviously, $C_\mathcal{L}^n=\Lambda_{e^{-n}}$, as defined in \eqref{Lambda-rho}. For any $\mathcal{D}\subset\mathcal{L}$, we also have the partition:
\begin{equation}\label{CD}
    C_\mathcal{D}^n=\bigsqcup_{0\leq l\leq \lfloor\frac{\log c^{*}}{\log c_{*}}\rfloor, \;l\in\mathbb{N}}\left\{ \textbf{i}\in \mathcal{D}: c_{*}^l e^{-n}\leq
|[\textbf{i}]|_{\varphi}<c_{*}^{l+1} e^{-n}\right\}=:\bigsqcup_{0\leq l\leq\lfloor\frac{\log c^{*}}{\log c_{*}}\rfloor, \;l\in\mathbb{N}}C_\mathcal{D}^n(l).
\end{equation}
Note that if the intersection of two cylinders in $\Sigma$ is nonempty, then one must be contained in the other. By the definition of the metric $d_{\varphi}$, we have $|[\textbf{i}^{-}]|_{\varphi}\geq c_{*}|[\textbf{i}]|_{\varphi}$, thus for any $\mathcal{D}$ and each $l$, the collection of cylinders $\left\{[\textbf{i}]: \textbf{i}\in C_\mathcal{D}^n(l)\right\}$ is disjoint. Recall that under our assumptions, the language of $\Sigma$ admits the decomposition $\mathcal{L}=\mathcal{C}^p\mathcal{G} \mathcal{C}^s$ with $\mathcal{G}$ satisfying the specification property and the pressure gap condition $\sum_{n\geq 1} Z_n(\sigma, \mathcal{C}^p\cup\mathcal{C}^s, -\gamma\varphi)<\infty$. The next lemma indicates that the $\varphi$-Hausdorff dimension of $\Sigma$ can be interpreted as the exponential growth rate of $C_\mathcal{G}^n$.

\begin{lem}\label{lemCG}
Assume that $(\Sigma, \sigma)$ and $\varphi$ satisfy the conditions in Theorem \ref{mainthm}, and $\gamma=\dim_{\rm H}^{\varphi}\Sigma$. Let $C_\mathcal{G}^n(l)$ be defined as in \eqref{CD}, then
\begin{equation}\label{CG}
\limsup_{n\to\infty}\frac{\max_{l}\left(\log\sharp C_\mathcal{G}^n(l)\right)}{n}=\gamma.
\end{equation}
\end{lem}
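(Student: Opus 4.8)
The plan is to prove the two inequalities in \eqref{CG} separately, using the pressure gap condition to pass between the ``good core'' $\mathcal{G}$ and the full language $\mathcal{L}$, together with the characterization $\gamma = \dim_{\rm H}^\varphi \Sigma = \dim_{\rm B}^\varphi \Sigma$ and the equation $P(\sigma, -\gamma\varphi) = 0$ from Theorem \ref{GP}(2).

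\smallskip
\textbf{Upper bound ($\le \gamma$).} First I would observe that $C_\mathcal{G}^n \subseteq C_\mathcal{L}^n = \Lambda_{e^{-n}}$, so that $\sharp C_\mathcal{G}^n(l) \le \sharp C_\mathcal{L}^n$. For each $\mathbf{i} \in C_\mathcal{L}^n$ with $|\mathbf{i}| = k$ we have $|[\mathbf{i}]|_\varphi = e^{-S_k^*\varphi(w)} \ge e^{-n}$, hence $e^{S_k\varphi(w)} \le e^{n} e^{S_k\varphi(w) - S_k^*\varphi(w)} \le e^n e^{\sum_{i\ge 1}\mathrm{Var}_i\varphi}$ for any $w \in [\mathbf{i}]$, the last sum being finite by the Dini condition. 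Summing $e^{-\gamma S_k\varphi(w)}$ over $\mathbf{i} \in C_\mathcal{L}^n$ and grouping by length $k$, this quantity is bounded below by a uniform constant times $e^{-\gamma n}\, \sharp C_\mathcal{L}^n$ (using $|[\mathbf{i}]|_\varphi < c^* e^{-n}$ for the reverse comparison), while on the other hand it is at most $\sum_{k\ge 1} Z_k(\sigma, \mathcal{L}, -\gamma\varphi) \le \sum_{k\ge 1} C e^{k P(\sigma, -\gamma\varphi)} = \sum_k C$, which has only subexponentially many nonzero terms since $|\mathbf{i}| = k$ forces $c_*^{-k} \le e^{-S_k^*\varphi} e^{n}$-type bounds, i.e.\ $k$ ranges in an interval of length $O(n)$. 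Combining these yields $\sharp C_\mathcal{L}^n \lesssim \mathrm{poly}(n)\, e^{\gamma n}$ (using \eqref{Uniformcount2}), so $\limsup_n \frac{1}{n}\log \max_l \sharp C_\mathcal{G}^n(l) \le \limsup_n \frac1n\log \sharp C_\mathcal{L}^n \le \gamma$. Equivalently, one can bypass the counting estimate and simply note that since $\dim_{\rm B}^\varphi\Sigma = \gamma$, the minimal number of $d_\varphi$-balls of radius $\sim e^{-n}$ covering $\Sigma$ grows like $e^{\gamma n + o(n)}$, and $\{[\mathbf{i}] : \mathbf{i} \in C_\mathcal{L}^n\}$ (after removing nested cylinders) is essentially such a cover up to bounded multiplicity in each scale class $C_\mathcal{L}^n(l)$.

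\smallskip
\textbf{Lower bound ($\ge \gamma$).} This is the harder direction and the main obstacle. Suppose for contradiction that $\limsup_n \frac{1}{n}\max_l \log \sharp C_\mathcal{G}^n(l) < \gamma - \epsilon$ for some $\epsilon > 0$. Since the partition in \eqref{CD} has a bounded number $\lfloor \log c^*/\log c_*\rfloor + 1$ of classes, this gives $\sharp C_\mathcal{G}^n \lesssim e^{n(\gamma-\epsilon)}$. I would then estimate $Z_n$-type sums over $C_\mathcal{L}^n$ by decomposing each $\mathbf{i} \in \mathcal{L}$ as $\mathbf{i} = \mathbf{i}^p\mathbf{i}^g\mathbf{i}^s$ with $\mathbf{i}^p \in \mathcal{C}^p$, $\mathbf{i}^g \in \mathcal{G}$, $\mathbf{i}^s \in \mathcal{C}^s$ (as in Lemma \ref{goodcore}), so that $e^{-\gamma S\varphi(\mathbf{i})} \sim e^{-\gamma S\varphi(\mathbf{i}^p)} e^{-\gamma S\varphi(\mathbf{i}^g)} e^{-\gamma S\varphi(\mathbf{i}^s)}$ up to the Dini-summable variation error, and the contribution of $\mathbf{i}^p, \mathbf{i}^s$ is controlled by $\sum_n Z_n(\sigma, \mathcal{C}^p \cup \mathcal{C}^s, -\gamma\varphi) < \infty$ by the pressure gap condition. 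This would force $\sum_n Z_n(\sigma, \mathcal{L}, -\gamma\varphi) $ to be dominated, scale by scale, by a geometric-type series in $e^{-\epsilon}$ coming from the deficient growth of $\mathcal{G}$, contradicting the lower bound $Z_n(\sigma, \mathcal{L}, -\gamma\varphi) \ge 1$ from \eqref{Uniformcount2} (which itself reflects $P(\sigma, -\gamma\varphi) = 0$). Concretely, I expect to show $\sharp C_\mathcal{L}^n \lesssim \sum_{a+b+c \le O(n)} \sharp(\mathcal{C}^p)_a\, \sharp\mathcal{G}_b\, \sharp(\mathcal{C}^s)_c$ weighted appropriately by the $-\gamma\varphi$ potential, and that the $\mathcal{C}^p, \mathcal{C}^s$ factors sum to a constant while the $\mathcal{G}$ factor, by hypothesis, is $e^{n(\gamma-\epsilon) + o(n)}$; hence $\sharp C_\mathcal{L}^n$-weighted sums are $e^{n(\gamma-\epsilon)+o(n)}$, forcing $\dim_{\rm B}^\varphi \Sigma \le \gamma - \epsilon$, contradicting Theorem \ref{GP}(2).

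\smallskip
The delicate points to handle carefully: (i) matching up the three scale-decompositions (the metric-defined $C^n_\mathcal{D}$ versus word-length-indexed sums $Z_k$) and absorbing all the $S_k\varphi - S_k^*\varphi$ discrepancies into the Dini constant $e^{\sum_i \mathrm{Var}_i\varphi}$; (ii) ensuring the range of relevant lengths $|\mathbf{i}|$ for $\mathbf{i} \in C^n_\mathcal{D}$ is an interval of length $O(n)$ (immediate from $c_* \le e^{\varphi} \le c^*$), so that polynomially-many scale classes don't affect exponential rates; and (iii) the bookkeeping in the convolution $\mathcal{L} = \mathcal{C}^p\mathcal{G}\mathcal{C}^s$, where a given word may admit several such decompositions — but an upper bound only needs \emph{one} decomposition per word, so this over-counts harmlessly. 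I would present the lower bound as the contrapositive argument sketched above, since a direct construction of exponentially many disjoint cylinders indexed by $\mathcal{G}$-words seems to require exactly the kind of counting that the contradiction route packages more cleanly.
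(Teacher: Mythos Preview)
Your upper bound is correct and matches the paper's argument: disjointness of the cylinders $\{[\mathbf{i}]:\mathbf{i}\in C_\mathcal{G}^n(l)\}$ together with $\dim_{\rm B}^\varphi\Sigma=\gamma$ gives $\limsup_n\frac{1}{n}\log\sharp C_\mathcal{G}^n(l)\le\gamma$ immediately.

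For the lower bound you sketch two possible contradictions, and they are not equally solid. Route (b), concluding $\dim_{\rm B}^\varphi\Sigma\le\gamma-\epsilon$, has a gap. The convolution over $\mathbf{i}=\mathbf{i}^p\mathbf{i}^g\mathbf{i}^s$ gives (schematically)
\[
\sharp C_\mathcal{L}^n \;\lesssim\; e^{n(\gamma-\epsilon)}\Bigl(\sum_{\mathbf{i}^p\in\mathcal{C}^p}|[\mathbf{i}^p]|_\varphi^{\gamma-\epsilon}\Bigr)\Bigl(\sum_{\mathbf{i}^s\in\mathcal{C}^s}|[\mathbf{i}^s]|_\varphi^{\gamma-\epsilon}\Bigr),
\]
and the pressure gap only guarantees $\sum_k Z_k(\sigma,\mathcal{C}^p\cup\mathcal{C}^s,-\gamma\varphi)<\infty$, \emph{not} the analogous sum with exponent $\gamma-\epsilon$. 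If $Z_k(\mathcal{C}^p,-\gamma\varphi)$ decays only polynomially, the bracketed sums diverge and you cannot close the argument. At best you obtain $\sharp C_\mathcal{L}^n\cdot e^{-\gamma n}\to 0$, which is compatible with $\dim_{\rm B}^\varphi\Sigma=\gamma$. Route (a), contradicting $Z_n(\sigma,\mathcal{L},-\gamma\varphi)\ge 1$ from \eqref{Uniformcount2}, does work: the hypothesis forces $Z_b(\sigma,\mathcal{G},-\gamma\varphi)\lesssim e^{-\epsilon' b}$, and then the length-indexed convolution $Z_n(\mathcal{L})\lesssim\sum_{m\le n}\bigl(\sum_{a+c=m}Z_a(\mathcal{C}^p)Z_c(\mathcal{C}^s)\bigr)e^{-\epsilon'(n-m)}\to 0$ by summability of the inner sum.

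The paper takes a third anchor, equivalent to your (a) via the weak Gibbs property but organised more cleanly: it invokes the measure $\mu$ of maximal dimension from Theorem~\ref{extendGP}. Since $\{[\mathbf{i}]:\mathbf{i}\in C_\mathcal{L}^n\}$ covers $\Sigma$, one has $\mu(C_\mathcal{L}^n)=1$, so by pigeonhole some $\mu(C_\mathcal{L}^n(l))$ is bounded below uniformly in $n$. The weak Gibbs bound \eqref{gibbs} turns this into a uniform lower bound on the $-\gamma\varphi$-weighted sum at metric scale $e^{-n}$, and then the same decomposition argument (splitting according to $|\mathbf{i}^p|+|\mathbf{i}^s|\ge\log n$ versus $<\log n$) shows this weighted sum tends to~$0$. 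The advantage of working with $\mu$ is that the ``total mass $=1$'' statement lives natively at the metric scale, so the scale-to-length bookkeeping you flag in your point~(i) is largely avoided.
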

\begin{proof}
By Theorem \ref{GP} (2), $\dim^\varphi_{\rm H}\Sigma=\dim^\varphi_{\rm B}\Sigma=\gamma$. For any $n\geq 1$ and each $0\leq l\leq \lfloor\frac{\log c^{*}}{\log c_{*}}\rfloor$, since the collection $\left\{[\textbf{i}]: \textbf{i}\in C_\mathcal{G}^n(l)\right\}$ consists of disjoint cylinders of diameter larger than $c_{*}^{l} e^{-n}$, the definition of $\varphi$-Box dimension implies that $N_{c_{*}^{l} e^{-n}}(\Sigma)\geq\sharp C_\mathcal{G}^n(l)$. Thus,
     \begin{equation*}
         \gamma=\dim^\varphi_{\rm B}\Sigma=\lim_{n\to\infty}\frac{\log N_{c_{*}^{l} e^{-n}}(\Sigma)}{n}\geq\limsup_{n\to\infty}\frac{\log\sharp C_\mathcal{G}^n(l)}{n}, \;\text{for any}\; 0\leq l\leq \big\lfloor\frac{\log c^{*}}{\log c_{*}}\big\rfloor.
     \end{equation*}
Therefore,
\begin{equation*} \gamma\geq\limsup_{n\to\infty}\frac{\max_{l}\left(\log\sharp C_\mathcal{G}^n(l)\right)}{n}=:s_0.
\end{equation*}

Now we suppose that $\gamma>s_0$, then one can take $\gamma'\in (s_0, \gamma)$ such that $\max_{l}\sharp C_\mathcal{G}^n(l)<e^{\gamma'n}$ for $n$ large enough.
Let $\mu$ be the measure of maximal dimension in Theorem \ref{extendGP}. For $\mathcal{D}\subset\mathcal{L}$, we will write $\mu(\mathcal{D}):=\mu(\cup_{\textbf{i}\in \mathcal{D}}[\textbf{i}])$ for convenience. Notice that
$\{[\textbf{i}]: \textbf{i}\in C_\mathcal{L}^n\}$ provides a cover of $\Sigma$, thus $\mu(C_\mathcal{L}^n)=1$ for each $n\geq 1$. By the pigeonhole principle, there exist $0\leq l\leq \lfloor\frac{\log c^{*}}{\log c_{*}}\rfloor$ such that $\mu(C_\mathcal{L}^n(l))>\frac{\log c_{*}}{\log c^{*}}$. On the other hand, by the weak Gibbs property \eqref{gibbs} and the Dini condition $\sum_{n\geq 1}\text{Var}_n\varphi<\infty$, we have
\begin{align}\label{CLmeasure}
    \mu(C_\mathcal{L}^n(l))
    & \leq \mu(C_\mathcal{G}^n(l)) + \mu(C_\mathcal{L}^n(l)\setminus C_\mathcal{G}^n(l)) \notag \\
    & \lesssim \sum_{\textbf{i}\in C_\mathcal{G}^n(l)} \sup_{w\in [\textbf{i}]} e^{-\gamma S_{|\textbf{i}|}\varphi(w)} + \sum_{\textbf{i}\in C_\mathcal{L}^n(l)\setminus C_\mathcal{G}^n(l)} \sup_{w\in [\textbf{i}]} e^{-\gamma S_{|\textbf{i}|}\varphi(w)}.
\end{align}

For the first term in \eqref{CLmeasure}, we have
\begin{equation}\label{gcount}
   \sum_{\textbf{i}\in C_\mathcal{G}^n(l)} \sup_{w\in [\textbf{i}]} e^{-\gamma S_{|\textbf{i}|}\varphi(w)}= \sum_{\textbf{i}\in C_\mathcal{G}^n(l)} (|[\textbf{i}]|_{\varphi})^{\gamma}\leq (c_{*}^{l+1} e^{-n})^{\gamma}\sharp C_\mathcal{G}^n(l)
   \lesssim e^{-(\gamma-\gamma')n}.
\end{equation}

For the second term in \eqref{CLmeasure}, note that each $\textbf{i}\in C_\mathcal{L}^n(l)\setminus C_\mathcal{G}^n(l)$ has the decomposition
$\textbf{i}=\textbf{i}^p\textbf{i}^g\textbf{i}^s$ with $\textbf{i}^p\in\mathcal{C}^p, \textbf{i}^g \in\mathcal{G}, \textbf{i}^s\in\mathcal{C}^s$ and $|\textbf{i}^p|+|\textbf{i}^s|\geq 1$. Then
\begin{align}\label{splitineq}
    & \sum_{\textbf{i}\in C_\mathcal{L}^n(l)\setminus C_\mathcal{G}^n(l)} \sup_{w\in [\textbf{i}]} e^{-\gamma S_{|\textbf{i}|}\varphi(w)} \notag\\
     \lesssim &\sum_{k\geq 1} \sum_{\textbf{i}\in C_\mathcal{L}^n(l)\setminus C_\mathcal{G}^n(l)\atop |\textbf{i}^p|+|\textbf{i}^s|=k} \sup_{w\in [\textbf{i}^p]} e^{-\gamma S_{|\textbf{i}^p|}\varphi(w)} \sup_{w\in [\textbf{i}^g]} e^{-\gamma S_{|\textbf{i}^g|}\varphi(w)} \sup_{w\in [\textbf{i}^s]} e^{-\gamma S_{|\textbf{i}^s|}\varphi(w)} \notag \\
     = & \sum_{k\geq \log n} \sum_{w\in\textbf{i}\in C_\mathcal{L}^n(l)\setminus C_\mathcal{G}^n(l)\atop |\textbf{i}^p|+|\textbf{i}^s|=k} \sup_{w\in [\textbf{i}^p]} e^{-\gamma S_{|\textbf{i}^p|}\varphi(w)} \sup_{w\in [\textbf{i}^g]} e^{-\gamma S_{|\textbf{i}^g|}\varphi(w)} \sup_{w\in [\textbf{i}^s]} e^{-\gamma S_{|\textbf{i}^s|}\varphi(w)} \notag \\
     & + \sum_{1\leq k<\log n} \sum_{\textbf{i}\in C_\mathcal{L}^n(l)\setminus C_\mathcal{G}^n(l)\atop |\textbf{i}^p|+|\textbf{i}^s|=k} \sup_{w\in [\textbf{i}^p]} e^{-\gamma S_{|\textbf{i}^p|}\varphi(w)} \sup_{w\in [\textbf{i}^g]} e^{-\gamma S_{|\textbf{i}^g|}\varphi(w)} \sup_{w\in [\textbf{i}^s]} e^{-\gamma S_{|\textbf{i}^s|}\varphi(w)}.
\end{align}

Let $t_n$ be the largest length of words in $C_\mathcal{L}^n(l)$. By the uniform counting result \eqref{Uniformcount2}, the first term in \eqref{splitineq} is bounded from above by
\begin{align}\label{smallpressure}
&\sum_{k\geq \log n} Z_k(\sigma, \mathcal{C}^p\cup\mathcal{C}^s, -\gamma\varphi) Z_{t_n}(\sigma, \mathcal{G}, -\gamma\varphi)\notag\\
\leq & \sum_{k\geq \log n} Z_k(\sigma, \mathcal{C}^p\cup\mathcal{C}^s, -\gamma\varphi) Z_{t_n}(\sigma, \mathcal{L}, -\gamma\varphi) \notag\\
\lesssim &  \sum_{k\geq \log n} Z_k(\sigma, \mathcal{C}^p\cup\mathcal{C}^s, -\gamma\varphi).
\end{align}

For the second term in \eqref{splitineq}, note that for each $k$ and $\textbf{i}\in C_\mathcal{L}^n(l)\setminus C_\mathcal{G}^n(l)$ with $|\textbf{i}^p|+|\textbf{i}^s|=k$, we have
\begin{equation}\label{splitineq1}
    (c_{*})^{\gamma (k+l)} e^{-\gamma n}\leq\sup_{w\in [\textbf{i}^g]} e^{-\gamma S_{|\textbf{i}^g|}\varphi(w)}=(|[\textbf{i}^g]|_{\varphi})^{\gamma}\leq (c_{*})^{\gamma(l+1)}(c^{*})^{\gamma k} e^{-\gamma n}.
\end{equation}
Recall that $\max_{l}\sharp C_\mathcal{G}^{n}(l)<e^{\gamma'n}$ for $n$ large enough, it follows immediately that
\begin{equation}\label{splitineq2}
    \sharp\left\{\textbf{i}\in C_\mathcal{L}^n(l)\setminus C_\mathcal{G}^n(l): |\textbf{i}^p|+|\textbf{i}^s|=k\right\}\lesssim m^k e^{\gamma' n}.
\end{equation}
Plugging \eqref{splitineq1} and \eqref{splitineq2} into the second term in \eqref{splitineq}, we obtain an upper bound
\begin{align}\label{splitineq3}
    &\sum_{1\leq k<\log n} \sum_{\textbf{i}\in C_\mathcal{L}^n(l)\setminus C_\mathcal{G}^n(l)\atop |\textbf{i}^p|+|\textbf{i}^s|=k} \sup_{w\in [\textbf{i}^p]} e^{-\gamma S_{|\textbf{i}^p|}\varphi(w)} \sup_{w\in [\textbf{i}^g]} e^{-\gamma S_{|\textbf{i}^g|}\varphi(w)} \sup_{w\in [\textbf{i}^s]} e^{-\gamma S_{|\textbf{i}^s|}\varphi(w)} \notag\\
    \lesssim & \sum_{1\leq k<\log n} m^k e^{\gamma' n} (c^{*})^{\gamma k} e^{-\gamma n}.
\end{align}

Plugging \eqref{gcount}, \eqref{smallpressure} and \eqref{splitineq3} into \eqref{CLmeasure}, and considering the pressure gap condition, we will see that $\mu(C_\mathcal{L}^n(l))\rightarrow 0$ as $n\rightarrow\infty$, which contradicts with $\mu(C_\mathcal{L}^n(l))>\frac{\log c_{*}}{\log c^{*}}$. Thus, we have
\begin{equation*}
\gamma\leq\limsup_{n\to\infty}\frac{\max_{l}\left(\log\sharp C_\mathcal{G}^n(l)\right)}{n}.
\end{equation*}

\end{proof}



\subsubsection{Proof of Theorem \ref{intermediate-dimension}.}
Our goal is show that for any $\alpha\in [0, \gamma]$, there exists a $\sigma$-invariant measure with $\varphi$-Hausdorff dimension $\alpha$. Since Theorem \ref{GP} implies the existence of measure of maximal dimension, we only need to deal with the case $\alpha<\gamma$. Let us fix $\alpha\in [0, \gamma)$.

{\bf Step 1.} Let us fix a small $\epsilon_1\in (0, \frac{\gamma-\alpha}{2})$.  By Lemma \ref{lemCG},  we have
\[
\limsup_{n\to\infty}\frac{\max_{0\leq l\leq \lfloor\frac{\log c^{*}}{\log c_{*}}\rfloor}\left(\log\sharp C_\mathcal{G}^n(l)\right)}{n}=\gamma >\alpha.
\]
Hence there exist arbitrarily large $n_1$ and a collection $A_1\subset C_\mathcal{G}^{n_1}(l)$ with $0\leq l\leq \lfloor\frac{\log c^{*}}{\log c_{*}}\rfloor$ such that
\[
    \frac{\log\sharp A_1}{n_1}\in (\alpha+\frac{\epsilon_1}{2},\alpha+\epsilon_1).
\]
Note that the cylinders in $\{\textbf{u}: \textbf{u}\in A_1\}$ are disjoint.

Let $\mu_1=p^\N$ be the Bernoulli measure on $A_1^\N$, where $p=(p_{\textbf{u}})_{\textbf{u}\in A_1}$ with $p_{\textbf{u}}=\frac{1}{\sharp A_1}$ for all $\textbf{u}\in A_1$.
Let us choose and fix a $\mu_1$-generic point $u=\textbf{u}_1\textbf{u}_2\cdots\in A_1^\N$. By the specification property of $\mathcal{G}$, there exists a sequence $\{\textbf{v}_i\}_{i\ge 1}\subset\mathcal{L}\cap \Lambda^\tau$ such that
\[
\textbf{u}_1\textbf{v}_1\textbf{u}_2\cdots \textbf{u}_{n-1}\textbf{v}_{n-1}\textbf{u}_n\in \mathcal{L} \ \ \textrm{ for all } n\ge 1.
\]
Write $\omega=\textbf{u}_1\textbf{v}_1\textbf{u}_2\textbf{v}_2\cdots $.  Then we have $\omega\in \Sigma$.  Since for each $k\ge 1$, $\textbf{u}_k\textbf{v}_k\in A_1\Lambda^\tau$,  we can also view $\omega$ as an element in $(A_1\Lambda^\tau)^\N$.

For each $n\ge 1$, let
\[
\eta_n'=\frac{1}{n}\sum_{k=1}^{n-1}\delta_{\{\textbf{u}_k\textbf{v}_k\cdots\}}
\]
and
\[
\eta_n=\frac{1}{\tilde{n}}\sum_{k=0}^{\tilde{n}}\delta_{\{\sigma^k\omega\}}
\]
where $\tilde{n}=|\textbf{u}_1\textbf{v}_1 \cdots \textbf{u}_{n}\textbf{v}_{n} |-1$.
For $k\ge 1$,  denote $i_k=|\textbf{u}_k\textbf{v}_k|$.  Note that $\eta_n$ may be rewritten as
\[
\eta_n=\frac{1}{\tilde{n}}\sum_{k=1}^{n-1}\sum_{j=0}^{i_k-1}\delta_{\{\sigma^j(\textbf{u}_k\textbf{v}_k\cdots)\}}.
\]
Denote $m_1=\max\{|\textbf{u}|: \textbf{u}\in A_1\}$.  For $1\le \ell \le m_1+\tau$, let $W_\ell=\{k\in \N: i_k=\ell\}$.  Let
\[
\eta_{n,\ell} = \frac{1}{n}\sum_{\substack{1\le k\le n-1 \\ k\in W_\ell}}\delta_{\{\textbf{u}_k\textbf{v}_k\cdots\}}.
\]
Then we have
\begin{equation}\label{eq:proof:Lemma-claim-1: 1}
\eta_n'=\sum_{\ell=1}^{m_1+\tau}\eta_{n,\ell}
\end{equation}
and
\begin{equation}\label{eq:proof:Lemma-claim-1: 2}
\eta_n=\frac{n}{\tilde{n}}\sum_{\ell=1}^{m_1+\tau}\sum_{j=0}^{\ell-1}\sigma^j(\eta_{n,\ell}).
\end{equation}
Let $(n_k)_k\subset \N$ be a subsequence such that $\frac{n}{\tilde{n}}$ converges to some limit $\rho$ and $\eta_{n,\ell}$ converges to $\eta_{\infty,\ell}$,  $1\le \ell \le m_1+\tau$.  Let
\begin{equation}\label{eq:proof:Lemma-claim-1: 3}
\nu_1'=\sum_{\ell=1}^{m_1+\tau}\eta_{\infty,\ell}
\end{equation}
and
\begin{equation}\label{eq:proof:Lemma-claim-1: 4}
\nu_1=\rho \sum_{\ell=1}^{m_1+\tau}\sum_{j=0}^{\ell-1}\sigma^j(\eta_{\infty,\ell}).
\end{equation}
Thus $\nu_1$ is the limit of $\eta_{n_k}$ as $k\to\infty$.  It follows that  $\nu_1$ is $\sigma$-invariant  and supported on $\Sigma$ (since $\omega\in \Sigma$).

Let us estimate the dimensions of $\nu_1'$ and $\nu_1$.  Observe that the measure $\nu_1'$ is supported on $(A_1\Lambda^\tau)^\N$ and for each $\textbf{u}_1'\textbf{v}_1' \cdots \textbf{u}_{n}'\textbf{v}_{n}'\in (A_1\Lambda^\tau)^n $ (with $\textbf{u}_1',\ldots, \textbf{u}_k'\in A_1$ and $\textbf{v}_1',\ldots, \textbf{v}_k'\in \Lambda^\tau$), we have
\begin{equation}\label{eq:proof:Lemma-claim-1: 5}
\nu_1'([\textbf{u}_1'\textbf{v}_1'\textbf{u}_2'\textbf{v}_2' \cdots \textbf{u}_{n}'\textbf{v}_{n}'])\le \mu_1([\textbf{u}_1'\textbf{u}_2'\cdots \textbf{u}_{n}'])=\left(\sharp A_1\right)^{-n}.
\end{equation}
On the other hand,  writing $\textbf{u}=\textbf{u}_1'\textbf{v}_1' \cdots \textbf{u}_{n}'\textbf{v}_{n}'$,  we have
\[
\diam([\textbf{u}])=|[\textbf{u}]|_\varphi = e^{-\min_{x\in [\textbf{u}]}S_{|\textbf{u}|}\varphi(x) } \ge e^{-n\tau a^*}\prod_{k=1}^n e^{-\max_{x\in [\textbf{u}_k']}S_{|\textbf{u}_k'|}\varphi(x) },
\]
where $a^*=\max_{x\in \Lambda^\N}\varphi(x)$.
Since $\varphi$ satisfies the Dini condition $\sum_{n\geq 1}\text{Var}_n\varphi<\infty$, there exists $b>0$ such that for all $\textbf{v}\in \mathcal{L}$,
\[
\max_{x\in [\textbf{v}]}S_{|\textbf{v}|}\varphi(x) \le \min_{x\in [\textbf{v}]}S_{|\textbf{v}|}\varphi(x) +b.
\]
Thus we have
\begin{equation}\label{eq:proof:Lemma-claim-1: 6}
|[\textbf{u}]|_\varphi \ge  e^{-n\tau a^*}e^{-nb}\prod_{k=1}^n |[\textbf{u}_k']|_\varphi.
\end{equation}
Since $\textbf{u}_k'\in C_{\mathcal{G}}^{n_1}$, by definition, we have
\[
e^{-n_1}\le |[\textbf{u}_k']|_\varphi \le c_*e^{-n_1} ,
\]
where we recall that $c_*=e^{\min_{x\in \Lambda^\N}\varphi(x)}$.
Putting the above estimates together, we obtain that for all $\textbf{u}\in (A_1\Lambda^\tau)^n$,
\begin{equation}\label{eq:proof:Lemma-claim-1: 7}
|[\textbf{u}]|_\varphi \ge  e^{-n(n_1+C)},
\end{equation}
where $C>0$ is some constant only depending on $\varphi$ and $\Lambda^\N$.  By similar arguments, we can show that for each
$\textbf{u}\in (A_1\Lambda^\tau)^n$,
\begin{equation}\label{eq:proof:Lemma-claim-1: 8}
|[\textbf{u}]|_\varphi \le  e^{-n(n_1-C')}
\end{equation}
for some constant $C'>0$ that only depends on $\varphi$ and $\Lambda^\N$.
By \eqref{eq:proof:Lemma-claim-1: 5} and \eqref{eq:proof:Lemma-claim-1: 7},  we have for all $\textbf{u}\in (A_1\Lambda^\tau)^n$,
\[
\frac{\log\nu_1'([\textbf{u}])}{\log |[\textbf{u}]|_\varphi } \ge \frac{\log \sharp(A_1)}{n_1+C}.
\]
As $C$ only depends on $\varphi$ and $\Lambda^\N$, we may assume $n_1$ was chosen large enough so that
\[
 \frac{\log \sharp(A_1)}{n_1+C}\ge  \frac{\log \sharp(A_1)}{n_1}-\frac{\epsilon_1}{4}.
\]
Since $ \frac{\log \sharp(A_1)}{n_1}\ge \alpha+\frac{\epsilon_1}{2}$, we conclude that
\begin{equation}\label{eq:proof:Lemma-claim-1: 9}
\liminf_{n\to\infty}\frac{\log\nu_1'([x|^n_1])}{\log |[x|^n_1]|_\varphi } \ge \alpha+\frac{\epsilon_1}{4}
\end{equation}
for $\nu_1'$-a.e.  $x$.  By the relations \eqref{eq:proof:Lemma-claim-1: 3} and \eqref{eq:proof:Lemma-claim-1: 4} concerning $\nu_1'$ and $\nu_1$, and the fact that the shift map $\sigma$ is bi-Lipschitz outside finitely many points, we infer that for $\nu_1$-a.e.  $x$, the estimate \eqref{eq:proof:Lemma-claim-1: 9} still holds true if we replace $\nu_1'$ by $\nu_1$.  It follows that $\dim_{\rm H}^{\varphi}\nu_1\ge \alpha+\frac{\epsilon_1}{4}$. Moreover,  if $\nu_1=\int \nu_{1,\omega}d\eta(\omega)$ is the ergodic decomposition of $\nu_1$, then for $\eta$-a.e.  $\omega$, we also have $\dim_{\rm H}^{\varphi}\nu_{1,\omega}\ge \alpha+\frac{\epsilon_1}{4}$. That is, almost every ergodic component of $\nu_1$ still has $\varphi$-Hausdorff dimension $\ge  \alpha+\frac{\epsilon_1}{4}$.

Let us record a property of $\nu_1$ that we will use later. By the estimates \eqref{eq:proof:Lemma-claim-1: 7} and \eqref{eq:proof:Lemma-claim-1: 8}, and the fact that $\nu_1'$ is supported on $(A_1\Lambda^\tau)^\N$, we have for $\nu_1'$-a.e.  $x$,
\begin{equation}\label{eq:proof:Lemma-claim-1: 10}
\lambda_1-o_{n_1}(1)\le \liminf_{n\to\infty}\frac{\log|[x|^n_1]|_\varphi }{-n }\le \limsup_{n\to\infty}\frac{\log|[x|^n_1]|_\varphi }{-n }\le \lambda_1+o_{n_1}(1),
\end{equation}
where $\lambda_1=\frac{1}{\sharp A_1}\sum_{\textbf{u}\in A_1}|\textbf{u}|$.  Again, by the relation between $\nu_1'$ and $\nu_1$ (recall\eqref{eq:proof:Lemma-claim-1: 3} and \eqref{eq:proof:Lemma-claim-1: 4})  and the almost bi-Lipschitz property of the shift map $\sigma$, we infer that for $\nu_1$-a.e.  $x$, the estimate \eqref{eq:proof:Lemma-claim-1: 10} still holds.   By the classical ergodic theorem,  if $\mu$ is an ergodic measure on $(\Lambda^\N,\sigma)$, then for $\mu$-a.e.  $x$,
\begin{equation}\label{eq:proof:Lemma-claim-1: 11}
\lim_{n\to\infty}\frac{\log|[x|^n_1]|_\varphi }{-n }=\int \varphi(x)d\mu(x)=:\lambda(\mu).
\end{equation}
Recall that $\nu_1=\int \nu_{1,\omega}d\eta(\omega)$ is the ergodic decomposition of $\nu_1$.  Then from the fact that \eqref{eq:proof:Lemma-claim-1: 10} holds for $\nu_1$-a.e.  $x$,  we infer that  for $\eta$-a.e.  $\omega$, and $\nu_{1,\omega}$-a.e.  $x$,  the estimate \eqref{eq:proof:Lemma-claim-1: 10}  still holds.  Thus by \eqref{eq:proof:Lemma-claim-1: 11},  it follows that for $\eta$-a.e.  $\omega$,
\begin{equation}\label{eq:proof:Lemma-claim-1: 12}
\int \varphi(x)d\nu_{1,\omega}(x)\in (\lambda_1-o_{n_1}(1),\lambda_1+o_{n_1}(1)).
\end{equation}

Now,  let
\[
X_1'=(A_1\Lambda^\tau)^\N  \  \textrm{ and } \  X_1=\bigcup_{j=0}^{m_1+\tau-1}\sigma^{j} X_1'.
\]
Then $X_1$ is compact and shift-invariant.  Since $\sigma$ is bi-Lipschitz outside finitely many points,  we have
\[
\dim_{\rm H}^{\varphi}X_1'=\dim_{\rm H}^{\varphi}X_1 \ \textrm{ and } \  \overline{\dim}_{\rm B}^{\varphi} X_1'=\overline{\dim}_{\rm B}^{\varphi}X_1.
\]
Observe  that $\nu_1'$ is supported on $X_1'$ and $\nu_1$ is supported on $X_1$.

Let us estimate the upper box dimension of $X_1'$. We have
\[
 \overline{\dim}_{\rm B}^{\varphi}X_1' \le \limsup_{n\to\infty} \frac{\log\sharp(A_1\Lambda^\tau)^n}{-\log \max_{\textbf{u}\in (A_1\Lambda^\tau)^n}|[\textbf{u}]|_\varphi }.
\]
Using the fact  $\log\sharp(A_1\Lambda^\tau)^n=n(\log\sharp A_1+\tau\log |\Lambda|)$ and by the estimate  \eqref{eq:proof:Lemma-claim-1: 8},  we get
\[
\overline{\dim}_{\rm B}^{\varphi}X_1' \le \frac{\log\sharp A_1+\tau\log |\Lambda|}{n_1-C'}.
\]
As $C'$ only depends on $\Lambda^\N$ and $\varphi$, we may assume $n_1$ was chosen large enough so that we have
\[
 \frac{\log\sharp A_1+\tau\log |\Lambda|}{n_1-C'} \le  \frac{\log\sharp A_1}{n_1}+\epsilon_1\le \alpha+2\epsilon_1.
\]
Hence we have
\[
\dim_{\rm H}X_1\le \overline{\dim}_{\rm B}^{\varphi}X_1 =\overline{\dim}_{\rm B}^{\varphi}X_1' \le \alpha+2\epsilon_1.
\]

In summary,  we have constructed a set $X_1$,  which is $\sigma$-invariant and compact, and a $\sigma$-invariant measure $\nu_1$, supported both on $X_1$ and on $\Sigma$,  such that
\begin{itemize}
\item[(1)] $ \dim_{\rm H}^{\varphi}X_1, \dim_{\rm H}^{\varphi}\nu_1\in (  \alpha+\frac{\epsilon_1}{4}, \alpha+2\epsilon_1)$;
\item[(2)] $\int \varphi(x)d\nu_1(x)\in (\lambda_1-o_{n_1}(1),\lambda_1+o_{n_1}(1))$.
\end{itemize}
Up to replacing $\nu_1$ by one of its ergodic component, we can also assume that $\nu_1$ is ergodic.

{\bf Step 2.}  Let us now fix $\epsilon_2>0$ with $\epsilon_2<\epsilon/16$. Let $a_2=\min_{\textbf{u}\in A_1\Lambda^\tau}\min_{x\in [\textbf{u}]}S_{|\textbf{u}|}\varphi(x)$. Let us fix a large $n_2$ that we will specify later.  Let
\[
\mathcal{C}^{n_2}_2=\left\{\textbf{u}\in (A_1\Lambda^\tau)^*: e^{-n_2}\le |[\textbf{u}]|_{\varphi}< e^{a_2}e^{-n_2} \right\}.
\]
Then $\{[\textbf{u}]:\textbf{u}\in \mathcal{C}^{n_2}_2 \}$ forms a partition of $X_1'=(A_1\Lambda^\tau)^\N$.
Since $\nu_1'$ is supported both on $X_1'$ and on $\Sigma$, we have $\dim_{\rm H} X_1'\cap\Sigma\ge \dim_{\rm H}\nu_1'\ge \alpha+\frac{\epsilon_1}{4}$.
Thus for large enough $n_1$, we have
\[
\frac{\log \sharp\{ \textbf{u}\in (A_1\Lambda^\tau)^*: \textbf{u}\in \mathcal{C}^{n_2}_2\cap \mathcal{L} \}}{n_2} \ge  \alpha+\frac{\epsilon_1}{8}.
\]
Since $\epsilon_2<\epsilon/16$, assuming $n_1$ is large enough, there exists $A_2'\subset  \mathcal{C}^{n_2}_2\cap \mathcal{L}$ such that
\[
\frac{\log \sharp A_2'}{n_2} \in (\alpha+\frac{\epsilon_2}{2}, \alpha+\epsilon_2).
\]
For each $\textbf{u}'\in A_2'$, let $\textbf{u}$ be the word obtained by erasing the last $\tau$ letters of $\textbf{u}'$. Thus, if $\textbf{u}'\in A_2'$ with $\textbf{u}'=\textbf{u}_1\textbf{v}_1\cdots \textbf{u}_k\textbf{v}_k\in (A_1\Lambda^\tau)^k$, the
$\textbf{u}=\textbf{u}_1\textbf{v}_1\cdots \textbf{u}_k$.  In particular, we have $\textbf{u}\in (A_1\Lambda^\tau)^{k-1}A_1$.
Let
\[
A_2=\{\textbf{u}: \textbf{u}'\in A_2'\}.
\]
Note that the cylinders in $A_2'$ are disjoint, so are those in $A_2$.

Now, we can repeat a similar argument as in  {\bf Step 1.} to construct measures $\nu_2'$ and $\nu_2$ such that
\begin{itemize}
\item[(1)] the measure $\nu_2'$ is supported both on $\Sigma$ and on $X_2'=(A_2\Lambda^\tau)^\N$;
\item[(2)] the measure $\nu_2$ is $\sigma$-invariant and ergodic, it is supported both on $\Sigma$ and on $X_2=\bigcup_{j=0}^{m_2+\tau-1}\sigma^{j} X_2'$, where $m_2=\max\{| \textbf{u}|: u\in A_2\}$;
\item[(3)] $\dim_{\rm H}^{\varphi}X_2'=\dim_{\rm H}^{\varphi}X_2\in (\alpha+\frac{\epsilon_2}{4},\alpha+2\epsilon_2)$;
\item[(4)] $\dim_{\rm H}^{\varphi}\nu_2'=\dim_{\rm H}^{\varphi}\nu_2\in (\alpha+\frac{\epsilon_2}{4},\alpha+2\epsilon_2)$;
\item[(5)] there exists $\lambda_2>0$ such that $\int \varphi d\nu_2\in (\lambda_2-o_{n_2}(1),\lambda_2+o_{n_2}(1))$.
\end{itemize}

{\bf Inductive Steps.}  We can continue the above process so that in the end we will construct three sequences $(X_n)_n,  (\nu_n)_n$ and $(\epsilon_n)_n$ such that $\epsilon_n\searrow 0$ as $n\to \infty$ and for each $n$ we have
 \begin{itemize}
\item[(1)] $X_n$ is $\sigma$-invariant and compact,
\item[(2)] $X_{n+1}\subset X_{n}$,
\item[(3)] $\nu_n$ is $\sigma$-invariant and ergodic, it is supported both on $\Sigma$ and on $X_n$,
\item[(3)] $\dim_{\rm H}^{\varphi}X_n ,  \dim_{\rm H}^{\varphi}\nu_n\in (\alpha+\frac{\epsilon_n}{4},\alpha+2\epsilon_n)$,
\item[(5)] there exists $\lambda_n>0$ such that $\int \varphi d\nu_n\in (\lambda_n-\epsilon_n,\lambda_n+\epsilon_n)$.
\end{itemize}

Let \[X_\infty=\bigcap_{n\ge 1} X_n.\]
Up to taking a subsequence of $\{\nu_n\}_n$, we may assume that
\[
\nu_n \to  \nu_\infty \ \textrm{ as } n\to \infty,
\]
where $\nu_\infty$ is a $\sigma$-invariant measure.
Since for each $n$,  $\nu_n$ is supported both on $\Sigma$ and on $X_n$, the measure $\nu_\infty$ is supported on $\Sigma$ and on $X_\infty$.  Let $F$ denote the topological support of $\nu_\infty$.  Then $F\subset \Sigma$ and it  is $\sigma$-invariant and compact.  We claim that $\dim_{\rm H}^{\varphi} F=\alpha$.
Since $\dim_{\rm H}^{\varphi} X_n\le \alpha+2\epsilon_n$ and $\epsilon_n \searrow 0$,  we have
\[
\dim_{\rm H}^{\varphi} F\le \limsup_{n}(\alpha+2\epsilon_n)=\alpha.
\]
Since $\nu_\infty$ is supported on $F$,  it suffices to show that  $\dim_{\rm H}^{\varphi}\nu_\infty\ge \alpha$.
To show this,  we first notice that since measure entropy is upper semi-continuous on symbolic spaces,  we have
\begin{equation}\label{eq:proof:Lemma-claim-1: 13}
h(\nu_\infty,\sigma)\ge \limsup_{k\to \infty} h(\nu_{n_k},\sigma).
\end{equation}
Let $\nu_\infty=\int \nu_\omega d\eta(\omega)$ be the ergodic decomposition of $\nu_\infty$. It is well known that
\[
h(\nu_\infty,\sigma)=\int h(\nu_\omega,\sigma) d\eta(\omega).
\]
On the other hand, since $\nu_\infty=\lim_{n}\nu_{n}$ and $\varphi$ is continuous, we have
\begin{equation}\label{eq:proof:Lemma-claim-1: 14}
\lambda(\nu_\infty)=\int\varphi(x)d\nu_{\infty}=\lim_{n\to\infty}\int\varphi(x) d\nu_{n}.
\end{equation}
Recall that for each $n$, $\dim_{\rm H}^{\varphi} \nu_{n}\in (\alpha-o_n(1),\alpha+o_n(1))$ and we have (noticing that $\nu_{n}$ is ergodic)
\[
\dim_{\rm H}^{\varphi}\nu_{n}=\frac{h(\nu_{n},\sigma)}{\int \varphi(x)d\nu_{n}(x)}.
\]
Combining this with \eqref{eq:proof:Lemma-claim-1: 13} and \eqref{eq:proof:Lemma-claim-1: 14},  we get
\begin{equation}\label{eq:proof:Lemma-claim-1: 15}
\frac{h(\nu_{\infty},\sigma)}{\int \varphi(x)d\nu_{\infty}(x)}\ge \alpha.
\end{equation}
On the other hand, since for $\eta$-a.e.  $\omega$, $\nu_\omega$ is supported on $F$, we must have
\begin{equation}\label{eq:proof:Lemma-claim-1: 16}
\dim_{\rm H}^{\varphi}\nu_\omega=\frac{h(\nu_{\omega},\sigma)}{\int \varphi(x)d\nu_{\omega}(x)}\le \dim_{\rm H}^{\varphi} F\le \alpha.
\end{equation}
Now in view of the facts $$h(\nu_\infty,\sigma)=\int h(\nu_\omega,\sigma) d\eta(\omega) \  \textrm{ and }\  \int \varphi(x)d\nu_{\infty}(x)=\int\left(\int \varphi(x)d\nu_{\infty}(x)\right)d\eta(\omega),$$
it is easily inferred from \eqref{eq:proof:Lemma-claim-1: 15}  that the following must hold  for $\eta$-a.e. $\omega$:
\[
\dim_{\rm H}^{\varphi}\nu_\omega= \frac{h(\nu_{\omega},\sigma)}{\int \varphi(x)d\nu_{\omega}(x)}\geq\alpha.
\]
This together with \eqref{eq:proof:Lemma-claim-1: 16} implies that $\dim_{\rm H}^{\varphi}\nu_\omega=\alpha$ holds for $\eta$-a.e. $\omega$, in particular,
$\dim_{\rm H}^{\varphi}\nu_\infty=\alpha.$
\qed



\section{Proof of Theorem \ref{beta} and \ref{CER}}
\subsection{Proof of Theorem \ref{beta}}
Let $\varphi(\omega)=\log\beta$ for any $\omega\in\Sigma_\beta$, then $\varphi$ induces the usual metric $d_\varphi(\omega, \omega')=\beta^{-n}$, where $n$ is the minimal integer such that $\omega_{n+1}\neq \omega'_{n+1}$. With this metric $d_\varphi$, we use $\dim_{\rm H}$ to replace $\dim_{\rm H}^\varphi$.  Now we project the symbolic space to Euclidean space by
the natural projection $\pi_\beta: \Sigma_\beta\to [0,1]$ which is defined as $\pi_\beta(\omega)=\sum_{n=1}^\infty\frac{\omega_n}{\beta^n}$ for $\omega=\omega_1\omega_2\dots\in\Sigma_\beta$. We need the following two propositions.
\begin{prop}\label{beta-spec}
  Let $\beta>1$. Then the $\beta$-shift $\Sigma_\beta$ and the potential function $\varphi$ satisfy Condition (1) and (2) in Theorem \ref{mainthm}.
\end{prop}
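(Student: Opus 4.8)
The plan is to recall the classical structure theory of $\beta$-shifts due to Parry, which says that $\Sigma_\beta$ is completely governed by the (quasi-greedy) expansion $\mathfrak{b}=(b_1,b_2,\dots)$ of $1$: a word $w_1\cdots w_n$ is admissible precisely when every suffix $w_i w_{i+1}\cdots w_n$ is lexicographically no larger than $b_1 b_2\cdots b_{n-i+1}$. From this one extracts the decomposition $\mathcal{L}=\mathcal{C}^p\mathcal{G}\mathcal{C}^s$ with $\mathcal{C}^p=\{\varnothing\}$, $\mathcal{C}^s=\{b_1\cdots b_k:k\ge 0\}$ the set of prefixes of $\mathfrak{b}$, and $\mathcal{G}=\{w\in\mathcal{L}: w \text{ is not a prefix of }\mathfrak b, \text{ and } \sigma^i w \text{ is admissible with room, i.e. } w_{i+1}\cdots w_{|w|}\prec b_1\cdots b_{|w|-i}\ \forall i\}$; this is the standard decomposition used in \cite{CT13,CT21} for $\beta$-shifts. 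The point of this choice is that words in $\mathcal{G}$ have a uniform ``gap'' below the forbidden patterns, so any two of them can be freely concatenated after inserting a single separating symbol: concretely, one checks that for $\mathbf{i}_1,\dots,\mathbf{i}_k\in\mathcal{G}$ the word $\mathbf{i}_1 0\,\mathbf{i}_2 0\cdots 0\,\mathbf{i}_k$ is admissible, because each suffix either lies inside some $\mathbf{i}_j$ (where admissibility with room was assumed) or begins with a $0$ (which is always $\preceq b_1\cdots$). Hence $\mathcal{G}$ has the specification property with gap size $\tau=1$, verifying Condition~(1).

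For Condition~(2), the pressure gap condition, I would take $\varphi\equiv\log\beta$, so that $|[\mathbf{i}]|_\varphi=\beta^{-|\mathbf{i}|}$ and $\gamma=\dim^\varphi_{\rm H}\Sigma_\beta=1$ (this is the Ledrappier--Young / Furstenberg identity quoted after \eqref{usualmetric}, together with $h_{\rm top}(\sigma,\Sigma_\beta)=\log\beta$, a theorem of Rényi). Then $Z_n(\sigma,\mathcal{C}^p\cup\mathcal{C}^s,-\gamma\varphi)=\sum_{\mathbf{i}\in(\mathcal{C}^p\cup\mathcal{C}^s)_n}\beta^{-n}$. Since $\mathcal{C}^p=\{\varnothing\}$ contributes nothing for $n\ge 1$ and $\mathcal{C}^s_n$ consists of the single word $b_1\cdots b_n$ (if it is admissible) or is empty, we get $Z_n(\sigma,\mathcal{C}^p\cup\mathcal{C}^s,-\gamma\varphi)\le \beta^{-n}$ for every $n\ge 1$, and therefore $\sum_{n\ge 1}Z_n(\sigma,\mathcal{C}^p\cup\mathcal{C}^s,-\gamma\varphi)\le \sum_{n\ge1}\beta^{-n}=\frac{1}{\beta-1}<\infty$. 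Thus the pressure gap condition holds with room to spare. Also $\varphi$ is constant so $\mathrm{Var}_n\varphi=0$ for all $n$ and the Dini condition $\sum_n\mathrm{Var}_n\varphi<\infty$ in Theorem~\ref{mainthm} is trivial.

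The only genuinely delicate point is pinning down the correct definition of $\mathcal{G}$ and proving the free-concatenation claim with a \emph{uniform} gap size independent of the configuration; this is where the lexicographic characterization of admissibility must be used carefully, distinguishing the suffixes that fall strictly inside a block $\mathbf{i}_j$ from those that straddle a separator. Everything else—$h_{\rm top}(\sigma,\Sigma_\beta)=\log\beta$, $\dim^\varphi_{\rm H}\Sigma_\beta=1$, the one-word bound on $\mathcal{C}^s_n$—is either classical or an immediate counting estimate. I would also remark that for $\beta$ with eventually periodic or finite expansion of $1$ the set $\mathcal{C}^s$ is finite, so the gap condition is trivially satisfied, and that the argument makes no use of Schmeling's genericity result: it applies to \emph{every} $\beta>1$, which is exactly what is needed for Theorem~\ref{beta}.
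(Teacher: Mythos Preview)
Your proof is correct and follows essentially the same strategy as the paper: take $\mathcal{C}^p$ trivial, $\mathcal{C}^s$ the prefixes of the (quasi-greedy) expansion of $1$, and $\mathcal{G}$ a class of ``good'' words with specification, so that the pressure gap reduces to $\sharp\mathcal{C}^s_n\le 1$ and $\sum_n\beta^{-n}<\infty$. The only cosmetic difference is that the paper defines $\mathcal{G}$ as the collection of \emph{full} cylinders and obtains gap size $\tau=0$ by citing \cite{BW14,LL18} (concatenation of full words is full), whereas your lexicographic $\mathcal{G}$ is a slightly smaller subcollection and you insert a $0$-separator to get $\tau=1$; in fact your $\mathcal{G}$ is also closed under free concatenation (strict inequality at some position of each suffix propagates), so $\tau=0$ would already work for you as well.
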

\begin{proof}
  For Condition (1), recall that a word $u\in\mathcal{L}(\Sigma_\beta)$ (or cylinder $[u]$) is said to be full if $\sigma^{|u|}([u])=\Sigma_\beta$. Choose
  $$\mathcal{G}=\{u\in\mathcal{L}(\Sigma_\beta): \text{the cylinder}\ [u]\ \text{is full}\},$$
  $\mathcal{C}^p=\emptyset$, and $\mathcal{C}^s$ is the set of all prefixes of the infinite $\beta$-expansion of 1 and the empty word. Since every nonfull word $u\in\mathcal{L}(\Sigma_\beta)$ can be written as the concatenation of a full word and a prefix of the infinite $\beta$-expansion of 1 (see \cite[Corollary 3.4]{BW14} or \cite[Corollary 3.9]{LL18}), we know that $\mathcal{L}(\Sigma_\beta)\subset \mathcal{C}^p\mathcal{G}\mathcal{C}^s$, while for full word, we just choose empty word in $\mathcal{C}^s$. Moreover, $\mathcal{G}$ satisfies the specification property with $\tau=0$, which is given by the fact that the concatenation of any two full words is still full (see \cite[Lemma 3.2]{BW14} or \cite[Proposition 3.2 (1)]{LL18}).

  For Conditioin (2), note that $\beta^n\leq\sharp\Sigma_{\beta}^n\leq (\beta-1)^{-1}\beta^{n+1}$ (see \cite{Ren57}), thus Theorem \ref{GP} implies $\gamma=\dim_{\rm H}\Sigma_\beta=1$. Since $\#(\mathcal{C}^s\cap \mathcal{A}_\beta^n)=1$ for all $n\geq 1$, then checking $\sum_{n\geq 1} Z_n(\sigma, \mathcal{C}^p\cup\mathcal{C}^s, -\gamma\varphi)<\infty$ is amount to checking $\sum_{n\geq 1}e^{-S_n\varphi(\omega^1)}<\infty$, which is obvious, here $\omega^1$ is the infinite $\beta$-expansion of 1.
\end{proof}

The second propostition sataes that the set $E\subset\Sigma_\beta$ has same dimensions with its image $\pi_\beta(E)$ respectively.
\begin{prop}\label{projectionsets}
	Let $\beta>1$ and $E\subset\Sigma_\beta$. Then
	$$\dim_{\rm H} E=\dim_{\rm H} \pi_\beta(E),\ \overline{\dim}_{\rm B} E=\overline{\dim}_{\rm B} \pi_\beta(E),\ \underline{\dim}_{\rm B} E=\underline{\dim}_{\rm B} \pi_\beta(E),\ \text{and}\ \dim_{\rm P}E=\dim_{\rm P}\pi_\beta(E).$$
\end{prop}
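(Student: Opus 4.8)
The plan is to squeeze $\dim(\pi_\beta(E))$ between copies of $\dim(E)$ for each of the four notions of dimension, using that $\pi_\beta$ is Lipschitz for the ``$\le$'' direction and that $\pi_\beta$ meets the preimage--covering hypothesis of Lemma~\ref{keylemma2} for the ``$\ge$'' direction.

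For the upper bounds, note first that $\pi_\beta\colon(\Sigma_\beta,d_\varphi)\to[0,1]$ is Lipschitz: if $\omega,\omega'\in\Sigma_\beta$ first disagree in coordinate $n+1$ then $d_\varphi(\omega,\omega')=\beta^{-n}$, while bounding the geometric tail gives $|\pi_\beta(\omega)-\pi_\beta(\omega')|\le\frac{\lfloor\beta\rfloor}{\beta-1}\beta^{-n}$. Hence Lemma~\ref{keylemma1} with H\"older exponent $a=1$ gives $\dim_{\rm H}\pi_\beta(E)\le\dim_{\rm H}E$ together with the two box-dimension inequalities (valid for totally bounded $E$, since $\pi_\beta$ maps totally bounded sets to totally bounded sets), and packing dimension never increases under a Lipschitz map either, via the Tricot identity $\dim_{\rm P}A=\inf\{\sup_i\overline{\dim}_{\rm B}A_i:A\subset\bigcup_iA_i\}$ combined with the box estimate. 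This yields all four ``$\le$'' inequalities.

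For the reverse inequalities it is enough, by Lemma~\ref{keylemma2} with exponent $b=1$, to establish the following \emph{covering property}: there are constants $N=N(\beta)$ and $c=c(\beta)$ such that for every sufficiently small $\epsilon>0$ and every $x\in[0,1]$ the preimage $\pi_\beta^{-1}(B(x,\epsilon))$ is contained in a union of at most $N$ cylinders of $\Sigma_\beta$ of diameter at most $c\epsilon$. Granting this, applying Lemma~\ref{keylemma2} (and its box-dimension part) to $F=\pi_\beta(E)\subset[0,1]$, together with the inclusion $\pi_\beta^{-1}(\pi_\beta(E))\supseteq E$ and the monotonicity of the dimensions, yields $\dim_{\rm H}\pi_\beta(E)\ge\dim_{\rm H}E$ and the reverse box inequalities, and then $\dim_{\rm P}\pi_\beta(E)\ge\dim_{\rm P}E$ by applying the Tricot identity to each member of a countable cover of $\pi_\beta(E)$.

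It remains to prove the covering property, and this is where the real work lies. Fix $\epsilon$ and let $n$ be the integer with $\beta^{-n}\le\epsilon<\beta^{-(n-1)}$, so that every cylinder of generation $\ge n$ has diameter $\le\epsilon$. I would use two structural facts. First, $\pi_\beta$ is monotone for the lexicographic order on $\Sigma_\beta$, so the images $\{\pi_\beta([u]):u\in\mathcal L_n\}$ are closed intervals with pairwise disjoint interiors tiling $[0,1]$. Second, by Proposition~\ref{beta-spec} one has $\mathcal L=\mathcal G\mathcal C^s$ with $\mathcal G$ the full words and $\mathcal C^s$ the prefixes of the expansion $\omega^1$ of $1$, so each $u\in\mathcal L_n$ factors uniquely as $u=g\,\omega^1|_1^{j}$ with $g$ full and $0\le j\le n$, and $\pi_\beta([u])$ is then the subinterval of length $\beta^{-n}T_\beta^{j}(1)$ abutting the right endpoint of $\pi_\beta([g])$ (with the convention $T_\beta^0(1):=1$). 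The generation-$n$ cylinders whose image has length $\ge\tfrac12\beta^{-n}$: only $O(\beta)$ of them can meet $B(x,\epsilon)$, since their images are disjoint intervals of size comparable to $B(x,\epsilon)$. The remaining, ``short'', cylinders accumulate at the right endpoints $R_g:=\sup\pi_\beta([g])$ of the full cylinders $[g]$ with $|g|<n$; as $[g\,\omega^1|_1^{1}]\supset[g\,\omega^1|_1^{2}]\supset\cdots$ are nested, the part of $\pi_\beta^{-1}(B(x,\epsilon))$ lying near a fixed $R_g$ is contained in the single cylinder $[g\,\omega^1|_1^{k_g}]$ with $k_g=\min\{k:|g|+k\ge n\}$, of diameter $\le\beta^{-n}$; it then suffices to bound, uniformly in $x$ and $\epsilon$, the number of full words $g$ whose endpoint $R_g$ lies within $c\epsilon$ of $B(x,\epsilon)$, which one does generation by generation using that a full cylinder of length $m$ has image of length exactly $\beta^{-m}$ (so only $O(1)$ full cylinders meet $B(x,\epsilon)$ at each generation $m$ well below $n$, and the canonical cylinders $[g\,\omega^1|_1^{k_g}]$ absorb one another for nested $g$). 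The hard part is precisely this book-keeping: the $\beta$-shift is not of finite type, so within a fixed generation the cylinder images may be arbitrarily short relative to $\beta^{-n}$ (their size ratio is governed by $\inf_jT_\beta^{j}(1)$, which vanishes as soon as the forward orbit of $1$ under $T_\beta$ comes arbitrarily close to $0$), and one cannot simply count generation-$n$ cylinders meeting $B(x,\epsilon)$; the short cylinders must be reorganised around the endpoints of full cylinders as above in order to obtain a bound depending only on $\beta$.
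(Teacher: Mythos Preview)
Your overall architecture is correct and matches the paper's: Lipschitz gives one inequality, a covering property of $\pi_\beta^{-1}$ gives the other, and packing dimension follows from the box estimate via the Tricot formula. The gap is in the covering step. Lemma~\ref{keylemma2} needs a bound $N$ that is \emph{uniform in $\epsilon$}, and your sketch does not deliver this. Your absorption claim fails: if $g_1$ is a proper full prefix of a full word $g_2$, the level-$n$ cylinders $[g_1\omega^1|_1^{n-|g_1|}]$ and $[g_2\omega^1|_1^{n-|g_2|}]$ are in general two \emph{distinct} level-$n$ cylinders (they coincide only when $g_2=g_1\omega^1|_1^{|g_2|-|g_1|}$, which forces a periodicity of $\omega^1$ that does not hold for generic $\beta$). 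Consequently your generation-by-generation count of full words $g$ with $R_g$ near $x$ gives $O(n)=O(\log(1/\epsilon))$ cylinders, not $O(1)$, and Lemma~\ref{keylemma2} as stated does not apply.

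The paper sidesteps this in two ways. For the Hausdorff dimension it simply quotes \cite{Li19}. For the box dimensions it invokes the Bugeaud--Wang covering lemma \cite[Proposition~4.1]{BW14}: every interval of length $2\beta^{-\ell}$ is covered by at most $4(\ell+1)$ cylinders of order $\ell$. This yields
\[
N_{\beta^{-\ell}}(E)\le 4(\ell+1)\,N_{\beta^{-\ell}}(\pi_\beta(E)),
\]
and since $\log(4(\ell+1))/\ell\to 0$ the box-dimension inequalities follow by taking logarithms and letting $\ell\to\infty$; packing dimension then follows exactly as you wrote. In other words, the paper accepts the polynomial (in $\ell$) covering number and absorbs it in the limit, rather than trying to force a uniform constant. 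Your argument can be repaired the same way: drop the appeal to Lemma~\ref{keylemma2}, use the $4(\ell+1)$ bound directly for $\overline{\dim}_{\rm B}$ and $\underline{\dim}_{\rm B}$, and either cite \cite{Li19} for $\dim_{\rm H}$ or adapt the covering argument to Hausdorff measures with the subexponential factor.
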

\begin{proof}
  Note that $\dim_{\rm H} E=\dim_{\rm H} \pi_\beta(E)$ follows from Theorem 1.1 in \cite{Li19}.
  Since $\pi_\beta$ is Lipschitz, we have $\overline{\dim}_{\rm B} E\geq\overline{\dim}_{\rm B} \pi_\beta(E),\ \underline{\dim}_{\rm B} E\geq\underline{\dim}_{\rm B} \pi_\beta(E)$ and $\dim_{\rm P}E\geq\dim_{\rm P}\pi_\beta(E)$. So we just need to check the inverse inequalities. Let $\rho=\beta^{-\ell}$ and $N_\rho(\pi_\beta(E))$ be the smallest number of intervals of length $2\rho$ that cover $\pi_\beta(E)$. By the covering property of $\beta$-expansion (Proposition 4.1 in \cite{BW14}), each such interval can be covered at most $4(\ell+1)$ cylinders of order $\ell$, which implies that these $4(\ell+1)N_\rho(\pi_\beta(E))$ cylinders of order $\ell$ in $\Sigma_\beta$ cover the set $E$. Note that the diameter of every  cylinder of order $\ell$ in $\Sigma_\beta$ under the metric $d_\varphi$ with $\varphi(\omega)=\log\beta$, is $\beta^{-\ell}$, so
  $$N_{\beta^{-\ell}}(E)\leq 4(\ell+1)N_\rho(\pi_\beta(E)).$$
  It follows that $\overline{\dim}_{\rm B} E\leq\overline{\dim}_{\rm B} \pi_\beta(E)$ and $\underline{\dim}_{\rm B}E\leq\underline{\dim}_{\rm B} \pi_\beta(E)$. Furthermore, recall that
  $$\dim_{\rm P}E=\overline{\dim}_{\rm MB}E=\inf\{\sup\overline{\dim}_{\rm B}E_i:\ E\subset\cup_{i\geq 1}E_i\},$$
  we consider any sequence $F_i$ such that $\pi_\beta(E)\subset \cup_{i\geq 1}F_i$, which gives that $E\subset \cup_{i\geq 1}\pi_\beta^{-1}(F_i)$. Thus
  $$\dim_{\rm P}E\leq\sup_{i\geq 1}\overline{\dim}_{\rm B}\pi_\beta^{-1}(F_i)=\sup_{i\geq 1}\overline{\dim}_{\rm B}F_i.$$
  It follows that $\dim_{\rm P}E\leq\dim_{\rm P}\pi_\beta(E)$ by the arbitrariness of $\{F_i\}_{i\geq 1}$.
\end{proof}

\begin{prop}\label{invariant}
Let $\beta>1$ and $A\subset [0, 1)$ be a $T_{\beta}$-invariant closed set
(i.e. $T_{\beta}A\subset A$). Then
$\dim_{\rm H} A=\dim_{\rm B} A.$ In particular,
$\dim_{\rm H} \overline{\mathcal{O}_{T_\beta}(x)}=\dim_{\rm B} \overline{\mathcal{O}_{T_\beta}(x)}.$
\end{prop}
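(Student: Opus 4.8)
The plan is to lift $A$ to a subshift of $\Sigma_\beta$, where Theorem \ref{GP}(2) already guarantees that the Hausdorff and box dimensions agree, and then to transport this equality back down to $[0,1)$ through $\pi_\beta$ by means of Proposition \ref{projectionsets}. We may assume $A\ne\emptyset$, and since adding or deleting a single point affects none of the dimensions involved, we may replace $A$ by its closure $\bar A$ in $[0,1]$, noting that $\bar A\setminus A\subseteq\{1\}$.

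First I would form the set $\Sigma_A:=\{\omega\in\Sigma_\beta:\pi_\beta(\sigma^n\omega)\in\bar A\text{ for all }n\ge 0\}$. As $\pi_\beta$ and $\sigma$ are continuous and $\bar A$ is closed, $\Sigma_A$ is a closed subset of $\Sigma_\beta$, and it is plainly $\sigma$-invariant, so $(\Sigma_A,\sigma)$ is a subshift. I would then verify the sandwich $A\subseteq\pi_\beta(\Sigma_A)\subseteq\bar A$: the inclusion $\pi_\beta(\Sigma_A)\subseteq\bar A$ is the defining condition at $n=0$, while for $x\in A$ the greedy $\beta$-expansion $\omega(x)\in\Sigma_\beta$ satisfies $\sigma^n\omega(x)=\omega(T_\beta^n x)$ and hence $\pi_\beta(\sigma^n\omega(x))=T_\beta^n x\in A$ for every $n$, using $T_\beta A\subseteq A$; thus $\omega(x)\in\Sigma_A$.

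Next I would combine the two inputs. Taking $\varphi\equiv\log\beta$ so that $d_\varphi$ is the usual metric $\beta^{-n}$, Theorem \ref{GP}(2) applied to the subshift $\Sigma_A$ gives that $\dim_{\rm B}\Sigma_A$ exists and equals $\dim_{\rm H}\Sigma_A$. Proposition \ref{projectionsets} with $E=\Sigma_A\subset\Sigma_\beta$ then yields $\dim_{\rm H}\pi_\beta(\Sigma_A)=\dim_{\rm H}\Sigma_A$ and $\overline{\dim}_{\rm B}\pi_\beta(\Sigma_A)=\underline{\dim}_{\rm B}\pi_\beta(\Sigma_A)=\dim_{\rm B}\Sigma_A$, so $\dim_{\rm H}\pi_\beta(\Sigma_A)=\dim_{\rm B}\pi_\beta(\Sigma_A)$; since $A$, $\pi_\beta(\Sigma_A)$ and $\bar A$ differ pairwise by at most one point, we get $\dim_{\rm H}A=\dim_{\rm B}A$. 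For the final assertion, given $x\in[0,1)$ with greedy expansion $\omega$, the orbit closure $\overline{\mathcal{O}_\sigma(\omega)}$ is a subshift of $\Sigma_\beta$ and, by continuity of $\pi_\beta$ together with $\pi_\beta(\sigma^n\omega)=T_\beta^n x$, one has $\pi_\beta(\overline{\mathcal{O}_\sigma(\omega)})=\overline{\mathcal{O}_{T_\beta}(x)}$; applying the same reasoning to this subshift gives $\dim_{\rm H}\overline{\mathcal{O}_{T_\beta}(x)}=\dim_{\rm B}\overline{\mathcal{O}_{T_\beta}(x)}$.

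I expect the only delicate point to be the bookkeeping forced by the discontinuity of $T_\beta$ and the boundary point $1$: one must check that $\Sigma_A$ is genuinely shift-invariant and that $\pi_\beta$ really intertwines $\sigma$ with $T_\beta$ on the relevant sets, so that $\pi_\beta(\Sigma_A)$ recovers $A$ up to a negligible set. Once this is in place, the proposition is an immediate consequence of Theorem \ref{GP}(2) and Proposition \ref{projectionsets}.
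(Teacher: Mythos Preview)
Your proposal is correct and follows essentially the same route as the paper: lift $A$ to a subshift of $\Sigma_\beta$, invoke Theorem~\ref{GP}(2) there, and push the equality back through $\pi_\beta$ via Proposition~\ref{projectionsets}. The only difference is that the paper takes the lift to be simply $\pi_\beta^{-1}(A)$ and checks directly that it is closed and $\sigma$-invariant, whereas you build $\Sigma_A$ by intersecting along orbits and are a bit more explicit about the boundary point $1$; the two constructions agree up to the negligible set you identify, and the argument is otherwise identical.
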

\begin{proof}
Since $\pi_\beta$ is continuous and $A$ is closed, $\pi_\beta^{-1}(A)$ is closed under the usual metric $d_\varphi$ with $\varphi(\omega)=\log\beta$ for $\omega\in\mathcal{A}_\beta^\mathbb{N}$. Note that $\pi_\beta\circ\sigma=T_\beta\circ\pi_\beta$ and $A$ is $T_\beta$-invariant, we know that $\pi_\beta^{-1}(A)$ is $\sigma$-invariant.
So $\pi_\beta^{-1}(A)$ is a subshift). By Theorem 2.1 (ii) in \cite{GP97}, $\dim_{\rm H} \pi_\beta^{-1}(A)=\dim_{\rm B}\pi_\beta^{-1}(A).$ Applying Proposition \ref{projectionsets} gives that
	$$\dim_{\rm H} A=\dim_{\rm H} \pi_\beta^{-1}(A)=\dim_{\rm B}\pi_\beta^{-1}(A)=\dim_{\rm B} A$$
by noting that $\pi_\beta(\pi_\beta^{-1}A)=A$ due to the surjection of $\pi_\beta$.
\end{proof}

Now we retun to the proof of Theorem \ref{beta}, by projecting the level set $E_\sigma(\alpha)$ from symbolic space to Euclidean space, we obtain $$\dim_{\rm H}E_\sigma(\alpha)=\dim_{\rm H}\pi_\beta(E_\sigma(\alpha)).$$
Note that $\pi_\beta(\overline{\mathcal{O}_\sigma(\omega)})=\overline{\mathcal{O}_{T_\beta}(\pi_\beta(\omega))}$ for all $\omega\in\Sigma_\beta$, we have
$$\pi_\beta(E_\sigma(\alpha))=\{x\in [0, 1]: \dim_{\rm H} \overline{\mathcal{O}_{T_\beta}(x)}=\alpha\}=E_{T_\beta}(\alpha).$$
Applying Theorem \ref{mainthm} to $(\Sigma_\beta, \sigma)$, we know that $\dim_{\rm H}E_{T_\beta}(\alpha)=\alpha$ for any $\alpha\in [0, 1]$.
\qed

\subsection{Proof of Theorem \ref{CER}}
It is known that the expanding property guarantees that $(X, T)$ has Markov partitions of arbitrarily small size (see \cite{GP97}, Page 161). In particular, $(X, T)$ is a factor of a subshift of finite type. By the definition of CER, $T: X\rightarrow X$ is an open mapping. Replacing the map $T$ by its power if necessary, we may assume that
\begin{equation*}
  1<a\leq \|(T^n)'(x)\|\leq b<\infty, \forall x\in X
\end{equation*}
for some constants $a, b$. Since $X$ is compact, there exists $r>0$ and $\kappa>1$ such that for any $x, y\in X$ with $d(x, y)\leq r$, we have
\begin{equation}\label{expanding}
  d(T(x), T(y))\geq\kappa d(x, y).
\end{equation}
Let $\{Q_1,\ldots, Q_m\}$ be a Markov partition for $(X, T)$ with $\diam(Q_i)\leq\min\{r, \kappa^{-1}\}$, that is, each $Q_i$ is a closed subset of $X$ and $X=\cup_{i=1}^m Q_i$; the relative interior of $Q_i$ in $X$ is dense in $Q_i$ and the relative interiors of $Q_i$ and $Q_j$ are disjoint for $i\neq j$; each $T(Q_i)$ is a union of $Q_j$. Let $\Lambda=\{1, \ldots, m\}$ and $\Omega=\Lambda^{\mathbb{N}}$, now we define the symbolic extension of $(X, T)$ by
\begin{equation*}
  \Sigma_T=\{w\in\Omega:\; Q_{w_i}\subset T(Q_{w_{i-1}})\;\text{for all}\; i\geq 1\}.
\end{equation*}
We call a finite sequence $\tau\in\Lambda^n$ \emph{admissible} if $Q_{\tau_i}\subset T(Q_{\tau_{i-1}})$ for all $1\leq i< n$ and an infinite sequence $w\in\Omega$ is called \emph{admissible} if $w|_1^n$ is admissible for any $n\geq 1$. Then $\Sigma_T$ consists of all admissible sequences in $\Omega$. For any $w\in\Sigma_T$ and $n\geq 1$, set
\begin{equation*}
   Q_n(w):=\bigcap_{j=0}^{n-1}T^{-j}(Q_{w_{i_j}}).
\end{equation*}
By \eqref{expanding}, we have
\begin{equation*}
   \diam{Q_n(w)}\leq\kappa^{-1} \diam{Q_{n-1}(w)}\leq \kappa^{-n}.
\end{equation*}
Consequently, for each $w\in\Sigma_T$, the set $\cap_{n\geq 0}Q_n(w)$ is a singleton, denoted by $\pi(w)$. Thus, we define a surjective map $\pi: \Sigma_T\rightarrow X$, which satisfies $\pi\circ \sigma(w)=T\circ\pi(w)$ for any $w\in\Sigma_T$. Let $\varphi(w)=-\log |T'(\pi(w))|$, which is a strictly positive continuous function on $\Sigma_T$ and induces a metric $d_{\varphi}$ on $\Sigma_T$ as in Section \ref{setting}. The following proposition concerning the regularity property of $\pi: \Sigma_T\rightarrow X$ was proved in \cite{GP97}.

\begin{prop}\label{CER_regularity}(see \cite{GP97}, Page 162-163)
  The projection map $\pi$ from the metric space $(\Sigma_T, d_{\varphi})$ to $(X, d)$ satisfies the hypothesis of Lemma \ref{keylemma1} and Lemma \ref{keylemma2} with $a, b$ arbitrarily close to 1. Consequently, for any $E\subset\Sigma_T$,
  \begin{equation*}
    \dim_{\rm H}E=\dim_{\rm H}\pi(E),\; \overline{\dim}_{\rm B}E=\overline{\dim}_{\rm B}\pi(E)\; \text{and}\;\underline{\dim}_{\rm B}E=\underline{\dim}_{\rm B}\pi(E).
  \end{equation*}
\end{prop}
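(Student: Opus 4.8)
Following \cite{GP97}, the plan is to transfer dimensions between $(\Sigma_T,d_\varphi)$ and $(X,d)$ through a comparison of the symbolic cylinders of $\Sigma_T$ with the Markov cylinders $Q_n(w)\subset X$, and then to invoke Lemmas \ref{keylemma1} and \ref{keylemma2}. The starting point is the identity $|[w|_1^n]|_{\varphi}=e^{-S_n^*\varphi(w)}=\big(\min_{x\in Q_n(w)}|(T^n)'(x)|\big)^{-1}$, immediate from the description of $|\cdot|_{\varphi}$ in Section \ref{setting} and the chain rule: the $\varphi$-size of the symbolic $n$-cylinder through $w$ is exactly the reciprocal of the minimal expansion of $T^n$ over $Q_n(w)$.

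Next I would establish the geometric comparison. Since $T^n$ maps $Q_n(w)$ conformally and bijectively onto some partition element $Q_j$ (injectivity coming from \eqref{expanding}), and each $Q_j$ both contains and is contained in metric balls of definite radii, the mean value inequality gives $\diam Q_n(w)\le D_0\,|[w|_1^n]|_{\varphi}$ for an absolute constant $D_0$, while $Q_n(w)$ contains a ball of radius at least $c\,e^{-\Delta_n(w)}\,|[w|_1^n]|_{\varphi}$, where $\Delta_n(w):=\sup_{Q_n(w)}\log|(T^n)'|-\inf_{Q_n(w)}\log|(T^n)'|$ is the distortion of $T^n$ on $Q_n(w)$. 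Writing $\log|(T^n)'|=\sum_{j=0}^{n-1}\log|T'|\circ T^j$, using $\diam T^j(Q_n(w))\le\kappa^{-(n-j)}$ from \eqref{expanding}, and invoking the uniform continuity of $x\mapsto\log|T'(x)|$ on the compact set $X$, one obtains $\Delta_n(w)\le\sum_{i=1}^n\omega(\kappa^{-i})$, where $\omega$ is the modulus of continuity of $\log|T'|$; since $\omega(\kappa^{-i})\to0$ this is $o(n)$, uniformly in $w$, so the distortion is subexponential. (If $T$ were $C^{1+\alpha}$ the series $\sum_i\omega(\kappa^{-i})$ would converge, the distortion would be uniformly bounded, and one could take $a=b=1$ below; it is the merely $C^1$ hypothesis that forces the exponents to come out only arbitrarily close to $1$.)

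With these estimates the two hypotheses follow. For Lemma \ref{keylemma1}: if $w,w'\in\Sigma_T$ have longest common prefix of length $n$, then $\pi(w),\pi(w')\in Q_n(w)$, so $d(\pi w,\pi w')\le\diam Q_n(w)\le D_0\,|[w|_1^n]|_{\varphi}=D_0\,d_\varphi(w,w')$; hence $\pi$ is Lipschitz, a fortiori H\"older with any exponent $a\le1$, and Lemma \ref{keylemma1} yields $\dim_{\rm H}\pi(E)\le\dim_{\rm H}E$ together with the analogous inequalities for upper and lower box dimensions. For Lemma \ref{keylemma2}, fix $\delta>0$ and a small ball $B\subset X$ of radius $\epsilon$; for each $w\in\pi^{-1}(B)$ let $n(w)$ be the least $n$ with $|[w|_1^n]|_{\varphi}<c\epsilon^{1-\delta}$. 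Since $\varphi$ is bounded away from $0$ and from $\infty$, consecutive cylinder sizes have bounded ratio, so $|[w|_1^{n(w)}]|_{\varphi}\asymp\epsilon^{1-\delta}$ and $n(w)\asymp\log(1/\epsilon)$. The cylinders $[w|_1^{n(w)}]$ cover $\pi^{-1}(B)$ by sets of $\varphi$-diameter $<c\epsilon^{1-\delta}$; their images $Q_{n(w)}(w)$ have pairwise disjoint interiors, all meet $B$, and by the previous paragraph each contains a ball of radius $\gtrsim\epsilon^{1-\delta}e^{-\Delta_{n(w)}(w)}=\epsilon^{1-\delta+o(1)}$. A packing estimate in the finite-dimensional ambient manifold $M$ then bounds the number of such cylinders by $\epsilon^{-o(1)}$, a subexponential factor that is harmless in the conclusion of Lemma \ref{keylemma2} once the limit $\delta\to0$ is taken. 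This gives $\dim_{\rm H}\pi(E)\ge(1-\delta)\dim_{\rm H}E$, and likewise for the box dimensions; combining with the Lipschitz bound and letting $\delta\to0$ yields the three asserted equalities.

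The main obstacle is the content of the second and third paragraphs: controlling the distortion of the inverse branches of a merely $C^1$ conformal map so that the Markov cylinders are round up to an $e^{o(n)}$ factor, and extracting from this the packing count that verifies the hypothesis of Lemma \ref{keylemma2}. This is precisely where conformality and the finiteness of $\dim M$ are used, and it is carried out on pages 162--163 of \cite{GP97}.
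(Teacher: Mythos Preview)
The paper gives no proof of its own here, simply citing \cite{GP97}, pp.~162--163, and your proposal is a correct reconstruction of that argument: the Lipschitz bound coming from the diameter estimate $\diam Q_n(w)\lesssim|[w|_1^n]|_\varphi$, the subexponential distortion $\Delta_n=o(n)$ in the merely $C^1$ setting, and the packing count for the preimage covering are exactly the ingredients used there. Your care in noting that the covering multiplicity comes out as $\epsilon^{-o(1)}$ rather than a fixed $N$---so that the conclusion of Lemma~\ref{keylemma2} still follows even though its hypothesis is not verified verbatim---is appropriate and at the right level of precision.
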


Now we return to the proof of Theorem \ref{CER}. By Proposition \ref{CER_regularity}, $\dim_{\rm H}E_T(\alpha)=\dim_{\rm H}\pi(E_T(\alpha)).$ Note that $\pi(\overline{\mathcal{O}_\sigma(\omega)})=\overline{\mathcal{O}_{T}(\pi(\omega))}$ for any $\omega\in\Sigma_T$, we have
$$\pi(E_\sigma(\alpha))=\{x\in X: \dim_{\rm H} \overline{\mathcal{O}_{T}(x)}=\alpha\}=E_T(\alpha),$$
Applying Theorem \ref{mainthm} to $(\Sigma_T, \sigma)$, we know that $\dim_{\rm H}E_{T}(\alpha)=\alpha$ for any $\alpha\in [0, \dim_{\rm H}X]$.
\qed

\section{Furstenberg level set}


\subsection{Proof of Theorem \ref{Furstenbergset}}
The multiplicatively dependent case relies on the next proposition.
\begin{prop}\label{dependentcase}
Let $p, q\geq 2$ be integers with $p\sim q$. Then for any $x\in [0, 1)$, $$\dim_{\rm H}\overline{\mathcal{O}_{T_p}(x)}=\dim_{\rm H}\overline{\mathcal{O}_{T_q}(x)}.$$
\end{prop}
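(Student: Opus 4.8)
The plan is to reduce everything to the case of two iterates of a single map and then pass to the symbolic coding. Since $p\sim q$, there are positive integers $a,b$ with $r:=p^{a}=q^{b}$, and because $T_{n}^{k}=T_{n^{k}}$ for all integers $n\ge 2$, $k\ge 1$, we have $T_{p}^{a}=T_{q}^{b}=T_{r}$. Hence it suffices to prove the following claim, applied once to $(n,k)=(p,a)$ and once to $(n,k)=(q,b)$ and then chained together:
\[
\text{for all integers }n\ge 2,\ k\ge 1\text{ and all }x\in[0,1),\qquad \dim_{\rm H}\overline{\mathcal{O}_{T_{n}}(x)}=\dim_{\rm H}\overline{\mathcal{O}_{T_{n^{k}}}(x)}.
\]
Write $r=n^{k}$. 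One inequality is immediate: $\mathcal{O}_{T_{r}}(x)=\{T_{n}^{km}x:m\ge 0\}\subseteq\mathcal{O}_{T_{n}}(x)$, so $\overline{\mathcal{O}_{T_{r}}(x)}\subseteq\overline{\mathcal{O}_{T_{n}}(x)}$ and thus $\dim_{\rm H}\overline{\mathcal{O}_{T_{r}}(x)}\le\dim_{\rm H}\overline{\mathcal{O}_{T_{n}}(x)}$. The point is therefore to prove the reverse inequality.

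For the reverse inequality I would split the $T_{n}$-orbit according to the residue of the iterate modulo $k$:
\[
\mathcal{O}_{T_{n}}(x)=\bigcup_{j=0}^{k-1}T_{n}^{j}\bigl(\mathcal{O}_{T_{r}}(x)\bigr),
\]
so, taking closures in $[0,1]$ and using that Hausdorff dimension is finitely stable, it is enough to show $\dim_{\rm H}\overline{T_{n}^{j}(\mathcal{O}_{T_{r}}(x))}\le\dim_{\rm H}\overline{\mathcal{O}_{T_{r}}(x)}$ for each $0\le j<k$. The only genuine obstacle here is that $T_{n}^{j}$ is discontinuous on $[0,1)$, so one may \emph{not} simply write $\overline{T_{n}^{j}(A)}=T_{n}^{j}(\overline{A})$ nor quote ``Lipschitz maps do not raise dimension''. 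I would remove this obstruction by lifting to the symbolic extension, where the shift is genuinely continuous. Let $\Sigma_{n}=\{0,\dots,n-1\}^{\mathbb{N}}$, with the shift $\sigma$ and the metric $d_{\varphi}$ associated with $\varphi\equiv\log n$ (so every rank-$\ell$ cylinder has diameter $n^{-\ell}$), and let $\pi_{n}\colon\Sigma_{n}\to[0,1]$, $\pi_{n}(\omega)=\sum_{i\ge 1}\omega_{i}n^{-i}$, be the base-$n$ coding. Then $\pi_{n}$ is Lipschitz, and since a ball of radius $n^{-\ell}$ in $[0,1]$ meets at most two rank-$\ell$ cylinders, $\pi_{n}$ satisfies the hypotheses of Lemma \ref{keylemma1} and Lemma \ref{keylemma2} with exponents equal to $1$; exactly as in Propositions \ref{projectionsets} and \ref{invariant} this gives $\dim_{\rm H}E=\dim_{\rm H}^{\varphi}\pi_{n}^{-1}(E)$ and $\dim_{\rm H}\pi_{n}(F)=\dim_{\rm H}^{\varphi}F$ for the sets in question.

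Fix $\omega_{x}\in\pi_{n}^{-1}(x)$. Using $\pi_{n}\circ\sigma=T_{n}\circ\pi_{n}$ (hence $\pi_{n}\circ\sigma^{k}=T_{r}\circ\pi_{n}$), the continuity of $\pi_{n}$ and the compactness of $\Sigma_{n}$, one checks that $\pi_{n}\bigl(\overline{\mathcal{O}_{\sigma}(\omega_{x})}\bigr)=\overline{\mathcal{O}_{T_{n}}(x)}$ and $\pi_{n}\bigl(\overline{\mathcal{O}_{\sigma^{k}}(\omega_{x})}\bigr)=\overline{\mathcal{O}_{T_{r}}(x)}$, where $\mathcal{O}_{\sigma^{k}}(\omega_{x})=\{\sigma^{km}\omega_{x}:m\ge 0\}$; combining this with the dimension identities for $\pi_{n}$ above reduces the claim to the purely symbolic equality
\[
\dim_{\rm H}^{\varphi}\overline{\mathcal{O}_{\sigma}(\omega_{x})}=\dim_{\rm H}^{\varphi}\overline{\mathcal{O}_{\sigma^{k}}(\omega_{x})}.
\]
Now $\sigma$ is a continuous (indeed $n$-Lipschitz for $d_{\varphi}$) self-map of the \emph{compact} space $\Sigma_{n}$, so each $\sigma^{j}$ is a closed map and closures commute with $\sigma^{j}$ and with finite unions; splitting the $\sigma$-orbit modulo $k$ exactly as before gives
\[
\overline{\mathcal{O}_{\sigma}(\omega_{x})}=\bigcup_{j=0}^{k-1}\sigma^{j}\bigl(\overline{\mathcal{O}_{\sigma^{k}}(\omega_{x})}\bigr).
\]
Each $\sigma^{j}$ is $n^{j}$-Lipschitz for $d_{\varphi}$, so it does not increase $\dim_{\rm H}^{\varphi}$, and by finite stability of dimension $\dim_{\rm H}^{\varphi}\overline{\mathcal{O}_{\sigma}(\omega_{x})}\le\dim_{\rm H}^{\varphi}\overline{\mathcal{O}_{\sigma^{k}}(\omega_{x})}$; the reverse inequality is trivial from $\mathcal{O}_{\sigma^{k}}(\omega_{x})\subseteq\mathcal{O}_{\sigma}(\omega_{x})$. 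Tracing the equalities back through $\pi_{n}$ yields $\dim_{\rm H}\overline{\mathcal{O}_{T_{n}}(x)}=\dim_{\rm H}\overline{\mathcal{O}_{T_{r}}(x)}$, which proves the claim and hence the proposition. I expect the main (mild) difficulty to be precisely the handling of the discontinuity of $T_{n}$: everything is routine once one works on $\Sigma_{n}$, where the shift is continuous and the transfer lemmas of \cite{GP97} apply.
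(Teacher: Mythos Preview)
Your proof is correct. Both you and the paper start from the same elementary reduction (write $p^{a}=q^{b}=:m$ and decompose the $T_{p}$-orbit into finitely many $T_{m}$-orbits), but you diverge at the step where one must show that all the pieces $\overline{\mathcal{O}_{T_{m}}(T_{p}^{j}x)}$ have the same Hausdorff dimension. The paper stays in $[0,1)$: it observes that $T_{p}^{j-i}$ maps one $T_{m}$-orbit onto another, and since $T_{p}^{j-i}$ is piecewise affine with finitely many branches, this forces equality of \emph{box} dimensions; it then invokes Proposition~\ref{invariant} (Hausdorff $=$ box for closed $T_{m}$-invariant sets) to transfer the conclusion back to Hausdorff dimension. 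You instead lift to the full shift $\Sigma_{n}$, where the shift $\sigma$ is globally Lipschitz and the closure--image and Lipschitz--dimension manipulations are legitimate on the nose; the dimension-preserving transfer lemmas for $\pi_{n}$ (as in Proposition~\ref{projectionsets}) then carry the symbolic equality back to $[0,1)$. Your route sidesteps the discontinuity of $T_{n}$ cleanly and never needs box dimension or Proposition~\ref{invariant}; the paper's route avoids the symbolic lift but relies on the Hausdorff~$=$~box identity for invariant closed sets. A minor quibble: a Euclidean ball of radius $n^{-\ell}$ can meet three rank-$\ell$ base-$n$ intervals, not two, but this only affects the constant $N$ in Lemma~\ref{keylemma2} and is harmless.
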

\begin{proof}
Since $p\sim q$, there exist $a, b\in\mathbb{N}$ such that $p^a=q^b$, we write this common value as $m$. Note that
$$\mathcal{O}_{T_p}(x)=\bigcup_{i=0}^{a-1}\mathcal{O}_{T_m}(T_p^ix),$$
we have
$$\overline{\mathcal{O}_{T_p}(x)}=\bigcup_{i=0}^{a-1}\overline{\mathcal{O}_{T_m}(T_p^ix)}$$
since it is a finite union. So
\begin{equation}\label{supdimension}
\dim_{\rm H}\overline{\mathcal{O}_{T_p}(x)}=\max_{0\leq i\leq a-1}\dim_{\rm H}\overline{\mathcal{O}_{T_m}(T_p^ix)}.
\end{equation}
 For any $0\leq i, j\leq a-1$, there exists an affine map $f$ such that $f(T_p^ix)=T_p^jx$, which gives that $f(\mathcal{O}_{T_m}(T_p^ix))=\mathcal{O}_{T_m}(T_p^jx)$. It follows that
 $$\dim_{\rm B}\overline{\mathcal{O}_{T_m}(T_p^ix)}=\dim_{\rm B}\mathcal{O}_{T_m}(T_p^ix)=\dim_{\rm B}\mathcal{O}_{T_m}(T_p^jx)=\dim_{\rm B}\overline{\mathcal{O}_{T_m}(T_p^jx)}.$$
 Since $\overline{\mathcal{O}_{T_m}(T_p^ix)}$ is $T_m$-invariant and closed, by Proposition \ref{invariant} we obtain that
 \begin{equation}\label{ijdimension}
 \dim_{\rm H}\overline{\mathcal{O}_{T_m}(T_p^ix)}=\dim_{\rm B}\overline{\mathcal{O}_{T_m}(T_p^ix)}=\dim_{\rm B}\overline{\mathcal{O}_{T_m}(T_p^jx)}=\dim_{\rm H}\overline{\mathcal{O}_{T_m}(T_p^jx)}.
 \end{equation}
 Together with \eqref{supdimension} and \eqref{ijdimension}, we have $\dim_{\rm H}\overline{\mathcal{O}_{T_p}(x)}=\dim_{\rm H}\overline{\mathcal{O}_{T_m}(x)}$. Also we have $\dim_{\rm H}\overline{\mathcal{O}_{T_q}(x)}=\dim_{\rm H}\overline{\mathcal{O}_{T_m}(x)}$.
 So it completes the proof.
\end{proof}

For the multiplicatively independent case, we need the following proposition.
\begin{prop}\label{upperFurstenberglevel}
Let $p, q\geq 2$ be integers with $p\nsim q$ and $s\in [0, 2]$. Then
$$\dim_{\rm H} F_{p, q}^s\leq \max\{0, s-1\}.$$
\end{prop}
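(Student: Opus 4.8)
The plan is to deduce Proposition \ref{upperFurstenberglevel} from the Shmerkin--Wu resolution of Furstenberg's intersection conjecture, \cite{Shm19,Wu19}, combined with a covering argument through finite-type approximations of orbit closures. Recall that when $p\nsim q$, for any closed $T_p$-invariant set $A\subseteq[0,1]$ and any closed $T_q$-invariant set $B\subseteq[0,1]$,
\[
\dim_{\rm H}(A\cap B)\le\max\{0,\dim_{\rm H}A+\dim_{\rm H}B-1\}.
\]
If $x\in F_{p,q}^s$ then $x\in\overline{\mathcal{O}_{T_p}(x)}\cap\overline{\mathcal{O}_{T_q}(x)}$, a set of dimension $\le\max\{0,s-1\}$; the difficulty is that these orbit closures vary with $x$, there are uncountably many of them, and Hausdorff dimension is not countably stable over an uncountable family, so one cannot simply take a union over $x$.

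To get around this, fix $x$ with infinite base-$p$ expansion $\bar x$ ($p$-adic rationals form a negligible countable set). For a set of words $W\subseteq\{0,\dots,p-1\}^n$ put
\[
A(W):=\{y\in[0,1):\ \text{every length-}n\text{ block of the base-}p\text{ expansion of }y\text{ lies in }W\},
\]
which is closed and $T_p$-invariant, and let $A_n(x):=A(W_n(x))$, where $W_n(x)$ is the set of length-$n$ words occurring in $\bar x$. Then $x\in A_n(x)$ and $\overline{\mathcal{O}_{T_p}(x)}\subseteq A_n(x)$; moreover the subshift underlying $A(W)$ has topological entropy at most $\frac{1}{n}\log\sharp W$ (a length-$kn$ admissible word splits into $k$ blocks of $W$, so there are at most $(\sharp W)^k$ of them), whence by the entropy/dimension formula for closed invariant subsets (the Ledrappier--Young formula recalled after \eqref{usualmetric}, together with Propositions \ref{projectionsets} and \ref{invariant}) $\dim_{\rm H}A(W)\le\frac{\log\sharp W}{n\log p}$. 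Since $\dim_{\rm H}\overline{\mathcal{O}_{T_p}(x)}=\lim_{n\to\infty}\frac{\log\sharp W_n(x)}{n\log p}$, if $\dim_{\rm H}\overline{\mathcal{O}_{T_p}(x)}\le u$ then for every $\epsilon>0$ we have $\dim_{\rm H}A_n(x)\le u+\epsilon$ for all $n\ge N(x,\epsilon)$. Define $B(W')$ for $W'\subseteq\{0,\dots,q-1\}^n$, the sets $B_n(x)$, and the analogous statement for $T_q$, symmetrically.

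The key point is that for each fixed $n$ there are only finitely many sets $A(W)$ (at most $2^{p^n}$) and $B(W')$ (at most $2^{q^n}$). Fix $\epsilon>0$ and $u,v\ge0$, and write $G_u^{(p)}=\{x:\dim_{\rm H}\overline{\mathcal{O}_{T_p}(x)}\le u\}$, $G_v^{(q)}=\{x:\dim_{\rm H}\overline{\mathcal{O}_{T_q}(x)}\le v\}$. By the previous paragraph, each $x\in G_u^{(p)}\cap G_v^{(q)}$, outside a countable set, lies for all large $n$ in $A_n(x)\cap B_n(x)$ with $\dim_{\rm H}A_n(x)\le u+\epsilon$ and $\dim_{\rm H}B_n(x)\le v+\epsilon$; hence $G_u^{(p)}\cap G_v^{(q)}$ is covered, up to a countable set, by the countable union over $n$ of the finite unions $\bigcup\{A(W)\cap B(W'):\dim_{\rm H}A(W)\le u+\epsilon,\ \dim_{\rm H}B(W')\le v+\epsilon\}$. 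By Shmerkin--Wu each $A(W)\cap B(W')$ occurring here has dimension $\le\max\{0,u+v-1+2\epsilon\}$, so $\dim_{\rm H}(G_u^{(p)}\cap G_v^{(q)})\le\max\{0,u+v-1+2\epsilon\}$; letting $\epsilon\to0$, $\dim_{\rm H}(G_u^{(p)}\cap G_v^{(q)})\le\max\{0,u+v-1\}$.

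Finally, fix $\delta>0$. Then $F_{p,q}^s$ is contained in the union of the sets $G_u^{(p)}\cap G_{s-u+\delta}^{(q)}$ over the finitely many nonnegative multiples $u$ of $\delta$ that are less than $s+\delta$: given $x$ with $\dim_{\rm H}\overline{\mathcal{O}_{T_p}(x)}+\dim_{\rm H}\overline{\mathcal{O}_{T_q}(x)}=s$, taking $u$ to be the least multiple of $\delta$ that is $\ge\dim_{\rm H}\overline{\mathcal{O}_{T_p}(x)}$ puts $x$ in the corresponding term. Each term has Hausdorff dimension $\le\max\{0,s+\delta-1\}$, so $\dim_{\rm H}F_{p,q}^s\le\max\{0,s+\delta-1\}$; letting $\delta\to0$ yields $\dim_{\rm H}F_{p,q}^s\le\max\{0,s-1\}$, which in particular gives $\dim_{\rm H}F_{p,q}^s=0$ when $s\le1$. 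The main obstacle is exactly this passage from a pointwise intersection bound to a uniform one; the device that makes it work is to approximate each orbit closure from outside by a finite-type invariant set and exploit that at each resolution $n$ there are only finitely many such sets.
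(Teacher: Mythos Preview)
Your proof is correct and takes essentially the same route as the paper: cover $F_{p,q}^s$ by a countable family of intersections of closed $T_p$- and $T_q$-invariant sets whose dimension sum is at most $s+\epsilon$, then apply the Shmerkin--Wu intersection theorem to each piece. The only differences are cosmetic: the paper uses $p^k$- and $q^l$-Cantor sets as the approximants (citing \cite[Proposition~9.3]{Wu19}) and works directly with the constraint $\dim_{\rm H}\tilde A+\dim_{\rm H}\tilde B\le s+\epsilon$, whereas you construct the approximants by hand as the finite-type subshifts $A(W)$, $B(W')$ and reach the sum constraint via an extra discretisation in $u$ at the end.
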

\begin{proof}
    The idea is to modify Wu's arguments for the proof of Theorem 9.4 in \cite{Wu19} by replacing 1 there using $s$. Write
	$$F_1(s)=\{(A, B): A\;\text{is a}\;T_p\;\text{-invariant closed set}, B\;\text{is a}\;T_q\;\text{-invariant closed set}, \dim_{\rm{H}} A+\dim_{\rm{H}} B\leq s\;\},$$
    and
	$$F_2(s)=\{(\widetilde{A}, \widetilde{B}): \widetilde{A}\;\text{is a}\;p^k\;\text{-Cantor set}, \widetilde{B}\;\text{is a}\;q^l\;\text{-Cantor set},
	\dim_{\rm{H}} \widetilde{A}+\dim_{\rm{H}} \widetilde{B}\leq s\;\text{for some}\; k, l\geq 1\},$$
   here by $m$-Cantor we mean a attractor the iterated function system $\{x/m+i/m\}_{i\in\Lambda}$ for some
   $\Lambda\subset\{0, \ldots, m-1\}$. Then for any
   $\epsilon>0$ we have,
\begin{align}\label{include}
   F_{p, q}^s\subset&\{x\in [0, 1): \dim_{\rm H}\overline{\mathcal{O}_{T_p}(x)}+\dim_{\rm H}\overline{\mathcal{O}_{T_q}(x)}\leq s\}\notag\\
   \subset&\bigcup_{(A, B)\in F_1(s)}A\cap B\subset\bigcup_{(\widetilde{A}, \widetilde{B})\in F_2(s+\epsilon)}\widetilde{A}\cap \widetilde{B},
\end{align}
   where the last inclusion is due to \cite[Proposition 9.3]{Wu19}.
   By \cite[Theorem 1.3]{Wu19}, for each $(\widetilde{A}, \widetilde{B})\in F_2(s+\epsilon)$, we have
   $$\dim_{\rm{H}} \widetilde{A}\cap\widetilde{B}\leq \max\{0, \dim_{\rm{H}} \widetilde{A}+\dim_{\rm{H}}\widetilde{B}-1\}\leq\max\{0, s+\epsilon-1\}.$$
   By \eqref{include}, $F_{p, q}^s$ is contained in a countable union of Hausdorff dimension less than $\max\{0, s+\epsilon-1\}$, thus $\dim_{\rm{H}}F_{p, q}^s\leq\max\{0, s+\epsilon-1\}$. Since $\epsilon>0$ is arbitrary, the result follows immediately.
\end{proof}

Now we are ready to give a proof of Theorem \ref{Furstenbergset}.
\begin{proof}[Proof of Theorem \ref{Furstenbergset}]
  (i) $p\sim q$. By Proposition \ref{dependentcase}, $$F_{p,q}^s=\{x\in [0,1): \dim_{\rm H}\overline{\mathcal{O}_{T_p}(x)}=\frac{s}{2}\},$$ which implies that $\dim_{\rm H} F_{p,q}^s=\frac{s}{2}$ by Theorem \ref{beta}.

  (ii) $p\nsim q$. The upper bound follows directly from Proposition \ref{upperFurstenberglevel}.
  The left is to estimate the lower bound. We just need to consider the case $s\in [1,2]$ since the upper bound gives
  $\dim_{\rm H}F_{p,q}^s=0$ whenever $s\in [0,1)$.
  First we apply the intermediate dimension property to $([0, 1), T_p)$, there exists $A\subset [0, 1)$ which is $T_p$-invariant and closed, with $\dim_{\rm H}A=s-1$.
     By Theorem 1(i) in \cite{GP97}, there exists an $T_p$-invariant ergodic measure $\mu$ on $A$ such that $\dim_{\rm H}\mu=\dim_{\rm H}A=s-1$.
     By Birkhoff's ergodic theorem, $\mu$ almost all $x$ have dense orbits in $A$, i.e., $\overline{\mathcal{O}_{T_p}(x)}=A$ and consequently $\dim_{\rm H}\overline{\mathcal{O}_{T_p}(x)}=\dim_{\rm H}A=s-1$. On the other hand, applying \cite[Theorem 1]{Host95} gives that
      $\mathcal{O}_{T_q}(x)=\{q^n x\ \text{mod}\ 1\}_{n\geq 1}$ is equidisbributed for $\mu$-almost every $x\in [0, 1)$. Consequently $\overline{\mathcal{O}_{T_q}(x)}=[0, 1]$ and $\dim_{\rm H}\overline{\mathcal{O}_{T_q}(x)}=1$ for $\mu$-almost all $x\in [0, 1)$. Therefore, the $\mu$ measure of
  \begin{equation*}
    E_{T_p}(s-1)\cap E_{T_q}(1)=\{x\in [0,1): \dim_{\rm H}\overline{\mathcal{O}_{T_p}(x)}=s-1, \dim_{\rm H}\overline{\mathcal{O}_{T_q}(x)}=1\}
  \end{equation*}
  is full, which implies that $\dim_{\rm H}E_{T_p}(s-1)\cap E_{T_q}(1)\geq \dim_{\rm H}\mu=s-1$. Since $E_{T_p}(s-1)\cap E_{T_q}(1)\subset F_{p, q}^{s}$, we have \begin{equation*}
    \dim_{\rm H}F_{p, q}^{s}\geq \dim_{\rm H}E_{T_p}(s-1)\cap E_{T_q}(1)\geq s-1.
  \end{equation*}
\end{proof}

\subsection{Intersection}
Let $p, q\geq 2$ be two integers and $\alpha_1, \alpha_2\in [0, 1]$. We consider the intersection $E_{T_p}(\alpha_1)\cap E_{T_q}(\alpha_2)$ and want to know its Hausdorff dimension. Proposition \ref{dependentcase} gives the following.
\begin{prop}
Let $p, q\geq 2$ be integers with $p\sim q$, and $\alpha_1, \alpha_2\in [0, 1]$. Then
$$\dim_{\rm H}E_{T_p}(\alpha_1)\cap E_{T_q}(\alpha_2)=\left\{
  \begin{array}{ll}
   \alpha_1, &\text{if}\; \alpha_1=\alpha_2;\\
  0, &\text{if}\; \alpha_1\neq\alpha_2.
  \end{array}
\right.$$
\end{prop}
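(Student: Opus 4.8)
The plan is to reduce everything to Proposition \ref{dependentcase} and Theorem \ref{beta}. The key observation is that when $p\sim q$, Proposition \ref{dependentcase} asserts $\dim_{\rm H}\overline{\mathcal{O}_{T_p}(x)}=\dim_{\rm H}\overline{\mathcal{O}_{T_q}(x)}$ for \emph{every} $x\in[0,1)$. Consequently the two level sets coincide: for any $\alpha\in[0,1]$ one has $E_{T_p}(\alpha)=E_{T_q}(\alpha)$, since membership in either set is governed by exactly the same numerical condition on $x$. This single identity will do almost all the work.

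First I would dispose of the case $\alpha_1\neq\alpha_2$. If $x\in E_{T_p}(\alpha_1)\cap E_{T_q}(\alpha_2)$, then $\alpha_1=\dim_{\rm H}\overline{\mathcal{O}_{T_p}(x)}=\dim_{\rm H}\overline{\mathcal{O}_{T_q}(x)}=\alpha_2$ by Proposition \ref{dependentcase}, contradicting $\alpha_1\neq\alpha_2$. Hence $E_{T_p}(\alpha_1)\cap E_{T_q}(\alpha_2)=\emptyset$ and its Hausdorff dimension is $0$, as claimed (using the convention $\dim_{\rm H}\emptyset=0$).

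For the case $\alpha_1=\alpha_2=:\alpha$, the identity $E_{T_p}(\alpha)=E_{T_q}(\alpha)$ noted above gives $E_{T_p}(\alpha)\cap E_{T_q}(\alpha)=E_{T_p}(\alpha)$. Theorem \ref{beta}, applied with $\beta=p$, then yields
$\dim_{\rm H}\big(E_{T_p}(\alpha)\cap E_{T_q}(\alpha)\big)=\dim_{\rm H}E_{T_p}(\alpha)=\alpha$, which covers all $\alpha\in[0,1]$ and completes the proof.

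There is no real obstacle at this stage: the substantive difficulty has already been absorbed into the proof of Proposition \ref{dependentcase} (which passes through Proposition \ref{invariant} to identify Hausdorff and box dimension for $T_m$-invariant closed sets, where $m=p^a=q^b$) and into Theorem \ref{beta}. The only points meriting a line of care are the emptiness-convention in the unequal case and the observation that Proposition \ref{dependentcase} gives the level-set identity $E_{T_p}(\alpha)=E_{T_q}(\alpha)$ rather than merely a one-sided inclusion.
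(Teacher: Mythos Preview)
Your proof is correct and follows precisely the route the paper intends: the paper simply states that ``Proposition \ref{dependentcase} gives the following'' without writing out details, and your argument supplies exactly those details—using Proposition \ref{dependentcase} to get $E_{T_p}(\alpha)=E_{T_q}(\alpha)$, so the intersection is empty when $\alpha_1\neq\alpha_2$ and equals $E_{T_p}(\alpha)$ when $\alpha_1=\alpha_2$, with Theorem \ref{beta} giving the dimension in the latter case.
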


\begin{prop}
	Let $p, q\geq 2$ be integers with $p\nsim q$, and $\alpha_1, \alpha_2\in [0, 1]$.
$$\dim_{\rm H}E_{T_p}(\alpha_1)\cap E_{T_q}(\alpha_2)\leq\max\{\alpha_1+\alpha_2-1, 0\}.$$
\end{prop}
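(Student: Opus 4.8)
The plan is to obtain this as an immediate consequence of Proposition \ref{upperFurstenberglevel}. First I would record the elementary inclusion
\[
E_{T_p}(\alpha_1) \cap E_{T_q}(\alpha_2) \subset F_{p,q}^{\alpha_1 + \alpha_2},
\]
which holds because for any $x$ in the left-hand set we have $\dim_{\rm H}\overline{\mathcal{O}_{T_p}(x)} = \alpha_1$ and $\dim_{\rm H}\overline{\mathcal{O}_{T_q}(x)} = \alpha_2$, so that the sum of the two orbit-closure dimensions equals $\alpha_1 + \alpha_2$, i.e., $x \in F_{p,q}^{\alpha_1 + \alpha_2}$. Since $\alpha_1, \alpha_2 \in [0,1]$ we have $\alpha_1 + \alpha_2 \in [0,2]$, so Proposition \ref{upperFurstenberglevel} is applicable, and monotonicity of Hausdorff dimension under inclusion then yields
\[
\dim_{\rm H}\big(E_{T_p}(\alpha_1) \cap E_{T_q}(\alpha_2)\big) \le \dim_{\rm H} F_{p,q}^{\alpha_1 + \alpha_2} \le \max\{0,\, \alpha_1 + \alpha_2 - 1\},
\]
which is exactly the asserted bound.

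I do not expect any genuine obstacle here: the whole content is the trivial inclusion above, and the hard analytic input from \cite{Wu19} on intersections of scaled Cantor sets has already been packaged into Proposition \ref{upperFurstenberglevel}. If a self-contained argument were preferred, I would instead rerun the proof of Proposition \ref{upperFurstenberglevel} while keeping the two dimensions separate: for $x$ in the intersection, $\overline{\mathcal{O}_{T_p}(x)}$ and $\overline{\mathcal{O}_{T_q}(x)}$ are $T_p$- and $T_q$-invariant closed sets of dimensions $\alpha_1$ and $\alpha_2$; by \cite[Proposition 9.3]{Wu19} each can be enclosed, up to an arbitrarily small loss $\epsilon$, in a $p^k$-Cantor set (resp. a $q^l$-Cantor set), of which there are only countably many; \cite[Theorem 1.3]{Wu19} bounds the Hausdorff dimension of each such intersection by $\max\{0,\, \alpha_1 + \alpha_2 + 2\epsilon - 1\}$; and countable stability of Hausdorff dimension together with $\epsilon \to 0$ finishes the proof. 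Either route is a one-step deduction from the Furstenberg-type intersection estimate.
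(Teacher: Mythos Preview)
Your argument is correct and matches the paper's proof essentially verbatim: the paper also records the inclusion $E_{T_p}(\alpha_1)\cap E_{T_q}(\alpha_2)\subset F_{p,q}^{\alpha_1+\alpha_2}$ and invokes Proposition \ref{upperFurstenberglevel}. The alternative self-contained route you sketch is unnecessary but accurate as well.
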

\begin{proof}
	 Note that
	$$E_{T_p}(\alpha_1)\cap E_{T_q}(\alpha_2)\subset F_{p,q}^{\alpha_1+\alpha_2},$$
	the result follows from Proposition \ref{upperFurstenberglevel}.
\end{proof}

We conjecture that
\begin{con}
  Let $p, q\geq 2$ be integers with $p\nsim q$, and $\alpha_1, \alpha_2\in [0, 1]$. Then
  $$\dim_{\rm H}E_{T_p}(\alpha_1)\cap E_{T_q}(\alpha_2)=\max\{\alpha_1+\alpha_2-1, 0\}.$$
 \end{con}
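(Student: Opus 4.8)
Towards this conjecture, here is the approach we would take. Since the preceding proposition gives the matching upper bound $\dim_{\rm H}\big(E_{T_p}(\alpha_1)\cap E_{T_q}(\alpha_2)\big)\le\max\{0,\alpha_1+\alpha_2-1\}$ and Hausdorff dimension is always nonnegative, the conjecture reduces to the lower bound $\dim_{\rm H}\big(E_{T_p}(\alpha_1)\cap E_{T_q}(\alpha_2)\big)\ge\alpha_1+\alpha_2-1$ in the range $\alpha_1+\alpha_2>1$. If one exponent equals $1$, say $\alpha_2=1$ and $\alpha_1>0$ (the case $\alpha_1=0$ being trivial), this is already contained in the proof of Theorem \ref{Furstenbergset}(ii): pick a $T_p$-invariant closed set $A$ with $\dim_{\rm H}A=\alpha_1$, use Theorem \ref{GP}(1) to obtain a $T_p$-ergodic measure $\mu$ on $A$ with $\dim_{\rm H}\mu=\alpha_1$ (hence $h(\mu,T_p)>0$); for $\mu$-a.e.\ $x$ one has $\overline{\mathcal{O}_{T_p}(x)}=A$, while $p\nsim q$ and Host's theorem \cite{Host95} force $(q^nx)$ to equidistribute so that $\overline{\mathcal{O}_{T_q}(x)}=[0,1]$, and thus $\mu$ is carried by $E_{T_p}(\alpha_1)\cap E_{T_q}(1)$, giving $\dim_{\rm H}\big(E_{T_p}(\alpha_1)\cap E_{T_q}(1)\big)\ge\dim_{\rm H}\mu=\alpha_1$. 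So the real content is the regime $\alpha_1,\alpha_2\in(0,1)$.

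For $\alpha_1,\alpha_2\in(0,1)$ the previous mechanism is blocked: by Host's theorem no $T_p$- or $T_q$-ergodic measure of positive dimension has typical points with a proper orbit closure for the other map, and by Furstenberg's $\times p,\times q$ theorem \cite{Fur67} there is no closed set of intermediate dimension invariant under both maps. The plan is a two-base multi-scale construction. Fix a $T_p$-invariant digit-restricted (Moran-type) subset $A\subset[0,1)$ with $\dim_{\rm H}A=\alpha_1$ carrying a full-dimensional $T_p$-ergodic measure $\mu_p$, and similarly a $T_q$-invariant digit-restricted subset $B$ with $\dim_{\rm H}B=\alpha_2$ and $\mu_q$. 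One then builds a set $S$ of points $x\in A\cap B$ — so that automatically $\dim_{\rm H}\overline{\mathcal{O}_{T_p}(x)}\le\alpha_1$ and $\dim_{\rm H}\overline{\mathcal{O}_{T_q}(x)}\le\alpha_2$ — whose base-$p$ expansion moreover contains every $\mu_p$-generic base-$p$ block and whose base-$q$ expansion contains every $\mu_q$-generic base-$q$ block, these insertions being placed along a sequence of scales so sparse that they have zero density in each base. As in the constructions behind Theorems \ref{int-entropy} and \ref{intermediate-dimension}, sparseness leaves $\dim_{\rm H}\overline{\mathcal{O}_{T_p}(x)}=\alpha_1$ and $\dim_{\rm H}\overline{\mathcal{O}_{T_q}(x)}=\alpha_2$ unchanged, while the inserted blocks make the two orbits dense in $A$ and in $B$; hence $S\subseteq E_{T_p}(\alpha_1)\cap E_{T_q}(\alpha_2)$. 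Reconciling the base-$p$ and base-$q$ constraints on a single $x$ is itself a delicate point, since fixing digits in one base scrambles the digits in the other.

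The decisive step is then $\dim_{\rm H}S\ge\alpha_1+\alpha_2-1$, and because $S\subseteq A\cap B$ this cannot be cheaper than the inequality
\[
\dim_{\rm H}(A\cap B)\ \ge\ \dim_{\rm H}A+\dim_{\rm H}B-1
\]
for a $\times p$-invariant Cantor set $A$ and a $\times q$-invariant Cantor set $B$ with $p\nsim q$, whose reverse is exactly the Shmerkin--Wu bound \cite{Shm19,Wu19} invoked in Proposition \ref{upperFurstenberglevel}. I expect this to be the main obstacle: in contrast with Marstrand-type slicing for randomly translated sets, $A$ and $B$ are arithmetically rigid, carry no mutual transversality, and admit no common invariant measure, so a genuinely new idea — or a clever choice of the Cantor sets making $A\cap B$ amenable to direct analysis — seems to be required. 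This is precisely why the statement is at present only a conjecture: the scheme above yields just the endpoint cases $\alpha_1=1$ or $\alpha_2=1$, together with the trivial case $\alpha_1+\alpha_2\le1$ handled by the upper bound alone.
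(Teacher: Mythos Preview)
The statement is a \emph{conjecture} in the paper, not a theorem; the paper offers no proof and explicitly remarks after it that the natural tool (Wu's intersection theorem) does not apply because the sets $E_{T_p}(\alpha_1)$ and $E_{T_q}(\alpha_2)$ are dense rather than closed. Your write-up is therefore not to be compared with a paper proof --- there is none --- but it is an honest and accurate assessment of the situation.

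Your reduction is correct: the upper bound is the preceding proposition, and the endpoint case $\alpha_2=1$ (or $\alpha_1=1$) is exactly the argument the paper uses in the proof of Theorem~\ref{Furstenbergset}(ii), via a $T_p$-ergodic measure of maximal dimension on a closed invariant set together with Host's theorem. You also correctly isolate why the interior regime $\alpha_1,\alpha_2\in(0,1)$ is genuinely open: Host's theorem and Furstenberg's $\times p,\times q$ rigidity kill the measure-theoretic shortcut, and any construction of the type you sketch must in particular produce a lower bound $\dim_{\rm H}(A\cap B)\ge \dim_{\rm H}A+\dim_{\rm H}B-1$ for suitable $\times p$- and $\times q$-Cantor sets, a statement which is \emph{not} known in general (and can fail for particular choices of $A,B$). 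So your proposal is not a proof and does not pretend to be one; it identifies the right obstruction, which is all one can presently say.
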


We remark that $E_{T_p}(\alpha_1)$ is $T_p$-invariant but not closed, in fact, it is even dense in $[0, 1]$. Similarly, $E_{T_q}(\alpha_2)$ is a $T_q$-invariant dense subset in $[0 ,1]$. Thus, \cite[Theorem 1.3]{Wu19} cannot be applied to solve the above conjecture.

\section*{Acknowledgements} We thank Aihua Fan and Shilei Fan for helpful discussions, and thank De-Jun Feng for sharing \cite{Feng} with us. The first author was supported by by NSFC 11901204 and 12271418. The second author was supported NSFC12271176 and Guangdong Natural Science Foundation 2024A1515010946. The first and the third author were supported by the Academy of Finland, project grant No. 318217.

\end{document}